\documentclass[pdflatex,sn-mathphys-num]{sn-jnl}
\usepackage{graphicx}%
\usepackage{multirow}%
\usepackage{amsmath,amssymb,amsfonts}%
\usepackage{amsthm}%
\usepackage{mathrsfs}%
\usepackage[title]{appendix}%
\usepackage{xcolor}%
\usepackage{textcomp}%
\usepackage{manyfoot}%
\usepackage{booktabs}%
\usepackage{algorithm}%
\usepackage{algorithmicx}%
\usepackage{algpseudocode}%
\usepackage{listings}%

\theoremstyle{thmstyleone}
\newtheorem{theorem}{Theorem}[section]
\newtheorem{lemma}[theorem]{Lemma}
\newtheorem{proposition}[theorem]{Proposition}

\theoremstyle{thmstylethree}
\newtheorem{definition}[theorem]{Definition}
\theoremstyle{thmstyletwo}
\newtheorem{remark}[theorem]{Remark}
\numberwithin{equation}{section}
\newcommand{\R}{\mathbb R}
\newcommand{\C}{\mathbb C}
\newcommand{\ip}[2]{\left(#1,#2\right)}
\newcommand{\norm}[1]{\left\lVert#1\right\rVert}
\allowdisplaybreaks[4]
\begin{document}

\title[Normalized solutions for a nonlinear Dirac equation]{Normalized solutions and small mass asymptotics for a nonlinear Dirac equation with a singular weight and competing powers}

\author[1]{\fnm{Hao} \sur{Wang}}\email{wanghao168@nenu.edu.cn}
\author*[2]{\fnm{Xiaojun} \sur{Chang}}\email{changxj100@nenu.edu.cn}

\affil[1]{\orgdiv{School of Mathematics and Statistics}, \orgname{Northeast Normal University}, \orgaddress{\city{Changchun}, \postcode{130024}, \state{Jilin}, \country{PR China}}}
\affil*[2]{\orgdiv{School of Mathematics and Statistics \& Center for Mathematics and Interdisciplinary Sciences}, \orgname{Northeast Normal University}, \orgaddress{\city{Changchun}, \postcode{130024}, \state{Jilin}, \country{PR China}}}

\abstract{We study normalized solutions of the nonlinear Dirac equation
\[
\begin{cases}
-i\alpha\cdot\nabla u+m\beta u=\mu u+|x|^{-b}\bigl(|u|^{q-2}-|u|^{p-2}\bigr)u&\text{in }\mathbb{R}^3,\\
\displaystyle\int_{\mathbb{R}^3}|u|^2\,dx=a,
\end{cases}
\]
where $m>0$, $0<b<1$, $2<q<3-b$, and $q<p<\infty$. For each fixed $p>q$ and all sufficiently small $a>0$, we prove the existence of a normalized solution $(u_a,\mu_a)$ with $u_a\in H^1(\mathbb{R}^3;\mathbb C^4)\cap L^\infty(\mathbb{R}^3;\mathbb C^4)$ and  $\mu_a\in(0,m)$. This covers the weighted Sobolev critical and supercritical ranges $p\ge3-b$. Our method combines a mass-preserving spectral reduction with a convex truncation and uniform $L^\infty$ estimates. For solutions arising from reduced minimization,  we determine the energy asymptotics and the rate at which $\mu_a$ approaches $m$ as $a\to0^+$. After rescaling, the upper two-component spinors converge in $H^{\frac12}(\mathbb{R}^3;\mathbb C^2)$ to the set of normalized ground states of a weighted nonlinear Schr\"odinger equation with exponent $q$, while the lower two-component spinors converge to zero in the same space. We also derive an asymptotic formula in $L^2(\mathbb{R}^3;\mathbb C^2)$ for the rescaled lower two-component spinor in terms of the gradient of the limiting Schr\"odinger ground state.}

\keywords{Nonlinear Dirac equation, normalized solutions, singular weight, competing powers, convex truncation, weighted Sobolev criticality, small mass asymptotics}
\pacs[2020 Mathematics Subject Classification]{35Q41, 35A15, 35B40, 35J60, 49J35}

\maketitle

\section{Introduction} In this paper, we deal with the normalized nonlinear Dirac problem
\begin{equation}\label{eq:main}
\begin{cases}
-i\alpha\cdot\nabla u+m\beta u=\mu u+|x|^{-b}\bigl(|u|^{q-2}-|u|^{p-2}\bigr)u&\text{in }\R^3,\\
\displaystyle\int_{\R^3}|u|^2\,dx=a,
\end{cases}
\end{equation}
where $u:\R^3\to\C^4$, $m>0$ is the mass of the Dirac particle, and $\alpha_1,\alpha_2,\alpha_3,\beta$ are the standard Dirac matrices. The $L^2$-mass $a>0$, also called the charge, is prescribed, while the frequency $\mu\in\R$ arises as a Lagrange multiplier. Throughout the paper, we assume that
\[
0<b<1,\quad 2<q<3-b,\quad q<p<\infty.
\]

The mass constraint has a natural interpretation in terms of standing
waves. Writing $H_0=-i\alpha\cdot\nabla+m\beta$, the associated evolution
equation is
\[
i\partial_t\psi=H_0\psi-|x|^{-b}
(|\psi|^{q-2}-|\psi|^{p-2})\psi.\]
It formally conserves the $L^2$ charge, and a standing wave
$\psi(t,x)=e^{-i\mu t}u(x)$ satisfies the stationary equation in
~\eqref{eq:main}. Prescribing the charge therefore leads to the second condition in~\eqref{eq:main}.

The nonlinear terms in \eqref{eq:main} represent attraction at the
lower power and repulsion at the higher power. For the Dirac
operator, this repulsion does not remove the strong indefiniteness
of the energy. Moreover, the weighted Sobolev threshold is $3-b$:
at this exponent compactness fails, and above it the energy need
not be finite on $H^{1/2}(\mathbb{R}^3;\mathbb{C}^4)$.
We seek normalized solutions for every fixed $p>q$ and all
sufficiently small masses, and determine how the two powers
contribute to their leading behavior as $a\to0^+$.

Variational methods for fixed-frequency nonlinear Dirac equations
were developed by Esteban and S\'er\'e~\cite{Esteban-1995};
see also~\cite{Esteban-1996,Bartsch-2006,Ding-2008,Ding-2012,Ding-2015,Figueiredo-2017,Wang-2021}.
For normalized solutions of nonlinear Dirac equations, the strong indefiniteness of the energy must be addressed simultaneously with the 
$L^2$-constraint.
An early constrained approach was developed by Buffoni and
Jeanjean~\cite{Buffoni-1993} for semilinear elliptic equations
with a spectral gap, using spectral reduction on $L^2$ spheres
and minimax arguments.
Buffoni, Esteban, and S\'er\'e~\cite{Buffoni-2006} subsequently
obtained normalized solutions for a class of strongly indefinite
semilinear elliptic equations by combining penalization,
variational Lyapunov--Schmidt reduction, and a mountain-pass
argument. Their approach drew on methods developed for the
Dirac--Fock equations in~\cite{Esteban-1999}.

Constrained reduction and minimax methods have since been used
for the Maxwell--Dirac system~\cite{Nolasco-2021}, the
Klein--Gordon--Dirac system~\cite{CotiZelati-2023}, and local
nonlinear Dirac equations~\cite{CotiZelati-2025}.
For spatially dependent nonlinearities, Ding, Yu, and
Zhao~\cite{Ding-2023} treated a power with a coefficient vanishing
at infinity, while Bahrouni et al.~\cite{Bahrouni-2023}
considered more general nonautonomous problems.

Competing powers have also been studied in normalized
Schr\"odinger equations. Soave~\cite{Soave-2020,Soave-2020-JFA}
considered a focusing higher power and a lower power of either
sign. With a defocusing lower power, Bellazzini, Forcella, and
Georgiev~\cite{BellazziniForcellaGeorgiev-2023} studied
ground-state energy thresholds and blow-up in three dimensions.
The reverse ordering, relevant to \eqref{eq:main}, occurs in the
cubic--quintic equation, for which Jeanjean and
Lu~\cite{JeanjeanLu-2022-M3AS} constructed positive energy
normalized solutions of local minimum and mountain-pass type
in suitable mass ranges.

For spatially inhomogeneous Schr\"odinger equations,
existence, stability, and dynamical properties of single-power
models, including fractional ones, have been studied
in~\cite{Genoud-2008,Farah-2016,Saanouni-2024}.
For combined weighted powers, Gou~\cite{Gou-2024} established
existence results for standing waves with prescribed $L^2$ norm.
Gou, Majdoub, and Saanouni~\cite{Gou-2026} investigated ground
states, blow-up, and scattering for competing inhomogeneous
nonlinearities with a focusing leading term and a defocusing
perturbation.

 For the Dirac equation, our previous
work~\cite{WangChang2026} established small mass existence and
weighted compactness for a single power nonlinear term of the form 
$|x|^{-b}|u|^{r-2}u$, where $0<b<1$ and
$2<r<3-b$.
The present paper treats a competing defocusing power with
arbitrary fixed exponent $p>q$, including $p\ge3-b$.
Our additional ingredient is a convex truncation together with
uniform amplitude estimates for the resulting solutions.
These estimates recover the original equation even when the
weighted $p$-power energy is not finite on the whole energy space.
Theorem~\ref{thm:small-mass} then identifies the leading energy,
frequency, and spinor asymptotics of the solutions selected by
reduced minimization.

The asymptotic results are related to nonrelativistic limits of
normalized Dirac equations.
Chen et al.~\cite{Chen-2024} obtained convergence to normalized
Schr\"odinger ground states in $H^1$ under regularity assumptions
on the spatial coefficients. In particular, their local coefficient
belongs to $C^1(\R^3)$, whereas the coefficient $|x|^{-b}$ in
\eqref{eq:main} is singular at the origin.
For an unweighted power nonlinearity, Chen, Ding, and
Guo~\cite{Chen-2025} studied energy and action ground states,
quantified the energy convergence, and obtained separate estimates
for the spectral components and the upper and lower two-component
spinors.  Further results for multiple normalized
bound states and for equations on metric graphs appear
in~\cite{ChenGuoYu-2026,Borrelli-2019,ChenGuoGraph2026}.

In the present problem, the Dirac operator is fixed and
$a\to0^+$.
After normalization and spatial rescaling, this becomes a
nonrelativistic limit in which the singular weight remains
and the coefficient of the higher-power term tends to zero.
Although the natural Dirac energy space has order $1/2$,
the positive spectral dispersion satisfies
\[
\sqrt{m^2+|\xi|^2}-m
=\frac{|\xi|^2}{\sqrt{m^2+|\xi|^2}+m}
\sim\frac{|\xi|^2}{2m}
\qquad\text{as }|\xi|\to0.
\]
The limiting problem therefore has the classical
Schr\"odinger kinetic energy.
We identify the precise energy and frequency coefficients
through its normalized ground states.
The profile convergence is proved in $H^{1/2}$, and an
additional amplitude rescaling of the lower two-component
spinor gives a strong $L^2$ limit expressed through the
gradient of a limiting ground state.
The proof combines sharp reduced energy estimates with
Fourier tail control and does not assume an $H^1$ bound
for the rescaled profiles.

We briefly describe the variational construction. The free operator has
\[
\sigma(H_0)=(-\infty,-m]\cup[m,\infty),\qquad
E=D(|H_0|^{\frac12})=H^{\frac12}(\R^3;\C^4)=E^+\oplus E^-.
\]
Its quadratic form is unbounded above and below on the $L^2$ sphere.
With
\[
F(\sigma)=\frac{\sigma^q}{q}-\frac{\sigma^p}{p},\qquad \sigma\ge0,
\]
the original energy is
\begin{equation}\label{eq:energy-introduction}
I(u)=\frac12\bigl(\|u^+\|_E^2-\|u^-\|_E^2\bigr)
-\int_{\R^3}|x|^{-b}F(|u|)\,dx,
\end{equation}
defined for $u\in E$ with
$\int_{\R^3}|x|^{-b}|u|^p\,dx<\infty$.
The primitive loses convexity at large amplitudes.
Moreover, the weighted embedding at $p=3-b$ is not compact,
and for $p>3-b$ the energy need not be finite on all of $E$.

To overcome these obstacles, we replace $F$ at large
amplitudes by a nonnegative convex $C^2$ function $F_\delta$
that agrees with $F$ near zero and has global $q$-growth.
Related truncation and penalization arguments appear
in~\cite{JeanjeanTanaka-2004,Alves-2025}.
The derivative bounds for $F_\delta$ and the smallness of
the mass yield uniform strict concavity on the negative
spectral fibers, a unique fiber maximizer, and a $C^1$
reduced functional.
The compact weighted embedding at exponent $q<3-b$ provides
the compactness required for reduced minimization.
An $L^r$ bootstrap based on $H_0^{-1}$ then gives
\[
\|u_a\|_{H^1}+\|u_a\|_\infty\le C\sqrt a,
\]
with $C$ independent of $a$.
Thus the truncation is inactive for sufficiently small mass,
and the constructed solutions solve \eqref{eq:main}.
Our first result is the following existence theorem.

\begin{theorem}\label{thm:main}
Let $m>0$, $0<b<1$, $2<q<3-b$, and $q<p<\infty$. Then there exists $a^*=a^*(m,b,q,p)>0$ such that, for every $a\in(0,a^*)$, problem~\eqref{eq:main} admits a normalized solution
\[
(u_a,\mu_a)\in
\bigl(H^1(\R^3;\C^4)\cap L^\infty(\R^3;\C^4)\bigr)\times(0,m).
\]
Moreover, there exists a constant $C=C(m,b,q,p)>0$, independent of $a$, such that, for every $a\in(0,a^*)$,
\[
\norm{u_a}_{H^1}+\norm{u_a}_\infty\le C\sqrt a,\quad 0<I(u_a)<\frac{ma}{2},
\]
where $I$ is the original, untruncated energy defined in~\eqref{eq:energy-introduction}.
\end{theorem}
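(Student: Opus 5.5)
We follow the route sketched in the introduction, with four steps: truncate, reduce, minimize, then remove the truncation.

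\emph{Truncation.} Fix $\delta>0$ small and take $F_\delta\in C^2([0,\infty))$ nonnegative and convex, with $F_\delta=F$ on $[0,\delta]$ and $F_\delta''(\sigma)\le C\sigma^{q-2}$ for all $\sigma$, so that $F_\delta$ has global $q$-growth. The truncated energy is
\[
I_\delta(u)=\tfrac12\bigl(\|u^+\|_E^2-\|u^-\|_E^2\bigr)-\int|x|^{-b}F_\delta(|u|)\,dx .
\]
It is $C^1$ on all of $E$, by the weighted Hardy--Sobolev embedding $E\hookrightarrow L^q(|x|^{-b}dx)$, which is compact locally and in fact globally, since $q<3-b$ and the weight decays at infinity.

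\emph{Mass-preserving reduction.} On $S_a=\{\|u\|_2^2=a\}$ write $u=w+v$ with $w\in E^+$ and $v\in E^-$. As in Buffoni--Esteban--S\'er\'e, parametrize the fiber over $w$ with $\|w\|_2^2<a$ by normalized combinations, or equivalently maximize $I_\delta$ over $v\in E^-$ subject to the mass constraint. The key estimate is uniform strict concavity along the fiber. The term $-\tfrac12\|v\|_E^2\le-\tfrac m2\|v\|_2^2$ dominates, because the second variation of $\int|x|^{-b}F_\delta$ is bounded by $C\int|x|^{-b}|u|^{q-2}|h|^2\le C\|u\|_E^{q-2}\|h\|_E^2$, and this is small once $a$ and the relevant $E$-norms are small. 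One first shows that minimizing sequences have $\|u\|_E^2\lesssim a$. Convexity of $F_\delta$ gives a unique maximizer $h(w)$, and the implicit function theorem makes the reduced functional $J(w)=I_\delta(w+h(w))$ a $C^1$ function on the positive part of the sphere.

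\emph{Minimization.} Minimize $J$ over $\{w\in E^+:\|w\|_2^2=a\}$. For the upper bound, test with a rescaled Schr\"odinger profile $w_\lambda$ concentrated at frequency $\sim\sqrt a$. Since $\sqrt{m^2+|\xi|^2}-m\sim|\xi|^2/2m$ and the $q$-term is mass-subcritical, because $q<3-b<\frac{10-2b}{3}$, this gives $\inf J<\tfrac{ma}{2}$. For the lower bound, $J\ge \tfrac{ma}{2}-o(a)$, which yields $\inf J>0$. Compactness follows from the strict subadditivity $e(a)<e(\theta)+e(a-\theta)$, inherited from the $L^2$-subcritical concave-type behaviour of the $q$-term, together with the compact weighted embedding. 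The weight $|x|^{-b}$ breaks translation invariance, so vanishing or escape to infinity makes the nonlinear term vanish and pushes the energy up to at least $\tfrac{ma}{2}$, which contradicts the upper bound. The minimizer $u_a=w+h(w)$ solves the truncated equation with a multiplier $\mu_a$. Testing the equation against $u_a^\pm$ and using $I<\tfrac{ma}{2}$ and $F_\delta'\sigma\ge F_\delta$-type inequalities gives $\mu_a<m$. Since $\mu_a a=\|u^+\|^2-\|u^-\|^2-\int|x|^{-b}F_\delta'(|u|)|u|$ and the nonlinear part is $o(a)$, we also get $\mu_a>0$ for small $a$.

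\emph{Removing the truncation (main obstacle).} This is the bootstrap $\|u_a\|_{H^1}+\|u_a\|_\infty\le C\sqrt a$ with $C$ independent of $\delta$-activity. Write $u=(H_0-\mu)^{-1}\bigl(|x|^{-b}f_\delta(|u|)u/|u|\bigr)$. This needs $m-\mu_a$ bounded below, or at least not too small. Since $m-\mu_a\sim a^{\theta}$ may degenerate, we instead write $u=H_0^{-1}\bigl(\mu u+|x|^{-b}g\bigr)$ and use that $H_0^{-1}$ maps $L^r\to W^{1,r}$. Starting from $u\in L^2\cap L^3$ with norm $\lesssim\sqrt a$, we iterate. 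The term $|x|^{-b}|u|^{q-1}$ lies in $L^s$ for $s$ slightly above the previous exponent, because $b<1$ and $|x|^{-b}\in L^{t}_{loc}$ for $t<3/b$. Finitely many steps reach $W^{1,r}$ with $r>3$, hence $L^\infty$, and also $H^1$. Homogeneity keeps each step linear in $\sqrt a$, since $|u|^{q-2}$ carries the small factor. Once $\|u_a\|_\infty\le C\sqrt a<\delta$, we have $F_\delta=F$ along $u_a$, so $u_a$ solves the original equation and $I(u_a)=I_\delta(u_a)\in(0,\tfrac{ma}{2})$. The delicate point, which we would check carefully, is that the singular weight does not stall the bootstrap near the origin. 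Here we use the integrability margin $b<1$, which leaves room in the Hölder exponents at each step.
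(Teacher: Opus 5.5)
Your truncation, the Buffoni--Jeanjean fiber reduction, and the removal of the truncation by the $L^r$ bootstrap with $H_0^{-1}$ follow the paper. Using $H_0^{-1}$ rather than $(H_0-\mu)^{-1}$, precisely to avoid the degenerating gap $m-\mu_a$, is also what the paper does.

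\textbf{Compactness.} This is where you diverge, and the step is only asserted. The paper uses no subadditivity. It applies Ekeland's principle on $X_a=S_a^+\cap\overline{\mathcal V}^+_{3\Theta_a/4}$. The reduction is only defined on the cone $\norm{v}_E^2<2(m+1)\norm{v}_2^2$, and Lemmas~\ref{lem:annulus}--\ref{lem:energy-level} show that the infimum lies strictly below the values near the edge of this set. That is the precise form of your ``minimizing sequences have $\norm{u}_E^2\lesssim a$''. The paper then:
\begin{itemize}
\item lifts the sequence by Lemma~\ref{lem:lifting} to $\mathcal H_\delta(u_n)\to0$ in $E^*$;
\item shows $m-Ca^{\frac{q-2}{2}}\le\liminf\mu_n\le\limsup\mu_n\le 2e_a^\delta/a<m$;
\item notes that the nonlinear term converges in $E^*$ by the compact weighted embedding, so $(H_0-\mu_a^\delta)(u_n-u_a^\delta)\to0$ in $E^*$;
\item tests with $(u_n-u_a^\delta)^\pm$ and gets strong convergence because $0<\mu_a^\delta<m$.
\end{itemize}
So $\mu<m$ is both a conclusion and the source of compactness.

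Your concentration-compactness route needs two inputs. Strict subadditivity is the easy one. Since $h(\sqrt s\,v,\sqrt s\,w)=\sqrt s\,h(v,w)$ and $\mathcal B(\sqrt s\,v)=\sqrt s\,\mathcal B(v)$, the inequality $\sigma F_\delta'\ge2F_\delta$ gives $\mathcal N_\delta(\sqrt s\,y)\ge s\,\mathcal N_\delta(y)$. Hence $e^\delta_{s\theta}-\frac{ms\theta}{2}\le s\bigl(e^\delta_\theta-\frac{m\theta}{2}\bigr)$ for $s\ge1$.

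The hard input, which you do not address, is the splitting inequality $\liminf_n J_\delta(v_n)\ge e^\delta_\theta+\frac m2(a-\theta)$ for $v_n\rightharpoonup v$ with $\norm{v}_2^2=\theta<a$. Three things obstruct it:
\begin{itemize}
\item the term $-\frac12\norm{w_\delta(v_n)}_E^2$ has the wrong semicontinuity;
\item the fiber maximizer of $v_n$ bears no direct relation to the mass-$\theta$ fiber over $v$;
\item $v$ need not lie in the cone where the mass-$\theta$ reduction is defined, since $v_n\in X_a$ only gives $\norm{v}_E^2\le\frac32(m+1)a-m(a-\theta)$, which is large compared with $\theta$ when $\theta\ll a$.
\end{itemize}
These can presumably be handled with quantitative bounds like those of Section~\ref{sec:small-mass}. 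The Palais--Smale plus spectral-gap argument avoids them entirely.

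\textbf{The truncation is missing an inequality.} Testing the equation against $u_a$ gives
\[
\mu_a a=2I_\delta(u_a)-\int_{\R^3}|x|^{-b}\bigl(|u_a|F_\delta'(|u_a|)-2F_\delta(|u_a|)\bigr)\,dx .
\]
So $\mu_a<m$ follows from $I_\delta(u_a)<\frac{ma}{2}$ only if $\sigma F_\delta'(\sigma)\ge2F_\delta(\sigma)$ for all $\sigma\ge0$. Convexity and $F_\delta(0)=0$ give only $\sigma F_\delta'\ge F_\delta$. The properties you list (nonnegative, convex, $F_\delta''\le C\sigma^{q-2}$, $q$-growth at infinity) do not imply the stronger inequality. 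A convex extension that is nearly affine on a long interval extending beyond $2\delta$ violates it there.

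The paper builds the inequality in. The choice $\delta^{p-q}<\frac{q-2}{p-2}$ gives it on $[0,\delta]$. For $\sigma\ge\delta$, the extension $A_\delta\sigma^q+B_\delta\sigma^2+\Gamma_\delta$ with $A_\delta>0>\Gamma_\delta$ gives $\sigma F_\delta'-2F_\delta=(q-2)A_\delta\sigma^q-2\Gamma_\delta>0$. Your subadditivity argument needs the same inequality.

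\textbf{Two smaller points.} First, uniqueness of the fiber maximizer comes from the uniform strict concavity forced by the smallness of $\ell^{q-2}$, not from convexity of $F_\delta$, because $h(v,\cdot)$ is not affine. Second, the test profile must be dilated at a scale $\varepsilon$ chosen small for fixed $a$ (or at $\varepsilon_a$), not at $\varepsilon=\sqrt a$. At $\varepsilon=\sqrt a$ the gain $a^{\frac q2}\varepsilon^{\gamma_q}$ is $o(a\varepsilon^2)$ whenever $q+\gamma_q>4$, which happens for $b<\frac13$ and $q$ close to $3-b$.
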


We now state the small mass asymptotics for families selected by
reduced minimization. The scale is fixed by the balance between
the kinetic correction and the lower-power term. Set
\[
\gamma_q:=\frac{3(q-2)}2+b,\qquad
\varepsilon_a:=a^{\frac{q-2}{2(2-\gamma_q)}}
=a^{\frac{q-2}{10-2b-3q}}.
\]
Since $0<\gamma_q<2$, we have $\varepsilon_a\to0^+$ and
\[
a^{\frac q2}\varepsilon_a^{\gamma_q}=a\varepsilon_a^2.
\]
The limiting variational problem is defined on
\[
\mathcal S_q:=\{Q\in H^1(\R^3;\C^2):\|Q\|_2^2=1\}
\]
by the energy
\begin{equation}\label{eq:NLS-energy-introduction}
\mathscr E_q(Q)=\frac1{4m}\int_{\R^3}|\nabla Q|^2\,dx
-\frac1q\int_{\R^3}|x|^{-b}|Q|^q\,dx
\end{equation}
and the level and minimizer set
\begin{equation}\label{eq:NLS-ground-set-introduction}
e_q:=\inf_{Q\in\mathcal S_q}\mathscr E_q(Q),\qquad
\mathcal G_q:=\{Q\in\mathcal S_q:\mathscr E_q(Q)=e_q\}.
\end{equation}
We also set
\begin{equation}\label{eq:Lambda-limit-introduction}
\Lambda_q:=-\frac{2(q-\gamma_q)}{2-\gamma_q}e_q
\end{equation}
and
\[
\mathcal G_q^{\mathrm{up}}
:=\left\{\begin{pmatrix}Q\\0\end{pmatrix}:Q\in\mathcal G_q\right\}
\subset H^{\frac12}(\R^3;\C^4).
\]

\begin{theorem}\label{thm:small-mass} Assume the hypotheses of Theorem~\ref{thm:main}.
Fix an admissible truncation parameter $\delta$ as in
Section~\ref{sec:prelim}. With the notation introduced in
Sections~\ref{sec:reduction}--\ref{sec:truncated},
choose, for each sufficiently small $a>0$, any
$v_a\in X_a$ satisfying $J_\delta(v_a)=e_a^\delta$, and set
\[
u_a=G_\delta(v_a),\quad
\mu_a=\kappa_\delta(u_a).
\]
Keep $p$ and $\delta$ fixed as $a\to0^+$.
 Then $e_q<0$, $\Lambda_q>0$, and $\mathcal G_q$ is nonempty and compact in $H^1(\R^3;\C^2)$. Every $Q\in\mathcal G_q$ is a weak solution of
\begin{equation}\label{eq:NLS-limit-introduction}
\begin{cases}
-\dfrac1{2m}\Delta Q+\Lambda_q Q=|x|^{-b}|Q|^{q-2}Q&\text{in }\R^3,\\
\displaystyle\int_{\R^3}|Q|^2\,dx=1.
\end{cases}
\end{equation}
Every family obtained in this way satisfies
\begin{equation}\label{eq:main-energy-asymptotic}
I(u_a)=\frac{ma}{2}+a\varepsilon_a^2e_q+o(a\varepsilon_a^2),\quad
\frac{m-\mu_a}{\varepsilon_a^2}\rightarrow\Lambda_q\quad\text{as }a\to0.
\end{equation}
Define
\begin{equation}\label{eq:main-rescaling}
\widetilde u_a(x):=a^{-\frac12}\varepsilon_a^{-\frac32} u_a\left(\frac{x}{\varepsilon_a}\right).
\end{equation}
Then
\begin{equation}\label{eq:main-profile-convergence}
\operatorname{dist}_{H^{\frac12}(\R^3;\C^4)}\bigl(\widetilde u_a,\mathcal G_q^{\mathrm{up}}\bigr)\rightarrow0
\quad\text{as }a\to0.
\end{equation}
Every sequence $a_n\to0^+$ admits a subsequence, still denoted by $a_n$, and a function $Q\in\mathcal G_q$ such that
\[
\widetilde u_{a_n}\rightarrow
\begin{pmatrix}
Q\\
0
\end{pmatrix}
\quad\text{strongly in }H^{\frac12}(\R^3;\C^4).
\]
Along the same subsequence, write $u_{a_n}$ in the standard $2+2$ form as
\[
u_{a_n}=
\begin{pmatrix}
\varphi_{a_n}\\
\chi_{a_n}
\end{pmatrix},
\quad
\varphi_{a_n},\chi_{a_n}\in H^1(\R^3;\C^2).
\]
Then the lower two-component spinor satisfies
\begin{equation}\label{eq:main-lower-spinor-limit}
a_n^{-\frac12}\varepsilon_{a_n}^{-\frac52}\chi_{a_n}\left(\frac{\,\cdot\,}{\varepsilon_{a_n}}\right)
\rightarrow-\frac{i}{2m}\boldsymbol{\sigma}\cdot\nabla Q\quad\text{strongly in }L^2(\R^3;\C^2),
\end{equation}
where $\boldsymbol{\sigma}=(\sigma_1,\sigma_2,\sigma_3)$ is the vector of Pauli matrices and
$\boldsymbol{\sigma}\cdot\nabla Q:=\sum\limits_{j=1}^3\sigma_j\partial_jQ$. The unscaled solutions satisfy
\begin{equation}\label{eq:main-unscaled-limit}
\norm{u_a}_{H^1}+\norm{u_a}_\infty\rightarrow0,\quad\frac{u_a}{\sqrt a}\rightharpoonup0\quad\text{in }H^1(\R^3;\C^4).
\end{equation}
\end{theorem}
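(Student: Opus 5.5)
The argument has four stages: matching upper and lower bounds on the reduced energy, compactness of the rescaled profiles via the limiting Schrödinger problem, identification of the frequency from the Euler--Lagrange equation, and recovery of the lower spinor from the second block equation. Throughout, the uniform bound $\norm{u_a}_{H^1}+\norm{u_a}_\infty\le C\sqrt a$ from Theorem~\ref{thm:main} makes the truncation inactive. Hence $J_\delta(v_a)=I(u_a)$, and $u_a$ solves \eqref{eq:main} with $\mu_a=\kappa_\delta(u_a)$.

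\textbf{Limiting problem.} Since $\gamma_q<2$, the Gagliardo--Nirenberg inequality with the weight $|x|^{-b}$ (Hardy-type interpolation) gives coercivity of $\mathscr E_q$ on $\mathcal S_q$. Testing with $\lambda^{3/2}Q(\lambda x)$ and letting $\lambda\to0$ gives $e_q<0$. For existence and compactness of $\mathcal G_q$, I will take a minimizing sequence and use the compact embedding $H^1\hookrightarrow L^q(|x|^{-b}dx)$, which holds because the weight decays at infinity and $q<6-2b$. This gives weak limits with the same weighted term, and strict subadditivity is not needed. Since $e_q<0$ the weak limit is nonzero, and scaling of the mass excludes $\|Q\|_2<1$. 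The Euler--Lagrange equation produces a multiplier $\Lambda$, and combining the Nehari identity with the Pohozaev identity (scaling) gives $\Lambda=\Lambda_q$.

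\textbf{Energy asymptotics.} For the upper bound, take $Q\in\mathcal G_q$ and the trial state $w=\sqrt a\,\varepsilon^{3/2}(Q(\varepsilon x),0)^T$. Project it onto $E^+$ and renormalize the mass to get an element of $X_a$, which is possible by the mass-preserving reduction. Then $J_\delta\le$ the energy of the fiber maximizer. Using the dispersion estimate $\sqrt{m^2+|\xi|^2}-m\le|\xi|^2/2m$, the scaling identity $a^{q/2}\varepsilon_a^{\gamma_q}=a\varepsilon_a^2$, the fact that the $E^-$ correction from the fiber maximization is of order $a\varepsilon_a^{2}\cdot o(1)$, and the fact that the $p$-term is of higher order $a^{p/2}\varepsilon_a^{\gamma_p}=o(a\varepsilon_a^2)$, this gives $e_a^\delta\le ma/2+a\varepsilon_a^2e_q+o(a\varepsilon_a^2)$. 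For the lower bound, rescale $u_a$ to $\widetilde u_a$. By the upper bound the rescaled kinetic excess $\varepsilon_a^{-2}\int(\sqrt{m^2+\varepsilon_a^2|\xi|^2}-m)|\hat{\widetilde u}^+|^2$ is bounded, and the $E^-$ part is $o(1)$ by strict concavity of the fiber. I will then split frequencies into $|\xi|\le R/\varepsilon_a$, where the symbol is comparable to $|\xi|^2/2m$, and the tail. On the tail the symbol grows linearly, which gives $H^{1/2}$ control and a vanishing tail contribution. The rescaled upper spinor projected to low frequencies is then bounded in $H^1$. Passing to weak limits and using compactness of the weighted $q$-term gives $\liminf \varepsilon_a^{-2}(I(u_a)/a-m/2)\ge \mathscr E_q(Q)\ge e_q$, where $\mathscr E_q$ evaluated at the upper spinor uses $\frac1{4m}=\frac12\cdot\frac1{2m}$ from the factor $\frac12$ in $I$. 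Equality forces $Q\in\mathcal G_q$, strong $L^2$ convergence (no mass loss, since $e_q$ scales negatively), and convergence of the kinetic terms. That in turn gives strong $H^{1/2}$ convergence and vanishing of the lower spinor, which is \eqref{eq:main-profile-convergence}.

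\textbf{Frequency, lower spinor, and unscaled limits.} Pair the equation with $u_a$ to get $\mu_a a=\langle H_0u_a,u_a\rangle-\int|x|^{-b}(|u_a|^q-|u_a|^p)$. Expanding in the rescaled variables and using the limit identity for $Q$ yields $(m-\mu_a)/\varepsilon_a^2\to\Lambda_q$. For the lower spinor, the second block row of the equation reads $-i\boldsymbol\sigma\cdot\nabla\varphi-(m+\mu_a)\chi=|x|^{-b}(\cdots)\chi$. In rescaled form this gives $\chi$ scaled by $a^{-1/2}\varepsilon^{-5/2}$ equal to $(m+\mu_a+\text{small})^{-1}(-i\boldsymbol\sigma\cdot\nabla\widetilde\varphi)$. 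The nonlinear factor is $O(\|u_a\|_\infty^{q-2})$ times a singular weight rescaled by $\varepsilon^{-2}$, and I will check that it is small using $\gamma_q<2$ and Hardy's inequality. What remains is to show that $\nabla\widetilde\varphi_a\to\nabla Q$ in $L^2$, which is the \emph{main obstacle} because no $H^1$ bound is assumed. I would obtain it by testing the rescaled upper equation, an effective Schrödinger equation derived from the block system, against $\widetilde\varphi$: this yields convergence of $\frac1{2m}\|\nabla\widetilde\varphi\|_2^2$ to $\frac1{2m}\|\nabla Q\|_2^2$ via the multiplier and weighted-term limits, and combining this with weak convergence gives strong convergence. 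Finally, \eqref{eq:main-unscaled-limit} follows from $C\sqrt a\to0$, and the weak convergence holds because $u_a/\sqrt a$ is bounded in $H^1$ and is a dilation by $\varepsilon_a\to0$ of a convergent profile, so it spreads out and tends weakly to zero.
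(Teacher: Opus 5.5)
Your outline is correct. For the limiting problem, the two-sided energy estimate, the compactness of the rescaled profiles and the frequency limit, it is essentially the paper's argument. The paper works in the Foldy--Wouthuysen coordinate $\varpi_a$ rather than with $\widetilde u_a$, which is equivalent since $|\widehat{\mathscr D_a v_a}|=|\widehat\varpi_a|$ pointwise.

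You genuinely differ on the lower spinor. The paper does not use the equation at this point. It writes the lower component of $\varepsilon_a^{-1}\mathscr D_av_a$ as $\mathfrak M_a\widehat\varpi_a$, with the explicit multiplier \eqref{eq:lower-multiplier-formula}. It then proves $\norm{\mathfrak M_a\widehat\varpi_a}_2^2\to\frac1{4m^2}\norm{\nabla Q}_2^2$ from $\mathcal T_{\varepsilon_a}(\varpi_a)\to\frac1{2m}\norm{\nabla Q}_2^2$ and Fourier tail control, and discards $u_a-v_a$ using $\norm{w_a}_E^2=o(a\varepsilon_a^2)$.

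You instead invert the second block row. This is legitimate because $\norm{u_a}_\infty<\delta<1$ gives $N_0:=|x|^{-b}(|u_a|^{q-2}-|u_a|^{p-2})\ge0$. Testing the first row against $\varphi_a$ then yields
\[
\int_{\R^3}(m+\mu_a+N_0)|\chi_a|^2\,dx+(m-\mu_a)\norm{\varphi_a}_2^2=\int_{\R^3}N_0|\varphi_a|^2\,dx,
\]
so $\norm{\chi_a}_2=O(\sqrt a\,\varepsilon_a)$. Next combine Hardy's inequality, $\norm{\chi_a}_{\dot H^b}\le\norm{\chi_a}_2^{1-b}\norm{\nabla\chi_a}_2^b$, and Theorem~\ref{thm:main}. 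This gives $\norm{N_0\chi_a}_2\le C\sqrt a\,\varepsilon_a^{3-\gamma_q-b}$. What you actually need here is $\gamma_q+b<2$, which follows from $q<3-b$; $\gamma_q<2$ alone is not enough. Together with $(m-\mu_a)/\varepsilon_a^2\to\Lambda_q$ and $\mathcal A_q(Q)-\Lambda_q=\frac1{2m}\norm{\nabla Q}_2^2$, you get $\norm{\nabla\varphi_{a_n}}_2^2=a_n\varepsilon_{a_n}^2(\norm{\nabla Q}_2^2+o(1))$. You also get an $H^1$ bound on $\widetilde\varphi_{a_n}$, and hence \eqref{eq:main-lower-spinor-limit}.

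Your route gives more than the theorem states: strong $H^1$ convergence of the rescaled upper spinor, even though no a priori $H^1$ bound on the profiles is available. The price is that it needs the equation, the $H^1\cap L^\infty$ bounds, and the frequency limit proved first. The paper's argument is purely spectral and variational. Similarly, your dilation argument for $u_a/\sqrt a\rightharpoonup0$ uses the profile convergence. The paper instead gets $\norm{\nabla(u_a/\sqrt a)}_2\to0$ directly from $\norm{(H_0-\mu_a)u_a}_2\le Ca^{\frac{q-1}{2}}$.

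Two points in your energy step need tightening.

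First, $J_\delta$ is built on $F_\delta$, and the $E^-$ part of the fiber maximizer is controlled only in $E$. So for $p\ge3-b$ you cannot bound the error by $a^{p/2}\varepsilon_a^{\gamma_p}$. Compare $F_\delta$ with $\sigma^q/q$ through an exponent $r_*\in(q,\min\{p,3-b\})$, as in Lemma~\ref{lem:truncated-remainder}. For the lower bound, the $p$-term in $I$ has a favourable sign anyway.

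Second, boundedness of the rescaled kinetic excess does not follow from the upper bound alone. You need $K_a\le C\mathcal A_q(v_a)$ together with the weighted Gagliardo--Nirenberg bound \eqref{eq:Aq-K-bound} to conclude $K_a=O(a\varepsilon_a^2)$. Also, the sharp constant $\frac1{2m}$ in the liminf comes from fixed balls $|\xi|\le R$. It does not come from $|\xi|\le R/\varepsilon_a$, where the symbol is only comparable to $|\xi|^2/2m$.
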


\begin{remark}\label{rem:two-scale-asymptotics}
For $r\in\{q,p\}$, write
\[
\mathcal A_r(u)
:=
\int_{\R^3}|x|^{-b}|u|^r\,dx.
\]
For any family selected as in Theorem~\ref{thm:small-mass},
we also have, as $a\to0^+$,
\[
\frac{\mathcal A_q(u_a)}{a\varepsilon_a^2}
\rightarrow
-\frac{2q}{2-\gamma_q}e_q,
\quad
\frac{\mathcal A_p(u_a)}{a\varepsilon_a^2}
\rightarrow0.
\]
Thus, for every fixed $p>q$, the higher-power term is
negligible on the energy scale $a\varepsilon_a^2$.
The first energy correction and the asymptotics of $m-\mu_a$
are determined by the limiting weighted Schr\"odinger problem
with exponent $q$.

The negative spectral component $u_a^-$ should be
distinguished from the lower two-component spinor.
Writing $u_a=(\varphi_a,\chi_a)^{\mathsf T}$ in the standard
$2+2$ representation, we have
\[
\frac{\|u_a^-\|_E}{\sqrt a\,\varepsilon_a}
\rightarrow0,
\quad
\frac{\|\chi_a\|_2^2}{a\varepsilon_a^2}
\rightarrow
-\frac{\gamma_q}{m(2-\gamma_q)}e_q>0.
\]
\end{remark}

To make the connection with a nonrelativistic limit explicit,
consider the rescaled spinor $\widetilde u_a$
in~\eqref{eq:main-rescaling} and set
\[
c_a=\varepsilon_a^{-1},
\quad
\eta_a=
\bigl(\sqrt a\,\varepsilon_a^{\frac32}\bigr)^{p-q}.
\]
Using the homogeneity of $|x|^{-b}$ and the balance relation
\[
a^{\frac{q-2}{2}}\varepsilon_a^{\gamma_q}
=
\varepsilon_a^2,
\]
we can rewrite \eqref{eq:main} as
\[
\bigl(-ic_a\alpha\cdot\nabla+mc_a^2\beta\bigr)\widetilde u_a={}\frac{\mu_a}{\varepsilon_a^2}\widetilde u_a+|x|^{-b}
\bigl(|\widetilde u_a|^{q-2}-\eta_a|\widetilde u_a|^{p-2}\bigr)\widetilde u_a,
\]
with $\|\widetilde u_a\|_2^2=1$.
As $a\to0^+$, we have $c_a\to\infty$ and $\eta_a\to0^+$,
and, by \eqref{eq:main-energy-asymptotic},
\[
mc_a^2-\frac{\mu_a}{\varepsilon_a^2}=\frac{m-\mu_a}{\varepsilon_a^2}\rightarrow\Lambda_q.
\]
Thus the small mass limit can be interpreted as a
nonrelativistic limit at unit $L^2$ mass, in which the
coefficient of the $p$-term tends to zero while the
singular weight $|x|^{-b}$ remains unchanged.

The paper is organized as follows. Section~\ref{sec:prelim} introduces the functional setting, the weighted embeddings, and the explicit truncation. Section~\ref{sec:reduction} establishes the constrained spectral reduction. Section~\ref{sec:truncated} constructs a normalized solution of the truncated equation. Section~\ref{sec:recovery} establishes uniform $L^\infty(\R^3;\C^4)$ and $H^1(\R^3;\C^4)$ estimates, recovers
the original equation, and completes the proof of Theorem~\ref{thm:main}. Section~\ref{sec:small-mass} studies the rescaled minimizers and derives the energy,
frequency, and lower-spinor asymptotics in
Theorem~\ref{thm:small-mass}.

\section{Functional setting and convex truncation}\label{sec:prelim}

Throughout the rest of the paper, the parameters $m$, $b$, $q$, and $p$ are fixed and satisfy the assumptions of Theorem~\ref{thm:main}. The truncation parameter $\delta$ will be chosen below, depending only on $p$ and $q$, and then kept fixed. All smallness conditions on $a$ are imposed with these parameters fixed.  The symbol $C>0$ denotes a generic constant that may change from line to line. Unless otherwise stated, it is independent of $a$ but may depend on the fixed parameters and on other quantities already fixed in the argument.

For $1\le r\le\infty$, we write $\norm{u}_r=\norm{u}_{L^r(\R^3;\C^N)}$ and use $\norm{u}_{H^s}$ and $\norm{u}_{W^{1,r}}$ for the norms in $H^s(\R^3;\C^N)$ and $W^{1,r}(\R^3;\C^N)$, respectively, where $N\in\{2,4\}$ is determined by $u$. The same notation is used for real-valued functions. Here $\|\cdot\|_{\mathrm{op}}$ denotes the operator norm induced by the Euclidean norm.

We use the unitary Fourier transform
\[
\widehat z(\xi)=(2\pi)^{-3/2}
\int_{\R^3}e^{-ix\cdot\xi}z(x)\,dx
\]
initially for integrable functions and then by its usual
extension to $L^2$ and tempered distributions. Thus
$\widehat{-i\partial_j z}(\xi)=\xi_j\widehat z(\xi)$.
Our Sobolev norms are
\[
\|z\|_{H^s}^2=\int_{\R^3}(1+|\xi|^2)^s
|\widehat z(\xi)|^2\,d\xi,
\qquad
\|z\|_{\dot H^s}^2=\int_{\R^3}|\xi|^{2s}
|\widehat z(\xi)|^2\,d\xi
\]
whenever the corresponding integrals are finite.

\subsection{The Dirac operator and weighted embeddings}
We use the standard $2+2$ block representation of the Dirac matrices
\[
\alpha_k=
\begin{pmatrix}
0&\sigma_k\\
\sigma_k&0
\end{pmatrix},
\quad
\beta=
\begin{pmatrix}
I_2&0\\
0&-I_2
\end{pmatrix},
\quad k=1,2,3,
\]
where $I_2$ is the $2\times2$ identity matrix and
\[
\sigma_1=
\begin{pmatrix}
0&1\\
1&0
\end{pmatrix},
\quad
\sigma_2=
\begin{pmatrix}
0&-i\\
i&0
\end{pmatrix},
\quad
\sigma_3=
\begin{pmatrix}
1&0\\
0&-1
\end{pmatrix}
\]
are the Pauli matrices. We also write
\[
\boldsymbol{\sigma}=(\sigma_1,\sigma_2,\sigma_3),
\quad
\boldsymbol{\sigma}\cdot\xi
=\sum_{k=1}^3\sigma_k\xi_k,
\quad \xi\in\R^3.
\]

For $N\in\{2,4\}$, we regard $L^2(\R^3;\C^N)$ as a real Hilbert space with inner product
\[
\ip{u}{v}_2:=\Re\int_{\R^3}u\cdot\overline v\,dx.
\]
The free Dirac operator $H_0=-i\alpha\cdot\nabla+m\beta$ is self-adjoint on $L^2(\R^3;\C^4)$ with $D(H_0)=H^1(\R^3;\C^4)$ and
\[
\sigma(H_0)=(-\infty,-m]\cup[m,+\infty).
\]
Let $P^\pm$ denote the spectral projections associated with the positive and negative parts of $\sigma(H_0)$, respectively, and set
\[
L^\pm=P^\pm L^2(\R^3;\C^4).
\]
Then $L^2(\R^3;\C^4)=L^+\oplus L^-$ is an orthogonal decomposition.

The natural energy space associated with $H_0$ is $E:=D\bigl(|H_0|^{\frac12}\bigr)$, endowed with
\[
\ip{u}{v}_E=\ip{|H_0|^{\frac12}u}{|H_0|^{\frac12}v}_2,\quad\norm{u}_E^2=\ip{u}{u}_E.
\]
Since $H_0^2=(-\Delta+m^2)I_4$, we have $E=D\bigl(|H_0|^{\frac12}\bigr)\cong H^{\frac12}(\R^3;\C^4)$ with equivalent norms. More precisely,
\begin{equation}\label{eq:normequiv}
m\norm{u}_2^2\le\norm{u}_E^2,\quad\min\{m,1\}\norm{u}_{H^{\frac12}}^2\le\norm{u}_E^2
\le\max\{m,1\}\norm{u}_{H^{\frac12}}^2,\quad u\in E.
\end{equation}
Setting $E^\pm=P^\pm E=E\cap L^\pm$, we obtain the $E$-orthogonal decomposition $E=E^+\oplus E^-$. Hence every $u\in E$ admits a unique decomposition $u=u^++u^-$ with $u^\pm=P^\pm u$. The quadratic form associated with $H_0$ extends continuously to $E$ and satisfies
\[
\langle H_0u,v\rangle_{E^*,E}=\ip{u^+}{v^+}_E-\ip{u^-}{v^-}_E,\quad\langle H_0u,u\rangle_{E^*,E}=\norm{u^+}_E^2-\norm{u^-}_E^2.
\]

In Fourier variables, $H_0$ has the symbol $\widehat H_0(\xi)=\alpha\cdot\xi+m\beta$. To diagonalize $\widehat H_0(\xi)$, set
\[
\lambda(\xi)=\sqrt{m^2+|\xi|^2},\quad\rho_\pm(\xi)=\left(\frac{\lambda(\xi)\pm m}{2\lambda(\xi)}\right)^{\frac12}
\]
and
\[
U(\xi)=\frac{\bigl(m+\lambda(\xi)\bigr)I_4+\beta\alpha\cdot\xi}{\sqrt{2\lambda(\xi)\bigl(m+\lambda(\xi)\bigr)}}.
\]
Then $U(\xi)$ is unitary and
\[
U(\xi)\widehat H_0(\xi)U(\xi)^{-1}=\lambda(\xi)\beta.
\]
Consequently, the Foldy--Wouthuysen transformation $U_{\rm FW}=\mathcal F^{-1}U\mathcal F$ is unitary on $L^2(\R^3;\C^4)$ and satisfies
\[
U_{\rm FW}H_0U_{\rm FW}^{-1}=\beta|H_0|.
\]
These standard facts about the free Dirac operator and its Foldy--Wouthuysen diagonalization may be found, for example, in~\cite[Chapter~1]{Thaller1992}.

We use the notation $\mathcal A_r$ introduced in Remark~\ref{rem:two-scale-asymptotics} for every $r>2$ and measurable $u:\R^3\to\C^N$, where $N\in\{2,4\}$, allowing $\mathcal A_r(u)=+\infty$. For $1\le r<\infty$ and $u\in L^r(\R^3,|x|^{-b}\,dx;\C^N)$, we write
\[
\norm{u}_{L_b^r}:=\left(\int_{\R^3}|x|^{-b}|u|^r\,dx\right)^{\frac1r}.
\]

The subcritical compact embedding for $\C^4$-valued functions
was proved in \cite[Lemma~2.1]{WangChang2026}.
We include a proof that also covers $\C^2$-valued functions
and establishes continuity of the embedding at the endpoint
$r=3-b$.

\begin{lemma}\label{lem:weighted}
Let $0<b<1$ and $N\in\{2,4\}$. For every $2<r\le3-b$, the embedding
\[
H^{\frac12}(\R^3;\C^N)\hookrightarrow L^r(\R^3,|x|^{-b}\,dx;\C^N)
\]
is continuous and is compact whenever $2<r<3-b$, but it is not compact at $r=3-b$. Moreover,
\[
\mathcal A_r(u)\le C_r\norm{u}_{H^{\frac12}}^r,\quad u\in H^{\frac12}(\R^3;\C^N).
\]
In particular, for $N=4$ and $u\in E$,
\begin{equation}\label{eq:weighted}
\mathcal A_r(u)\le C_r\norm{u}_E^r.
\end{equation}
For every $r>3-b$, there exists $u\in H^{\frac12}(\R^3;\C^N)$ such that $\mathcal A_r(u)=\infty$.
\end{lemma}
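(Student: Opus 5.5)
It suffices to treat each component separately, so I reduce to scalar $u\in H^{\frac12}(\R^3)$; the $\C^N$ case follows since $|u|^r\le C_N\sum_j|u_j|^r$ and $\norm{u_j}_{H^{\frac12}}\le\norm{u}_{H^{\frac12}}$.

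\emph{Continuity for $2<r\le 3-b$.} I split $\R^3=B_1\cup B_1^c$. On $B_1^c$ the weight is at most $1$, and $H^{\frac12}\hookrightarrow L^r$ for $2\le r\le3$ gives $\int_{B_1^c}|x|^{-b}|u|^r\le C\norm{u}_{H^{\frac12}}^r$. On $B_1$ I use H\"older with exponents $s=3/r$ and $s'=3/(3-r)$:
\[
\int_{B_1}|x|^{-b}|u|^r\le \Bigl(\int_{B_1}|x|^{-bs'}\Bigr)^{1/s'}\norm{u}_3^r .
\]
The weight factor is finite exactly when $bs'<3$, that is, $r<3-b$. At the endpoint $r=3-b$ this fails, so there I would instead use the weighted Hardy/Stein--Weiss inequality $\int|x|^{-1}|u|^2\le C\norm{u}_{\dot H^{1/2}}^2$ (Kato's inequality). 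Interpolating via H\"older,
\[
\int |x|^{-b}|u|^{3-b}=\int \bigl(|x|^{-1}|u|^2\bigr)^{b}|u|^{3-3b}\le \Bigl(\int|x|^{-1}|u|^2\Bigr)^{b}\norm{u}_3^{3-3b},
\]
with exponents $1/b$ and $1/(1-b)$, since $(3-3b)/(1-b)=3$. Combined with Sobolev this gives $\mathcal A_{3-b}(u)\le C\norm{u}_{H^{\frac12}}^{3-b}$. The same interpolation, between $\int|x|^{-1}|u|^2$ and $\norm{u}_{t}$ for suitable $t\in[2,3]$, also handles all $r$ uniformly. Estimate~\eqref{eq:weighted} then follows from \eqref{eq:normequiv}.

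\emph{Compactness for $r<3-b$.} Let $u_n\rightharpoonup0$ in $H^{\frac12}$. On a ball $B_R$, Rellich gives $u_n\to0$ in $L^t(B_R)$ for $t<3$. Choosing $t<3$ slightly above $r$ so that $|x|^{-b}\in L^{t/(t-r)}(B_R)$, which is possible because $r<3-b$ is a strict inequality, gives local convergence. On $B_R^c$, the tail is bounded by
\[
R^{-b}\sup_n\norm{u_n}_r^r\le CR^{-b},
\]
which is uniformly small for large $R$. This is the one point where the decay of the weight substitutes for translation compactness.

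\emph{Failure at $r=3-b$ and for $r>3-b$.} For noncompactness at $r=3-b$ I use the scaling $u_\lambda(x)=\lambda u(\lambda x)$ with $\lambda\to\infty$ and $u\in C_c^\infty\setminus\{0\}$. Then $\norm{u_\lambda}_{\dot H^{1/2}}$ and $\mathcal A_{3-b}(u_\lambda)$ are invariant, while $\norm{u_\lambda}_2=\lambda^{-1/2}\norm{u}_2\to0$. So $u_\lambda$ is bounded in $H^{\frac12}$ and converges weakly to $0$, but $\mathcal A_{3-b}(u_\lambda)$ is a fixed positive constant. For $r>3-b$ I take
\[
u(x)=|x|^{-\theta}\bigl(\log(1/|x|)\bigr)^{-\kappa}\psi(x),
\]
with $\psi$ a cutoff near the origin. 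This $u$ lies in $H^{\frac12}$ when $\theta<1$, which is the threshold, or at $\theta=1$ with a suitable log factor; equivalently it is in $H^1$ when $\theta<1/2$. Meanwhile $\mathcal A_r(u)=\infty$ when $\theta r+b\ge3$. Since $r>3-b$, I can choose $\theta\in\bigl((3-b)/r,1\bigr)$, and then $u\in H^{\frac12}$ with $\mathcal A_r(u)=\infty$. To verify the membership in $H^{\frac12}$ I would use homogeneity: the Fourier transform of $|x|^{-\theta}$ is $c|\xi|^{\theta-3}$, and I would handle the cutoff by a commutator bound.

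The main obstacle is the endpoint continuity, which depends on Kato's inequality (or Herbst's). That inequality is standard, and I would cite it rather than reprove it.
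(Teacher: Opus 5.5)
Your proof is correct. For continuity, compactness and non-compactness at $r=3-b$ it follows the paper. At the endpoint you use the same Kato--Hardy inequality, $\dot H^{\frac12}\hookrightarrow L^3$, and H\"older with exponents $\frac1b,\frac1{1-b}$. Your exponent $\frac{3}{3-r}$ on $B_1$ is the borderline case of the paper's $\theta$. The compactness step uses the same tail bound $R^{-b}\norm{u_n}_r^r$; you treat the origin by one H\"older step on $B_R$ instead of a separate $B_\varepsilon$ piece. The dilation $\lambda u(\lambda x)$ is also the paper's.

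The genuine difference is the example for $r>3-b$. The paper superposes dilates $\zeta_{\lambda_n}$, $\lambda_n=2^{Ln}$, of a bump supported in an annulus, with weights $2^{-n}$. Membership in $H^{\frac12}$ is then just the triangle inequality, since $\norm{\zeta_\lambda}_{H^{\frac12}}$ stays bounded for $\lambda\ge1$. Divergence follows from the disjoint supports and $\mathcal A_r(\zeta_\lambda)=\lambda^{r+b-3}\mathcal A_r(\zeta)$, so nothing beyond the scaling identities of the non-compactness step is needed.

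Your single profile $u=\psi|x|^{-\theta}$ with $\frac{3-b}{r}<\theta<1$ also works, and the logarithm is unnecessary. It is more explicit and shows the obstruction as a point singularity at the origin. However, it requires $\psi|x|^{-\theta}\in H^{\frac12}$ for $\theta$ possibly close to $1$, where $H^1$ does not help, and you only sketch this. A short rigorous route:
\begin{itemize}
\item $|\nabla u|\le C|x|^{-\theta-1}$ on a ball, so $\nabla u\in L^{\frac32}$ whenever $\theta<1$; also $u\in L^2$ since $2\theta<3$.
\item Duality with $\dot H^{\frac12}\hookrightarrow L^3$ gives $L^{\frac32}\hookrightarrow\dot H^{-\frac12}$, hence $\norm{u}_{\dot H^{\frac12}}\le C\norm{\nabla u}_{\frac32}<\infty$.
\end{itemize}

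One slip to fix: in the compactness step, $t$ must be taken close to $3$, namely $\frac{3r}{3-b}<t<3$, not ``slightly above $r$''. As $t\downarrow r$, the exponent $\frac{t}{t-r}$ blows up and $|x|^{-b}\notin L^{t/(t-r)}(B_R)$. The integrability condition you actually wrote down is the correct one; only the description of $t$ is off.
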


\begin{proof}
At the endpoint $r=3-b$, applying fractional Hardy inequality~\cite{BahouriCheminDanchin2011} to each component of $u$ and summing, we obtain
\[
\int_{\R^3}\frac{|u|^2}{|x|}\,dx\le C\norm{u}_{\dot H^{\frac12}}^2.
\]
Combining this estimate with critical Sobolev embedding $\dot H^{\frac12}(\R^3;\C^N)\hookrightarrow L^3(\R^3;\C^N)$ and H\"{o}lder's inequality, we have
\[
\mathcal A_{3-b}(u)\le\left(\int_{\R^3}\frac{|u|^2}{|x|}\,dx\right)^b\left(\int_{\R^3}|u|^3\,dx\right)^{1-b}
\le C\norm{u}_{\dot H^{\frac12}}^{3-b}\le C\norm{u}_{H^{\frac12}}^{3-b}.
\]
For $2<r<3-b$, choose $\theta>1$ such that
\[
\frac3{3-r}<\theta<\frac3b
\]
and let $\theta'=\frac{\theta}{\theta-1}$. Then $b\theta<3$ and $r\theta'<3$, which implies
\[
\int_{B_1}|x|^{-b}|u|^r\,dx\le\left(\int_{B_1}|x|^{-b\theta}\,dx\right)^{\frac1\theta}\norm{u}_{r\theta'}^r
\le C\norm{u}_{H^{\frac12}}^r.
\]
Since $|x|^{-b}\le1$ on $\R^3\setminus B_1$, we also have
\[
\int_{\R^3\setminus B_1}|x|^{-b}|u|^r\,dx\le\norm{u}_r^r\le C\norm{u}_{H^{\frac12}}^r.
\]
For $u\in E$, by \eqref{eq:normequiv}, we obtain
\[
\mathcal A_r(u)
\le C_r\norm{u}_{H^{\frac12}}^r
\le C_r\norm{u}_E^r,
\quad 2<r\le3-b,
\]
which proves \eqref{eq:weighted}.

We now prove that the embedding $H^{\frac12}(\R^3;\C^N)\hookrightarrow L^r(\R^3,|x|^{-b}\,dx;\C^N)$ is compact for $2<r<3-b$. Let $\{u_n\}$ be a bounded sequence in $H^{\frac12}(\R^3;\C^N)$. Passing to a subsequence, we may assume that $u_n\rightharpoonup u$ in $H^{\frac12}(\R^3;\C^N)$. Set $z_n=u_n-u$. For $R>1$,
\[
\int_{\R^3\setminus B_R}|x|^{-b}|z_n|^r\,dx\le R^{-b}\norm{z_n}_r^r\le C R^{-b}.
\]
Fix $0<\varepsilon<1$. Since the embedding $H^{\frac12}(B_R;\C^N)\hookrightarrow L^r(B_R;\C^N)$ is compact for $r<3$, we deduce that
\[
z_n\to0\quad\text{strongly in }L^r(B_R;\C^N).
\]
Moreover, $|x|^{-b}\le\varepsilon^{-b}$ on $B_R\setminus B_\varepsilon$. Hence
\[
\int_{B_R\setminus B_\varepsilon}|x|^{-b}|z_n|^r\,dx\le\varepsilon^{-b}\norm{z_n}_{L^r(B_R)}^r\to0.
\]
By H\"{o}lder's inequality, we have
\[
\int_{B_\varepsilon}|x|^{-b}|z_n|^r\,dx\le\left(\int_{B_\varepsilon}|x|^{-b\theta}\,dx\right)^{\frac1\theta}
\norm{z_n}_{r\theta'}^r\le C\varepsilon^{\frac3\theta-b}.
\]
Combining the estimates on $\R^3\setminus B_R$,
$B_R\setminus B_\varepsilon$, and $B_\varepsilon$,
we obtain
\[
\limsup_{n\to\infty}
\int_{\R^3}|x|^{-b}|z_n|^r\,dx
\le C\left(R^{-b}
+\varepsilon^{\frac3\theta-b}\right).
\]
Letting $R\to\infty$ and $\varepsilon\to0^+$, we conclude that
\[
u_n\to u\quad\text{in }
L^r(\R^3,|x|^{-b}\,dx;\C^N).
\]

To show that the embedding is not compact for $r=3-b$,
let $\eta\in C_c^\infty(\R^3;\C^N)\setminus\{0\}$ and set
$\eta_\lambda(x)=\lambda\eta(\lambda x)$. Then
\[
\norm{\eta_\lambda}_2^2=\lambda^{-1}\norm{\eta}_2^2,\quad
\norm{\eta_\lambda}_{\dot H^{\frac12}}^2=\norm{\eta}_{\dot H^{\frac12}}^2,
\]
which implies that $\{\eta_\lambda\}_{\lambda\ge1}$ is bounded in $H^{\frac12}(\R^3;\C^N)$. Since $\norm{\eta_\lambda}_2\to0^+$ as $\lambda\to\infty$, we have $\eta_\lambda\rightharpoonup0$ in $H^{\frac12}(\R^3;\C^N)$. However, $\mathcal A_{3-b}(\eta_\lambda)=\mathcal A_{3-b}(\eta)>0$. Thus the sequence $\{\eta_n\}_{n\ge1}$ has no strongly convergent subsequence in
$L^{3-b}(\R^3,|x|^{-b}\,dx;\C^N)$.

Let $r>3-b$ and put $d=r+b-3>0$. Choose a nonzero function $\zeta\in C_c^\infty(\R^3;\C^N)$  with $\operatorname{supp}\zeta\subset\{x\in\R^3:1<|x|<2\}$. For $\lambda>1$,  set $\zeta_\lambda(x)=\lambda\zeta(\lambda x)$.
Then
\[
\norm{\zeta_\lambda}_{\dot H^{\frac12}}=\norm{\zeta}_{\dot H^{\frac12}},\quad
\norm{\zeta_\lambda}_2^2=\lambda^{-1}\norm{\zeta}_2^2,\quad
\mathcal A_r(\zeta_\lambda)=\lambda^d\mathcal A_r(\zeta).
\]
Fix an integer $L>\max\{2,\frac{r}{d}\}$ and define $\lambda_n=2^{Ln}$ and
\[
u=\sum_{n=1}^\infty 2^{-n}\zeta_{\lambda_n}.
\]
Since $\norm{\zeta_{\lambda_n}}_{H^{\frac12}}\le C$, we have
\[
\sum_{n=1}^\infty 2^{-n}
\norm{\zeta_{\lambda_n}}_{H^{\frac12}}
\le C\sum_{n=1}^\infty 2^{-n}<\infty.
\]
Thus $u\in H^{\frac12}(\R^3;\C^N)$.
Moreover, since $L>2$,
\[
\operatorname{supp}\zeta_{\lambda_n}
\subset\{x\in\R^3:2^{-Ln}<|x|<2^{1-Ln}\},
\quad
\operatorname{supp}\zeta_{\lambda_n}
\cap\operatorname{supp}\zeta_{\lambda_k}
=\varnothing\quad(n\ne k).
\]
Consequently,
\[
\mathcal A_r(u)
=\mathcal A_r(\zeta)\sum_{n=1}^\infty
2^{-nr}\lambda_n^d
=\mathcal A_r(\zeta)\sum_{n=1}^\infty
2^{n(Ld-r)}=\infty.
\]
\end{proof}

\begin{lemma}\label{lem:Ar-C2}
For every $2<r\le3-b$, the functional $\mathcal A_r:E\to\R$ is of class $C^2$, with
\[
\mathcal A_r'(u)[\phi]=r\Re\int_{\R^3}|x|^{-b}|u|^{r-2}u\cdot\overline\phi\,dx,
\]
\[
\mathcal A_r''(u)[\phi,\psi]=r\int_{\R^3}|x|^{-b}|u|^{r-2}\Re(\phi\cdot\overline\psi)\,dx
+r(r-2)\int_{\R^3}|x|^{-b}|u|^{r-4}\Re(u\cdot\overline\phi)\Re(u\cdot\overline\psi)\,dx,
\]
where $\C^4$ is viewed as a real Euclidean space. The product $|u|^{r-4}\Re(u\cdot\overline\phi)\Re(u\cdot\overline\psi)$ is defined to be zero wherever $u=0$. Moreover,
\[
\left|\mathcal A_r'(u)[\phi]\right|\le C_r\norm{u}_E^{r-1}\norm{\phi}_E,\quad
\left|\mathcal A_r''(u)[\phi,\psi]\right|\le C_r\norm{u}_E^{r-2}\norm{\phi}_E\norm{\psi}_E.
\]
\end{lemma}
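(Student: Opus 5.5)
The plan is to work pointwise with the real $C^2$ function $g(z)=|z|^r$ on $\C^4\cong\R^8$, and then lift to $E$ using the weighted embedding of Lemma~\ref{lem:weighted} together with H\"older's inequality in $L^r_b:=L^r(\R^3,|x|^{-b}dx)$.

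Since $r>2$, $g$ is $C^2$ on $\R^8$. Its derivatives are
\[
g'(z)[w]=r|z|^{r-2}\Re(z\cdot\overline w),
\]
\[
g''(z)[w,v]=r|z|^{r-2}\Re(w\cdot\overline v)+r(r-2)|z|^{r-4}\Re(z\cdot\overline w)\Re(z\cdot\overline v).
\]
The second formula extends continuously by $0$ at $z=0$. These satisfy the bounds
\[
|g'(z)[w]|\le r|z|^{r-1}|w|,\qquad \|g''(z)\|_{\mathrm{op}}\le r(r-1)|z|^{r-2}.
\]
The map $z\mapsto g''(z)$ is continuous and homogeneous of degree $r-2$.

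\emph{Bounds on the candidate derivatives.} For $u,\phi,\psi\in E$, H\"older in $L^r_b$ with exponents $\frac{r}{r-2},r,r$ (respectively $\frac{r}{r-1},r$) gives
\[
\int|x|^{-b}|u|^{r-2}|\phi||\psi|\le \|u\|_{L^r_b}^{r-2}\|\phi\|_{L^r_b}\|\psi\|_{L^r_b}.
\]
Inserting $\|v\|_{L^r_b}\le C\|v\|_E$ from \eqref{eq:weighted} then yields the stated estimates for the candidate $\mathcal A_r'(u)$ and $\mathcal A_r''(u)$. In particular these are bounded linear and bilinear forms on $E$.

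\emph{Differentiability.} By the mean value theorem,
\[
g(u+\phi)-g(u)-g'(u)[\phi]-\tfrac12 g''(u)[\phi,\phi]=\int_0^1(1-t)\bigl(g''(u+t\phi)-g''(u)\bigr)[\phi,\phi]\,dt .
\]
The same holds at first order for $g'$. It therefore suffices to show that the Nemytskii-type map $u\mapsto g''(u)$ is continuous from $L^r_b$ into the space of multipliers. Precisely, I will show
\[
\sup_{\|\phi\|_{L^r_b},\|\psi\|_{L^r_b}\le1}\int|x|^{-b}\,\|g''(u_n)-g''(u)\|_{\mathrm{op}}|\phi||\psi|\le \Bigl\|\,\|g''(u_n)-g''(u)\|_{\mathrm{op}}\Bigr\|_{L^{r/(r-2)}_b}\to0
\]
whenever $u_n\to u$ in $L^r_b$. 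This gives continuity of $\mathcal A_r''$, and via Taylor's formula with integral remainder it gives Fr\'echet differentiability of $\mathcal A_r$ and of $\mathcal A_r'$ with the stated derivatives. Composing with the continuous embedding $E\hookrightarrow L^r_b$ finishes the argument.

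The main technical step is this continuity of $u\mapsto\|g''(u)\|_{\mathrm{op}}$ into $L^{r/(r-2)}_b$, a Krasnosel'skii-type result on the measure space $(\R^3,|x|^{-b}dx)$. I will prove it by the standard subsequence argument. Given $u_n\to u$ in $L^r_b$, extract a subsequence converging a.e. and dominated by some $h\in L^r_b$ (partial Riesz--Fischer). Then
\[
\|g''(u_n)-g''(u)\|_{\mathrm{op}}^{r/(r-2)}\le C\bigl(h^r+|u|^r\bigr),
\]
which is integrable against $|x|^{-b}dx$. The integrand also tends to $0$ a.e. by continuity of $g''$. Dominated convergence then gives the claim along the subsequence, and since the limit is unique, along the whole sequence. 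The case $2<r<3$ with $r-4<0$ needs no special care, because only the continuity of $g''$ on $\R^8$ is used, and that holds for $r>2$.
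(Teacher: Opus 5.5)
Your proof is correct, and it reaches continuity of $\mathcal A_r''$ by a different route from the paper.

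\textbf{The paper's route.} It uses that $2<r\le 3-b<3$, so the matrix-valued maps $z\mapsto|z|^{r-2}I$ and $z\mapsto|z|^{r-4}z\otimes z$ are globally $(r-2)$-H\"older on $\C^4\cong\R^8$. This gives directly the quantitative bound $\|\mathcal A_r''(u)-\mathcal A_r''(v)\|_{\mathscr L(E,E^*)}\le C\|u-v\|_E^{r-2}$, with no subsequence extraction.

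\textbf{Your route.} You use only the continuity of $g''$ and the growth bound $\|g''(z)\|_{\mathrm{op}}\le r(r-1)|z|^{r-2}$, then run a Krasnosel'skii-type dominated-convergence argument on $(\R^3,|x|^{-b}dx)$. This yields mere continuity, with no modulus. On the other hand, it never uses $r<3$, so it proves $\mathcal A_r\in C^2(L^r_b)$ for every $r>2$. Within the lemma this extra generality is not needed, since the embedding already forces $r\le 3-b<3$.

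\textbf{What your Taylor step adds.} Your integral-remainder step explicitly verifies that the displayed forms are the Fr\'echet derivatives. The paper leaves this implicit when it passes from continuity of the candidate $\mathcal A_r''$ to $\mathcal A_r\in C^2(E,\R)$.

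\textbf{One phrasing fix.} Justify the passage from the subsequence to the full sequence by the subsequence principle, not by ``uniqueness of the limit''. Every subsequence of $\{u_n\}$ still converges to $u$ in $L^r_b$. Hence it has a further subsequence along which $\bigl\|\,\|g''(u_n)-g''(u)\|_{\mathrm{op}}\bigr\|_{L^{r/(r-2)}_b}\to0$, and therefore the whole sequence converges.
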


\begin{proof}
By H\"older's inequality and Lemma~\ref{lem:weighted}, we have
\[
\begin{aligned}
|\mathcal A_r'(u)[\varphi]|
\le r\int_{\R^3}|x|^{-b}|u|^{r-1}|\varphi|\,dx
\le r\norm{u}_{L_b^r}^{r-1}\norm{\varphi}_{L_b^r}
\le C_r\norm{u}_E^{r-1}\norm{\varphi}_E.
\end{aligned}
\]
For the second derivative, using
\[
\left|\Re(\phi\cdot\overline\psi)\right|\le|\phi||\psi|,\quad
\left|\Re(u\cdot\overline\phi)\Re(u\cdot\overline\psi)\right|\le|u|^2|\phi||\psi|,
\]
we obtain
\[
\begin{aligned}
|\mathcal A_r''(u)[\phi,\psi]|&\le C_r\int_{\R^3}|x|^{-b}|u|^{r-2}|\phi||\psi|\,dx\\
&\le C_r\norm{u}_{L_b^r}^{r-2}\norm{\phi}_{L_b^r}\norm{\psi}_{L_b^r}\\
&\le C_r\norm{u}_E^{r-2}\norm{\phi}_E\norm{\psi}_E.
\end{aligned}
\]
To prove the continuity of $\mathcal A_r''$, let $u_n\to u$ in $E$. By Lemma~\ref{lem:weighted}, we also have $u_n\to u$ in $L^r(\R^3,|x|^{-b}\,dx;\C^4)$. Since $2<r\le3-b<3$, the maps
\[
T_1(z)=|z|^{r-2}I,\quad
T_2(z)=
\begin{cases}
|z|^{r-4}z\otimes z,&z\ne0,\\
0,&z=0,
\end{cases}
\]
are $(r-2)$-H\"{o}lder continuous on $\C^4$ viewed as a real Euclidean space. Here $(z\otimes z)h=\Re(z\cdot\overline h)z$. Consequently,
\[
\begin{aligned}
\norm{T_j(u_n)-T_j(u)}_{L_b^{\frac r{r-2}}}
&=\left(
\int_{\R^3}|x|^{-b}
\norm{T_j(u_n(x))-T_j(u(x))}_{\mathrm{op}}^{\frac r{r-2}}
\,dx
\right)^{\frac{r-2}{r}}\\
&\le C\norm{u_n-u}_{L_b^r}^{r-2}\to0,
\quad j=1,2.
\end{aligned}
\]
By H\"{o}lder's inequality and Lemma~\ref{lem:weighted}, we obtain
\[
\norm{\mathcal A_r''(u_n)-\mathcal A_r''(u)}_{\mathscr L(E,E^*)}\le C\sum_{j=1}^2\norm{T_j(u_n)-T_j(u)}_{L_b^{\frac{r}{r-2}}}\to0.
\]
Thus $u\mapsto\mathcal A_r''(u)$ is continuous from $E$ into $\mathscr L(E,E^*)$, and hence $\mathcal A_r\in C^2(E,\R)$.
\end{proof}

\begin{definition}\label{def:original-energy-domain}
For the fixed exponent $p>q$, let
\[
\mathcal D_p=\left\{u\in E:\mathcal A_p(u)<\infty\right\}.
\]
The original energy is defined on $\mathcal D_p$ by
\begin{equation}\label{eq:original-energy-domain}
I(u)=\frac12\norm{u^+}_E^2-\frac12\norm{u^-}_E^2-\frac1q\mathcal A_q(u)+\frac1p\mathcal A_p(u).
\end{equation}
If $p\le3-b$, by Lemma~\ref{lem:Ar-C2}, we have $\mathcal D_p=E$ and $I\in C^2(E,\R)$. If $p>3-b$, it follows from Lemma~\ref{lem:weighted} that
$\mathcal D_p\subsetneq E$, so $I$ is not a finite-valued functional on the whole space $E$.
\end{definition}

A normalized weak solution of \eqref{eq:main} is a pair
$(u,\mu)\in\mathcal D_p\times\R$ with $\|u\|_2^2=a$ such that,
for every $\phi\in C_c^\infty(\R^3;\C^4)$,
\[
\begin{aligned}
\langle H_0u,\phi\rangle_{E^*,E}
&=\mu\ip{u}{\phi}_2\\
&\quad+\Re\int_{\R^3}|x|^{-b}
\bigl(|u|^{q-2}-|u|^{p-2}\bigr)
u\cdot\overline\phi\,dx.
\end{aligned}
\]
The nonlinear term is locally integrable for $u\in\mathcal D_p$,
by H\"older's inequality with respect to $|x|^{-b}\,dx$.
The energy of such a solution is understood as in
\eqref{eq:original-energy-domain}.
For the solutions constructed below, the equation in fact holds
in $L^2(\R^3;\C^4)$, by their $H^1\cap L^\infty$ regularity and
Lemma~\ref{lem:original-regular-branch}.

\begin{lemma}\label{lem:original-regular-branch}
If $u\in H^1(\R^3;\C^4)\cap L^\infty(\R^3;\C^4)$, then $u\in\mathcal D_p$ and
\begin{equation}\label{eq:Ap-by-Aq}
\mathcal A_p(u)\le\norm{u}_\infty^{p-q}\mathcal A_q(u).
\end{equation}
Moreover,
\begin{equation}\label{eq:original-source-L2}
|x|^{-b}\bigl(|u|^{q-2}+|u|^{p-2}\bigr)u\in L^2(\R^3;\C^4).
\end{equation}
\end{lemma}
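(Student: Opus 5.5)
The plan is to reduce everything to pointwise bounds and then invoke Lemma~\ref{lem:weighted} at a subcritical exponent. Let $u\in H^1(\R^3;\C^4)\cap L^\infty(\R^3;\C^4)$. Since $H^1\subset H^{\frac12}=E$, Lemma~\ref{lem:weighted} with $r=q\in(2,3-b)$ gives $\mathcal A_q(u)<\infty$. For the $p$-term, the pointwise bound $|u|^p=|u|^{p-q}|u|^q\le\norm{u}_\infty^{p-q}|u|^q$ a.e. is multiplied by $|x|^{-b}$ and integrated; this yields \eqref{eq:Ap-by-Aq}. In particular $\mathcal A_p(u)<\infty$, so $u\in\mathcal D_p$ by Definition~\ref{def:original-energy-domain}.

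For \eqref{eq:original-source-L2}, the pointwise bound
\[
\bigl||x|^{-b}(|u|^{q-2}+|u|^{p-2})u\bigr|\le |x|^{-b}\bigl(1+\norm{u}_\infty^{p-q}\bigr)|u|^{q-1}
\]
shows that it suffices to prove $|x|^{-2b}|u|^{2q-2}\in L^1(\R^3)$. I split $\R^3=B_1\cup(\R^3\setminus B_1)$. Outside $B_1$ we have $|x|^{-2b}\le1$, and $|u|^{2q-2}\le\norm{u}_\infty^{2q-4}|u|^2$ because $2q-2>2$; hence this part is bounded by $\norm{u}_\infty^{2q-4}\norm{u}_2^2$. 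Inside $B_1$ we have $|u|^{2q-2}\le\norm{u}_\infty^{2q-2}$, and $\int_{B_1}|x|^{-2b}\,dx<\infty$ since $2b<2<3$. This gives \eqref{eq:original-source-L2}.

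There is no real obstacle here. The only point needing care is integrability of the weight $|x|^{-2b}$ near the origin, which uses $b<1$ (indeed $b<3/2$ would suffice). The $H^1$ hypothesis is used only through $H^1\subset E$, to get $\mathcal A_q(u)<\infty$; the $L^2$ bound for the source needs nothing beyond $L^2\cap L^\infty$.
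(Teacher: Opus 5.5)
Your proposal is correct and follows the paper's proof essentially step for step. You get $\mathcal A_q(u)<\infty$ from Lemma~\ref{lem:weighted}, use the pointwise bound $|u|^p\le\norm{u}_\infty^{p-q}|u|^q$, and split into $B_1$ (where $|x|^{-b}\in L^2(B_1)$ and the $L^\infty$ bound apply) and $\R^3\setminus B_1$ (where $|x|^{-b}\le1$ and the $L^2$ bound apply); working with $|x|^{-2b}|u|^{2q-2}$ in $L^1$ instead of with $L^2$ norms directly is only a cosmetic difference.
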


\begin{proof}
Since $u\in H^1(\R^3;\C^4)\subset H^{\frac12}(\R^3;\C^4)$
and $q<3-b$, we have $\mathcal A_q(u)<\infty$
by Lemma~\ref{lem:weighted}.
Using $|u|^p\le\norm{u}_\infty^{p-q}|u|^q$, we obtain
\eqref{eq:Ap-by-Aq}, and hence $u\in\mathcal D_p$.

To prove \eqref{eq:original-source-L2}, we estimate the nonlinear term
separately on $B_1$ and on $\R^3\setminus B_1$.
Since $b<1$ and $u\in L^\infty(\R^3;\C^4)$, we have
\[
\norm{|x|^{-b}|u|^{q-1}}_{L^2(B_1)}
\le
\norm{|x|^{-b}}_{L^2(B_1)}
\norm{u}_\infty^{q-1}
<\infty.
\]
On $\R^3\setminus B_1$, using $|x|^{-b}\le1$, we deduce that
\[
\norm{|x|^{-b}|u|^{q-1}}_{L^2(\R^3\setminus B_1)}
\le
\norm{u}_\infty^{q-2}\norm{u}_2
<\infty.
\]
Moreover,
\[
|x|^{-b}|u|^{p-1}
\le
\norm{u}_\infty^{p-q}|x|^{-b}|u|^{q-1},
\]
and hence~\eqref{eq:original-source-L2} follows.
\end{proof}

\subsection{The truncated primitive}
We construct a convex truncation of $F$ with global $q$-growth.
Fix $\delta\in(0,1)$, depending only on $p$ and $q$, such that
\begin{equation}\label{eq:delta-choice}
\delta^{p-q}<\frac{q-2}{p-2}.
\end{equation}
Set
\begin{equation}\label{eq:trunc-coefficients}
A_\delta=\frac1q-\frac{p-2}{q(q-2)}\delta^{p-q},
\quad
B_\delta=\frac{p-q}{2(q-2)}\delta^{p-2},
\quad
\Gamma_\delta=-\frac{(p-q)(p-2)}{2pq}\delta^p,
\end{equation}
and define
\begin{equation}\label{eq:trunc-definition}
F_\delta(\sigma)=
\begin{cases}
\displaystyle
\frac{\sigma^q}{q}-\frac{\sigma^p}{p},
&0\le\sigma\le\delta,\\
A_\delta\sigma^q+B_\delta\sigma^2+\Gamma_\delta,
&\sigma\ge\delta.
\end{cases}
\end{equation}
By \eqref{eq:delta-choice} and $p>q>2$, we have
$A_\delta>0$, $B_\delta>0$, and $\Gamma_\delta<0$.
\begin{lemma}\label{lem:truncation}
The function $F_\delta$ defined in \eqref{eq:trunc-definition} belongs to $C^2([0,\infty))$. Moreover, there are constants $c_\delta,C_\delta>0$ such that, for every $\sigma\ge0$,
\begin{equation}\label{eq:trunc-equal}
F_\delta(\sigma)=\frac{\sigma^q}{q}-\frac{\sigma^p}{p},\quad0\le\sigma\le\delta,
\end{equation}
\begin{equation}\label{eq:trunc-convex}
F_\delta(\sigma)\ge0,\quad F_\delta'(\sigma)\ge0,\quad F_\delta''(\sigma)\ge0,
\end{equation}
\begin{equation}\label{eq:trunc-mult}
\sigma F_\delta'(\sigma)-2F_\delta(\sigma)\ge0,
\end{equation}
\begin{equation}\label{eq:trunc-growth}
c_\delta\sigma^q\le F_\delta(\sigma)\le C_\delta\sigma^q,\quad0\le F_\delta'(\sigma)\le C_\delta\sigma^{q-1},\quad
0\le F_\delta''(\sigma)\le C_\delta\sigma^{q-2}.
\end{equation}
\end{lemma}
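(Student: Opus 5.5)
The plan is to verify each property directly from the explicit formulas, treating $[0,\delta]$ and $[\delta,\infty)$ separately and then matching at $\sigma=\delta$.

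\textbf{Step 1: $C^2$ matching at $\sigma=\delta$.} On $[0,\delta]$ we have
\[
F'=\sigma^{q-1}-\sigma^{p-1},\qquad F''=(q-1)\sigma^{q-2}-(p-1)\sigma^{p-2}.
\]
On $[\delta,\infty)$,
\[
F_\delta'=qA_\delta\sigma^{q-1}+2B_\delta\sigma,\qquad F_\delta''=q(q-1)A_\delta\sigma^{q-2}+2B_\delta.
\]
The coefficients \eqref{eq:trunc-coefficients} are presumably the unique solution of the $3\times3$ linear system expressing equality of value, first derivative and second derivative at $\delta$. I will check this by substitution.
\begin{itemize}
\item Second derivatives: $q(q-1)A_\delta\delta^{q-2}+2B_\delta=(q-1)\delta^{q-2}-(q-1)\frac{p-2}{q-2}\delta^{p-2}+\frac{p-q}{q-2}\delta^{p-2}$. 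The $\delta^{p-2}$ coefficient is $\frac{-(q-1)(p-2)+(p-q)}{q-2}=-(p-1)$, as required.
\item First derivatives and values: these are checked the same way.
\end{itemize}
Away from $\delta$ both pieces are smooth (note $q>2$), so $F_\delta\in C^2([0,\infty))$, and \eqref{eq:trunc-equal} holds by definition.

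\textbf{Step 2: convexity and \eqref{eq:trunc-mult}.} On $[0,\delta]$ we have $\sigma^{p-q}\le\delta^{p-q}<\frac{q-2}{p-2}<\frac{q-1}{p-1}<1$. This gives $F''\ge0$ and $F'\ge0$, and $F=\sigma^q(\frac1q-\frac{\sigma^{p-q}}p)\ge0$. Moreover
\[
\sigma F'-2F=\frac{q-2}{q}\sigma^q-\frac{p-2}{p}\sigma^p\ge0,
\]
since $\sigma^{p-q}\le\frac{q-2}{p-2}<\frac{p(q-2)}{q(p-2)}$ (because $p>q$). On $[\delta,\infty)$, all of $A_\delta$, $B_\delta$ and $\sigma$ are positive, so $F_\delta'>0$ and $F_\delta''>0$. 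Also $\sigma F_\delta'-2F_\delta=(q-2)A_\delta\sigma^q-2\Gamma_\delta>0$. Finally, $F_\delta\ge F_\delta(\delta)\ge0$ by monotonicity and continuity.

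\textbf{Step 3: growth bounds \eqref{eq:trunc-growth}.} The upper bounds are immediate.
\begin{itemize}
\item On $[0,\delta]$: $F\le\sigma^q/q$, $F'\le\sigma^{q-1}$, $F''\le(q-1)\sigma^{q-2}$.
\item On $[\delta,\infty)$: since $\sigma\ge\delta$, we have $\sigma^2\le\delta^{2-q}\sigma^q$, $\sigma\le\delta^{2-q}\sigma^{q-1}$ and $1\le\delta^{2-q}\sigma^{q-2}$. The negative constant $\Gamma_\delta$ can be dropped.
\end{itemize}
The lower bound $F_\delta\ge c_\delta\sigma^q$ is the one point needing care, because $\Gamma_\delta<0$.
\begin{itemize}
\item On $[0,\delta]$: $F\ge\sigma^q\bigl(\frac1q-\frac{\delta^{p-q}}p\bigr)$, and the bracket is positive.
\item On $[\delta,\infty)$: integrate \eqref{eq:trunc-mult}, i.e.\ $(F_\delta/\sigma^2)'\ge0$. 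This only gives $\sigma^2$-growth, so instead use directly $F_\delta\ge A_\delta\sigma^q+B_\delta\sigma^2-|\Gamma_\delta|$.
\end{itemize}
For the latter, the claim is $B_\delta\sigma^2\ge|\Gamma_\delta|$ for $\sigma\ge\delta$, which reduces to $B_\delta\delta^2\ge|\Gamma_\delta|$, i.e.\ $\frac{p-q}{2(q-2)}\ge\frac{(p-q)(p-2)}{2pq}$. This is equivalent to $pq\ge(p-2)(q-2)$, which is true. Hence $F_\delta\ge A_\delta\sigma^q$ there, and $c_\delta=\min\{A_\delta,\frac1q-\frac{\delta^{p-q}}p\}>0$.

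The main obstacle is purely the algebra of Step 1, verifying that the given coefficients produce $C^2$ matching; the remaining steps are elementary sign checks.
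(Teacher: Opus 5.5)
Your proposal is correct and follows essentially the paper's route: piecewise sign checks via $\delta^{p-q}<\frac{q-2}{p-2}$, $C^2$ matching at $\delta$ (the value and first-derivative identities you deferred do hold), and the identity $\sigma F_\delta'-2F_\delta=(q-2)A_\delta\sigma^q-2\Gamma_\delta$ on $[\delta,\infty)$. The only difference is the lower bound $F_\delta\ge c_\delta\sigma^q$ on $[\delta,\infty)$: you get the explicit constant $A_\delta$ from $B_\delta\delta^2+\Gamma_\delta>0$ (equivalent to $p+q>2$), whereas the paper argues that $F_\delta(\sigma)/\sigma^q$ is positive, continuous, and tends to $A_\delta>0$ as $\sigma\to\infty$.
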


\begin{proof}
We set
\[
\mathcal M(\sigma)=\sigma F'(\sigma)-2F(\sigma)=\frac{q-2}{q}\sigma^q-\frac{p-2}{p}\sigma^p.
\]
Since $\frac{q-2}{p-2}$ is smaller than $\frac pq$, $1$, $\frac{q-1}{p-1}$, and $\frac{p(q-2)}{q(p-2)}$, we obtain from \eqref{eq:delta-choice} that
\[
F(\sigma)>0,\quad F'(\sigma)>0,\quad F''(\sigma)>0,\quad\mathcal M(\sigma)>0\quad(0<\sigma\le\delta).
\]
Substituting the coefficients in \eqref{eq:trunc-coefficients}, we have
\begin{align*}
A_\delta\delta^q+B_\delta\delta^2+\Gamma_\delta&=\frac{\delta^q}{q}-\frac{\delta^p}{p},\\
qA_\delta\delta^{q-1}+2B_\delta\delta&=\delta^{q-1}-\delta^{p-1},\\
q(q-1)A_\delta\delta^{q-2}+2B_\delta&=(q-1)\delta^{q-2}-(p-1)\delta^{p-2}.
\end{align*}
The one-sided derivatives of $F_\delta$ up to order two
agree at $\sigma=\delta$.
Hence $F_\delta\in C^2([0,\infty))$, while~\eqref{eq:trunc-equal}
follows directly from its definition.

For $\sigma\ge\delta$, we have
\[
F_\delta'(\sigma)=qA_\delta\sigma^{q-1}+2B_\delta\sigma>0,\quad F_\delta''(\sigma)=q(q-1)A_\delta\sigma^{q-2}+2B_\delta>0.
\]
Since $F_\delta(\delta)=F(\delta)>0$ and
$F_\delta'>0$ on $[\delta,\infty)$, we also have
$F_\delta(\sigma)>0$ for $\sigma\ge\delta$.
Combining these inequalities with those on $[0,\delta]$,
we obtain \eqref{eq:trunc-convex}.
Moreover, for $\sigma\ge\delta$,
\[
\sigma F_\delta'(\sigma)-2F_\delta(\sigma)
=(q-2)A_\delta\sigma^q-2\Gamma_\delta>0,
\]
because $A_\delta>0$ and $\Gamma_\delta<0$.
Together with the estimate on $[0,\delta]$, this proves
\eqref{eq:trunc-mult}.

On $[0,\delta]$, by \eqref{eq:trunc-definition}, we deduce that
\[
\left(\frac1q-\frac{\delta^{p-q}}p\right)\sigma^q\le F_\delta(\sigma)\le\frac1q\sigma^q,\quad
0\le F_\delta'(\sigma)\le\sigma^{q-1},\quad0\le F_\delta''(\sigma)\le(q-1)\sigma^{q-2}.
\]
On $[\delta,\infty)$, the upper bounds in
\eqref{eq:trunc-growth} follow directly from
\eqref{eq:trunc-definition}.
For the lower bound, $\frac{F_\delta(\sigma)}{\sigma^q}$ is
positive and continuous on $[\delta,\infty)$ and converges
to $A_\delta>0$ as $\sigma\to\infty$, so its infimum on
this interval is positive. Thus \eqref{eq:trunc-growth} holds for all $\sigma\ge0$.
\end{proof}

By \eqref{eq:trunc-convex}, $F_\delta$ is convex and nondecreasing
on $[0,\infty)$. Hence the function $z\mapsto F_\delta(|z|)$
is convex on $\C^4\simeq\R^8$.

Define
\[
f_\delta(\sigma)=
\begin{cases}
\displaystyle
\frac{F_\delta'(\sigma)}{\sigma},
&\sigma>0,\\
0,
&\sigma=0,
\end{cases}
\]
and set $g_\delta(z)=f_\delta(|z|)z$ for $z\in\C^4$.

\begin{lemma}\label{lem:gdelta-C1}
Regarded as a map from $\R^8$ to $\R^8$, $g_\delta$ is of class $C^1$, $Dg_\delta(0)=0$, and
\begin{equation}\label{eq:gdelta-derivative}
\norm{Dg_\delta(z)}_{\mathrm{op}}\le C_\delta|z|^{q-2},\quad z\in\C^4.
\end{equation}
Consequently,
\begin{equation}\label{eq:gdelta-lipschitz}
\left|g_\delta(z)-g_\delta(y)\right|\le C_\delta\left(|z|^{q-2}+|y|^{q-2}\right)|z-y|,\quad y,z\in\C^4.
\end{equation}
\end{lemma}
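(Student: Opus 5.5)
We compute the Jacobian of $g_\delta(z)=f_\delta(|z|)z$ on $\R^8\setminus\{0\}$ directly, and then treat the origin separately. For $z\ne0$ the function $|z|$ is smooth. Since $F_\delta\in C^2([0,\infty))$ by Lemma~\ref{lem:truncation}, $f_\delta$ is $C^1$ on $(0,\infty)$ with
\[
f_\delta'(\sigma)=\frac{\sigma F_\delta''(\sigma)-F_\delta'(\sigma)}{\sigma^2}.
\]
Hence, for $h\in\R^8$,
\[
Dg_\delta(z)h=f_\delta(|z|)h+f_\delta'(|z|)\,\frac{\Re(z\cdot\overline h)}{|z|}\,z
=\frac{F_\delta'(|z|)}{|z|}\,h+\Bigl(F_\delta''(|z|)-\frac{F_\delta'(|z|)}{|z|}\Bigr)\frac{\Re(z\cdot\overline h)}{|z|^2}\,z .
\]
This matrix is continuous on $\R^8\setminus\{0\}$. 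The growth bounds \eqref{eq:trunc-growth} give $F_\delta'(\sigma)/\sigma\le C_\delta\sigma^{q-2}$ and $F_\delta''(\sigma)\le C_\delta\sigma^{q-2}$. Using $|\Re(z\cdot\overline h)|\,|z|/|z|^2\le|h|$, we therefore get $\norm{Dg_\delta(z)}_{\mathrm{op}}\le C_\delta|z|^{q-2}$ for $z\ne0$, which is \eqref{eq:gdelta-derivative} away from the origin.

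At $z=0$ we have $g_\delta(0)=0$. Moreover, $|g_\delta(h)|=F_\delta'(|h|)\le C_\delta|h|^{q-1}=o(|h|)$ because $q>2$. So $g_\delta$ is differentiable at $0$ with $Dg_\delta(0)=0$, and \eqref{eq:gdelta-derivative} holds trivially there. Continuity of $Dg_\delta$ at $0$ follows from the bound $\norm{Dg_\delta(z)}_{\mathrm{op}}\le C_\delta|z|^{q-2}\to0$ as $z\to0$. Thus $g_\delta\in C^1(\R^8,\R^8)$.

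For \eqref{eq:gdelta-lipschitz}, we apply the mean value inequality along the segment $w(t)=y+t(z-y)$, $t\in[0,1]$:
\[
|g_\delta(z)-g_\delta(y)|\le\sup_{t\in[0,1]}\norm{Dg_\delta(w(t))}_{\mathrm{op}}\,|z-y|\le C_\delta\sup_t|w(t)|^{q-2}|z-y|.
\]
Since $|w(t)|\le\max\{|z|,|y|\}$ and $q-2>0$, we have $|w(t)|^{q-2}\le|z|^{q-2}+|y|^{q-2}$, which gives the claim.

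The only delicate point is the origin. There we need $q>2$ and the pointwise derivative bounds of Lemma~\ref{lem:truncation}, specifically that $F_\delta'(\sigma)/\sigma$ and $F_\delta''(\sigma)$ are both $O(\sigma^{q-2})$. These bounds hold uniformly on both branches of $F_\delta$, so the gluing at $\sigma=\delta$ causes no difficulty.
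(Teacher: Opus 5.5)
Your proposal is correct and follows essentially the same route as the paper. You compute $Dg_\delta$ away from the origin, bound it through the growth estimates for $F_\delta'$ and $F_\delta''$, use $|g_\delta(h)|=o(|h|)$ to get $Dg_\delta(0)=0$, and apply the mean value inequality on the segment. Your explicit formula for $f_\delta'$ in terms of $F_\delta'$ and $F_\delta''$ just spells out the paper's bound $|f_\delta(\sigma)|+\sigma|f_\delta'(\sigma)|\le C_\delta\sigma^{q-2}$.
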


\begin{proof}
By \eqref{eq:trunc-growth}, we have $F_\delta'(0)=0$ and
$f_\delta(\sigma)\to0=f_\delta(0)$ as $\sigma\to0^+$.
Differentiating $g_\delta(z)=f_\delta(|z|)z$ at $z\ne0$
in the direction $h\in\C^4\simeq\R^8$, we obtain
\[
Dg_\delta(z)[h]=f_\delta(|z|)h+\frac{f_\delta'(|z|)}{|z|}\Re(z\cdot\overline h)z.
\]
It also follows from \eqref{eq:trunc-growth} that
\[
|f_\delta(\sigma)|+\sigma|f_\delta'(\sigma)|\le C_\delta\sigma^{q-2},\quad\sigma>0.
\]
This proves \eqref{eq:gdelta-derivative} for $z\ne0$. Moreover,
\[
\frac{|g_\delta(z)|}{|z|}=|f_\delta(|z|)|\le C_\delta|z|^{q-2}\rightarrow0\quad\text{as }z\to0.
\]
Thus $Dg_\delta(0)=0$. By \eqref{eq:gdelta-derivative},
the derivative is continuous at zero.
Applying the mean value formula on the segment from $y$ to $z$
and using $|y+t(z-y)|^{q-2}\le C(|y|^{q-2}+|z|^{q-2})$,
we obtain \eqref{eq:gdelta-lipschitz}.
\end{proof}

Define
\[
I_\delta(u)=\frac12\norm{u^+}_E^2-\frac12\norm{u^-}_E^2-\int_{\R^3}|x|^{-b}F_\delta(|u|)\,dx.
\]
By \eqref{eq:trunc-growth}, Lemmas~\ref{lem:weighted} and~\ref{lem:gdelta-C1}, and standard Nemytskii theory, we have $I_\delta\in C^2(E,\R)$, with
\[
I_\delta'(u)[\phi]=\ip{u^+}{\phi^+}_E-\ip{u^-}{\phi^-}_E-\Re\int_{\R^3}|x|^{-b}f_\delta(|u|)u\cdot\overline\phi\,dx.
\]
Writing
\[
\mathcal N_\delta(u)=\int_{\R^3}|x|^{-b}F_\delta(|u|)\,dx,
\]
we have
\begin{equation}\label{eq:N}
\norm{\mathcal N_\delta'(u)}_{E^*}\le C\norm{u}_E^{q-1},\quad\norm{\mathcal N_\delta''(u)}_{\mathscr L(E,E^*)}\le C\norm{u}_E^{q-2}.
\end{equation}
The variational construction below is carried out for
$I_\delta$, whose nonlinear term has subcritical growth.

\section{Constrained spectral reduction}\label{sec:reduction}

\subsection{Fiber maximization} We adapt the mass-preserving spectral reduction of Buffoni and
Jeanjean~\cite[Theorem~2.1]{Buffoni-1993} to the truncated
functional $I_\delta$ with the singular weight $|x|^{-b}$.
For positive spectral components in a fixed cone and with
sufficiently small mass, the weighted embedding and
the derivative bounds for $F_\delta$ yield uniform strict
concavity of the fiber functionals on suitable balls in $E^-$.
We prove that each fiber functional has a unique maximizer
and use these maximizers to define the reduced functional
for the subsequent minimization argument.


Set
\[
\begin{gathered}
S_a=\{u\in E:\norm{u}_2^2=a\},\quad S_a^+=S_a\cap E^+,\\
\mathcal C=\left\{v\in E^+:\norm{v}_E<\sqrt{2(m+1)}\norm{v}_2\right\}.
\end{gathered}
\]
For $v\in\mathcal C\setminus\{0\}$, define
\[
\mathcal B(v)=\left\{w\in E^-:\norm{w}_E\le\frac{\sqrt m}{2}\norm{v}_2\right\},\quad
h(v,w)=\frac{\sqrt{\norm{v}_2^2-\norm{w}_2^2}}{\norm{v}_2}v+w.
\]
By \eqref{eq:normequiv}, every $w\in\mathcal B(v)$ satisfies $\norm{w}_2\le\frac{\norm{v}_2}{2}$, which implies that $h$ is well defined and $\norm{h(v,w)}_2=\norm{v}_2$.

\begin{lemma}\label{lem:fiber}
There exists $a_0>0$ such that, for every $v\in\mathcal C$ with $0<\norm{v}_2^2<a_0$, the fiber functional $\Phi_v(w):=I_\delta(h(v,w))$ on $\mathcal B(v)$ has a unique maximizer $w_\delta(v)$ in the interior of $\mathcal B(v)$. Moreover, the maps $v\mapsto w_\delta(v)$ and
\[
G_\delta(v):=h(v,w_\delta(v))
\]
are of class $C^1$ on $\{v\in\mathcal C:0<\norm{v}_2^2<a_0\}$.
\end{lemma}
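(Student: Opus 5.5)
The plan is to show that, for small mass, $\Phi_v$ is uniformly strictly concave on $\mathcal B(v)$ and strictly decreasing near $\partial\mathcal B(v)$. Existence and uniqueness of an interior maximizer then follow, and the implicit function theorem gives the $C^1$ dependence. Write $s=\norm{v}_2$ and $\tau(w)=\sqrt{s^2-\norm{w}_2^2}/s$, so $h(v,w)=\tau(w)v+w$. Since $\norm{w}_2\le s/2$ on $\mathcal B(v)$, the function $\tau$ is smooth there with $\tau\in[\sqrt3/2,1]$. Because $v\in E^+$ and $w\in E^-$, we have $h^+=\tau v$ and $h^-=w$. Hence
\[
\Phi_v(w)=\frac{\tau(w)^2}{2}\norm{v}_E^2-\frac12\norm{w}_E^2-\mathcal N_\delta(\tau(w)v+w),\qquad \tau(w)^2=1-\frac{\norm{w}_2^2}{s^2}.
\]
The first term therefore equals $\frac12\norm{v}_E^2-\frac{\norm{v}_E^2}{2s^2}\norm{w}_2^2$. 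This is a concave quadratic in $w$. Since $v\in\mathcal C$, its coefficient satisfies $\norm{v}_E^2/s^2<2(m+1)$, and this bound will only matter in the derivative estimates.

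\textbf{Concavity.} The smooth part contributes a second derivative $-\norm{\phi}_E^2-\frac{\norm{v}_E^2}{s^2}\norm{\phi}_2^2\le-\norm{\phi}_E^2$. For the nonlinear part, differentiate $\mathcal N_\delta(h(v,w))$ twice using the chain rule. We need $D_wh[\phi]=\tau'(w)[\phi]v+\phi$ and $D^2_wh[\phi,\phi]=\tau''(w)[\phi,\phi]v$. A direct computation gives $|\tau'(w)[\phi]|\le Cs^{-2}\norm{w}_2\norm{\phi}_2\le Cs^{-1}\norm{\phi}_2$ and $|\tau''[\phi,\phi]|\le Cs^{-2}\norm{\phi}_2^2$. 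Combined with $\norm{v}_E\le Cs$ on $\mathcal C$ and $\norm{\phi}_2\le m^{-1/2}\norm{\phi}_E$, these give $\norm{D_wh[\phi]}_E\le C\norm{\phi}_E$ and $\norm{D^2_wh[\phi,\phi]}_E\le Cs^{-1}\norm{\phi}_E^2$. Also $\norm{h}_E\le Cs$. Using \eqref{eq:N}, the second derivative of $\mathcal N_\delta\circ h$ is then bounded by
\[
C\norm{h}_E^{q-2}\norm{\phi}_E^2+C\norm{h}_E^{q-1}s^{-1}\norm{\phi}_E^2\le C s^{q-2}\norm{\phi}_E^2.
\]
Since $q>2$, for $s^2<a_0$ small we get $D^2\Phi_v(w)[\phi,\phi]\le-\frac12\norm{\phi}_E^2$ uniformly on $\mathcal B(v)$. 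So $\Phi_v$ is strictly concave on the closed convex set $\mathcal B(v)$. It is also weakly upper semicontinuous: it is concave and continuous on a bounded closed convex subset of a Hilbert space. Hence a unique maximizer exists.

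\textbf{Interior location.} This is the step I expect to be most delicate, since it must hold uniformly over the cone. Compare $\Phi_v(w)$ with $\Phi_v(0)$ via $\Phi_v(w)-\Phi_v(0)\le D\Phi_v(0)[w]-\frac14\norm{w}_E^2$. The derivative at $0$ has no quadratic contribution, because $\tau'(0)=0$. It reduces to $-\mathcal N_\delta'(v)[w]$, and by \eqref{eq:N} this is bounded by $C\norm{v}_E^{q-1}\norm{w}_E\le Cs^{q-1}\norm{w}_E$. On $\partial\mathcal B(v)$ we have $\norm{w}_E=\frac{\sqrt m}{2}s$, so
\[
\Phi_v(w)-\Phi_v(0)\le Cs^{q}-\frac{m}{16}s^2<0
\]
for $s$ small. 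Thus the maximizer lies in the interior, and in fact $\norm{w_\delta(v)}_E\le Cs^{q-1}$, which is useful later.

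\textbf{Regularity.} The map $(v,w)\mapsto\Phi_v(w)$ is $C^2$ on the open set $\{v\in\mathcal C:0<\norm{v}_2^2<a_0\}\times\{w\in E^-:\norm{w}_2<\norm{v}_2\}$, because $I_\delta\in C^2$ and $h$ is smooth there. At an interior maximizer, $D_w\Phi_v(w_\delta(v))=0$ in $(E^-)^*$. The operator $D_w^2\Phi_v$ is negative definite with the uniform bound $-\frac12$, so by Lax--Milgram it is an isomorphism $E^-\to(E^-)^*$. The implicit function theorem then gives a locally defined $C^1$ solution map near each $v$. By uniqueness of the maximizer, and since interior critical points of a strictly concave function are its maximizer, this local map coincides with $w_\delta$. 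Therefore $w_\delta$ is $C^1$, and so is $G_\delta=h(\cdot,w_\delta(\cdot))$.
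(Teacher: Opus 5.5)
Your proof is correct and follows the paper's route. The chain-rule bounds $\norm{D_wh[\phi]}_E\le C\norm{\phi}_E$ and $\norm{D_w^2h[\phi,\phi]}_E\le Cs^{-1}\norm{\phi}_E^2$, combined with \eqref{eq:N}, give uniform strict concavity on $\mathcal B(v)$. The implicit function theorem with Lax--Milgram then gives $C^1$ dependence.

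The differences are local. You get existence from weak upper semicontinuity, where the paper uses a midpoint Cauchy argument. You get interiority from the Taylor bound $\Phi_v(w)-\Phi_v(0)\le D\Phi_v(0)[w]-\frac14\norm{w}_E^2$, where the paper uses $\mathcal N_\delta\ge0$ and $\mathcal N_\delta(v)\le Cs^q$. Your version also yields $\norm{w_\delta(v)}_E\le Cs^{q-1}$ directly (cf.\ Lemma~\ref{lem:w-estimate}).

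In the last step, state explicitly that, by continuity, the implicit-function solution $\widetilde w(v')$ stays in the interior of $\mathcal B(v')$. Strict concavity is only known on $\mathcal B(v')$, and this is what lets you identify $\widetilde w(v')$ with $w_\delta(v')$.
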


\begin{proof}
Fix $v$ as in the statement. Set $\ell=\norm{v}_2$ and $t(w)=\sqrt{1-\frac{\norm{w}_2^2}{\ell^2}}$. For $w\in\mathcal B(v)$,
$\norm{w}_2\le\frac{\ell}{2}$ and hence $t(w)\ge\frac{\sqrt3}{2}$. Since $v\in\mathcal C$ and $\norm{w}_E\le\sqrt m\,\frac{\ell}{2}$, we also have
\[
\norm{h(v,w)}_E\le\norm{v}_E+\norm{w}_E\le C\ell.
\]
For every $z\in E^-$, we deduce that
\begin{align*}
D_wh(v,w)[z]&=z-\frac{\ip{w}{z}_{2}}{\ell\sqrt{\ell^2-\norm{w}_2^2}}v,\\
D_w^2h(v,w)[z,z]&=-\left(\frac{\norm{z}_2^2}{\ell\sqrt{\ell^2-\norm{w}_2^2}}+\frac{\ip{w}{z}_{2}^2}
{\ell(\ell^2-\norm{w}_2^2)^{\frac32}}\right)v.
\end{align*}
Consequently,
\[
\norm{D_wh(v,w)[z]}_E\le C\norm{z}_E,\quad\norm{D_w^2h(v,w)[z,z]}_E\le C\ell^{-1}\norm{z}_E^2.
\]

Define
\[
Q_v(w):=\frac12\left(1-\frac{\norm{w}_2^2}{\ell^2}\right)\norm{v}_E^2-\frac12\norm{w}_E^2.
\]
Then
$\Phi_v(w)=Q_v(w)-\mathcal N_\delta(h(v,w))$, and
\[
D_w^2Q_v(w)[z,z]=-\norm{z}_E^2-\frac{\norm{v}_E^2}{\ell^2}\norm{z}_2^2.
\]
Set $u=h(v,w)$. Applying the chain rule, we obtain
\[
D_w^2(\mathcal N_\delta\circ h)(v,w)[z,z]={}\mathcal N_\delta''(u)[D_wh(v,w)[z],D_wh(v,w)[z]]+\mathcal N_\delta'(u)[D_w^2h(v,w)[z,z]].
\]
By \eqref{eq:N} and the preceding estimates,
\[
\left|\mathcal N_\delta''(u)[D_wh(v,w)[z],D_wh(v,w)[z]]\right|
+\left|\mathcal N_\delta'(u)[D_w^2h(v,w)[z,z]]\right|\le C\ell^{q-2}\norm{z}_E^2.
\]
Therefore,
\[
D_w^2\Phi_v(w)[z,z]\le-\left(1-C\ell^{q-2}\right)\norm{z}_E^2.
\]
The constants above are uniform for $v\in\mathcal C$ and $w\in\mathcal B(v)$. Since $q>2$, we may choose $a_0>0$ sufficiently
small so that
\begin{equation}\label{eq:concave}
D_w^2\Phi_v(w)[z,z]\le-\frac12\norm{z}_E^2,\quad w\in\mathcal B(v),\quad z\in E^-.
\end{equation}

Since $F_\delta\ge0$, we have $\Phi_v(w)\le\frac{\norm{v}_E^2}{2}$ on $\mathcal B(v)$. Thus $M_v:=\sup\limits_{\mathcal B(v)}\Phi_v<\infty$. Let $\{w_n\}\subset\mathcal B(v)$ be a maximizing sequence. Since $\mathcal B(v)$ is convex, $\frac{w_n+w_k}{2}\in\mathcal B(v)$. Applying \eqref{eq:concave} along the segment joining $w_n$ and $w_k$, we obtain
\[
\Phi_v\left(\frac{w_n+w_k}{2}\right)\ge\frac{\Phi_v(w_n)+\Phi_v(w_k)}2+\frac1{16}\norm{w_n-w_k}_E^2.
\]
The left-hand side is at most $M_v$, whereas $\Phi_v(w_n)\to M_v$. Hence $\{w_n\}$ is Cauchy in $E^-$. Since $\mathcal B(v)$ is closed, there exists $\bar w\in\mathcal B(v)$ such that $w_n\to\bar w$ in $E^-$. By continuity, $\Phi_v(\bar w)=M_v$, and the uniqueness of the maximizer follows from \eqref{eq:concave}.

We next show that $\bar w$ lies in the interior of $\mathcal B(v)$. If $w\in\partial\mathcal B(v)$, then $\norm{w}_E^2=\frac{m\ell^2}{4}$ and
\[
\Phi_v(0)-\Phi_v(w)=\frac{\norm{w}_2^2}{2\ell^2}\norm{v}_E^2+\frac12\norm{w}_E^2
+\mathcal N_\delta(h(v,w))-\mathcal N_\delta(v)\ge\frac12\norm{w}_E^2-\mathcal N_\delta(v).
\]
For $v\in\mathcal C$, we have $\mathcal N_\delta(v)\le C\mathcal A_q(v)\le C\ell^q$ by \eqref{eq:trunc-growth} and Lemma~\ref{lem:weighted}. After decreasing $a_0$ if necessary,
\[
\Phi_v(0)-\Phi_v(w)\ge\frac m8\ell^2-C\ell^q\ge\frac m{16}\ell^2>0.
\]
Thus $\Phi_v(w)<\Phi_v(0)$ on $\partial\mathcal B(v)$. It follows that $\bar w$ lies in the interior of $\mathcal B(v)$ and satisfies $D_w\Phi_v(\bar w)=0$. Set $w_\delta(v):=\bar w$ and $G_\delta(v):=h(v,w_\delta(v))$.

We finally prove that the maps $w_\delta$ and $G_\delta$ are of class $C^1$. Consider the open set
\[
\Omega:=\left\{(v,w)\in(E^+\setminus\{0\})\times E^-:\norm{w}_2<\norm{v}_2\right\}.
\]
The map
\[
h(v,w)=\sqrt{1-\frac{\norm{w}_2^2}{\norm{v}_2^2}}\,v+w
\]
is of class $C^2$ on $\Omega$. Since $I_\delta\in C^2(E,\R)$, the map $\Psi:\Omega\to(E^-)^*$ defined by $\Psi(v,w):=D_w\Phi_v(w)$ is of class $C^1$.

At $w=w_\delta(v)$, define
\[
\mathfrak b_v[\xi,z]:=-\left\langle D_w\Psi(v,w)[\xi],z\right\rangle=-D_w^2\Phi_v(w)[\xi,z].
\]
This bilinear form is continuous on $E^-\times E^-$ and, by \eqref{eq:concave}, satisfies $\mathfrak b_v[\xi,\xi]\ge\frac12\norm{\xi}_E^2$.
Applying Lax--Milgram theorem, we conclude that $D_w\Psi(v,w)$ is an isomorphism from $E^-$ onto $(E^-)^*$. From the implicit function theorem, we obtain a neighbourhood $U$ of $v$ in $E^+\setminus\{0\}$ and a $C^1$ map $v'\mapsto\widetilde w(v')$ such that $\Psi(v',\widetilde w(v'))=0$ and $\widetilde w(v)=w_\delta(v)$. 

Since $w_\delta(v)$ lies in the interior of $\mathcal B(v)$,
\[
\norm{w_\delta(v)}_E<\frac{\sqrt m}{2}\norm{v}_2.
\]
After shrinking $U$ if necessary, we have $\norm{\widetilde w(v')}_E<\frac{\sqrt m}{2}\norm{v'}_2$ for all $v'\in U$. Hence, whenever $v'\in U\cap\mathcal C$ and $0<\norm{v'}_2^2<a_0$, $\widetilde w(v')$ lies in the interior of $\mathcal B(v')$. By the strict concavity of $\Phi_{v'}$, we have $\widetilde w(v')=w_\delta(v')$. Thus $w_\delta$ and $G_\delta$ are of class $C^1$ on $\{v\in\mathcal C:0<\norm{v}_2^2<a_0\}$. 
\end{proof}

\begin{lemma}\label{lem:w-estimate}
For every $v\in\mathcal C$ with $\ell=\norm{v}_2$ and $0<\ell^2<a_0$, we have
\begin{equation}\label{eq:w-estimate}
\norm{w_\delta(v)}_E\le C\mathcal A_q(v)^{\frac{q-1}{q}}\le C\ell^{q-1}.
\end{equation}
\end{lemma}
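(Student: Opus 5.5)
We compare the maximum value of the fiber functional with its value at $w=0$, using the uniform strict concavity from \eqref{eq:concave}. Write $\bar w=w_\delta(v)$, and recall from Lemma~\ref{lem:fiber} that $\bar w$ lies in the interior of $\mathcal B(v)$ and that $\Phi_v$ is a $C^2$ function on the convex set $\mathcal B(v)$, which contains $0$. Since $D_w\Phi_v(\bar w)=0$, we expand $\Phi_v$ along the segment $s\mapsto s\bar w$, $s\in[0,1]$, which stays in $\mathcal B(v)$. Taylor's formula with the concavity bound \eqref{eq:concave} then gives
\[
\Phi_v(\bar w)-\Phi_v(0)\ge\frac14\norm{\bar w}_E^2 .
\]
This is quadratic concavity measured from the maximizer. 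Equivalently, $\Phi_v(0)\le\Phi_v(\bar w)-\frac14\|\bar w\|_E^2$.

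Next we bound $\Phi_v(\bar w)-\Phi_v(0)$ from above, using the explicit structure $\Phi_v=Q_v-\mathcal N_\delta\circ h$. We have
\[
\Phi_v(\bar w)-\Phi_v(0)=-\frac{\norm{\bar w}_2^2}{2\ell^2}\norm{v}_E^2-\frac12\norm{\bar w}_E^2-\mathcal N_\delta(h(v,\bar w))+\mathcal N_\delta(v).
\]
Both quadratic terms are nonpositive, and $\mathcal N_\delta\ge0$ by \eqref{eq:trunc-convex}. Hence
\[
\Phi_v(\bar w)-\Phi_v(0)\le\mathcal N_\delta(v)\le C\mathcal A_q(v)
\]
by \eqref{eq:trunc-growth}. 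Combining this with the lower bound gives only $\norm{\bar w}_E\le C\mathcal A_q(v)^{1/2}$. That is weaker than the claimed exponent $\frac{q-1}{q}$, because $\mathcal A_q(v)\lesssim\ell^q$ is small.

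To reach the sharp exponent we use the Euler--Lagrange equation $D_w\Phi_v(\bar w)[\bar w]=0$ instead, in the spirit of the standard reduction estimate. Strict concavity gives
\[
\bigl(D_w\Phi_v(0)-D_w\Phi_v(\bar w)\bigr)[\bar w]\ge\frac12\norm{\bar w}_E^2,
\]
so $\frac12\norm{\bar w}_E^2\le D_w\Phi_v(0)[\bar w]$. At $w=0$ we have $D_wh(v,0)[z]=z$ and $D_wQ_v(0)=0$. Therefore
\[
D_w\Phi_v(0)[\bar w]=-\mathcal N_\delta'(v)[\bar w]=-\Re\int|x|^{-b}f_\delta(|v|)v\cdot\overline{\bar w}\,dx .
\]
Using $|f_\delta(|v|)v|\le C|v|^{q-1}$ and H\"older's inequality in $L^q_b$,
\[
\bigl|D_w\Phi_v(0)[\bar w]\bigr|\le C\norm{v}_{L^q_b}^{q-1}\norm{\bar w}_{L^q_b}\le C\mathcal A_q(v)^{\frac{q-1}{q}}\norm{\bar w}_E ,
\]
where the last step uses Lemma~\ref{lem:weighted}. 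Dividing by $\norm{\bar w}_E$ gives the first inequality in \eqref{eq:w-estimate}. The second inequality follows from
\[
\mathcal A_q(v)\le C\norm{v}_E^q\le C\ell^q,
\]
since $v\in\mathcal C$ gives $\norm{v}_E<\sqrt{2(m+1)}\,\ell$.

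The one point needing care is the monotonicity inequality, because \eqref{eq:concave} was established on all of $\mathcal B(v)$, including the segment from $0$ to $\bar w$. We integrate $D^2_w\Phi_v(s\bar w)[\bar w,\bar w]$ over $s\in[0,1]$, which yields exactly the derivative difference above. We also need $D_w\Phi_v(0)$ to involve no contribution from $D_wh$ beyond the identity, which holds because $\ip{0}{z}_2=0$.
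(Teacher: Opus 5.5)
Your proof is correct, and its decisive part is the paper's own argument: you test $D_w\Phi_v(\bar w)=0$ against $\bar w$ and use \eqref{eq:concave} along the segment from $0$ to $\bar w$ to get $\frac12\norm{\bar w}_E^2\le D_w\Phi_v(0)[\bar w]$. You then bound $D_w\Phi_v(0)=-\mathcal N_\delta'(v)$ on $E^-$ by H\"older's inequality in $L^q_b$ and Lemma~\ref{lem:weighted}; the preliminary energy comparison giving $\norm{\bar w}_E\le C\mathcal A_q(v)^{1/2}$ is correct but unnecessary and can be dropped.
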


\begin{proof}
Set $w=w_\delta(v)$. By \eqref{eq:concave} and $D_w\Phi_v(w)=0$, we have
\[
0=D_w\Phi_v(w)[w]\le D_w\Phi_v(0)[w]-\frac12\norm{w}_E^2,
\]
which implies
\[
\frac12\norm{w}_E^2\le D_w\Phi_v(0)[w]\le\norm{D_w\Phi_v(0)}_{(E^-)^*}\norm{w}_E.
\]
At $w=0$,
\[
D_w\Phi_v(0)[z]=-\Re\int_{\R^3}|x|^{-b}f_\delta(|v|)v\cdot\overline z\,dx.
\]
Using $|f_\delta(\sigma)\sigma|\le C_\delta\sigma^{q-1}$, H\"{o}lder's inequality with respect to $|x|^{-b}\,dx$, and Lemma~\ref{lem:weighted}, we obtain
\[
\norm{D_w\Phi_v(0)}_{(E^-)^*}\le C\mathcal A_q(v)^{\frac{q-1}{q}}.
\]
Hence
\[
\norm{w}_E\le C\mathcal A_q(v)^{\frac{q-1}{q}}.
\]
Since $v\in\mathcal C$, we have $\norm{v}_E\le C\ell$. It follows from \eqref{eq:weighted} and the above estimates that
\[
\norm{w_\delta(v)}_E\le C\mathcal A_q(v)^{\frac{q-1}{q}}\le C\norm{v}_E^{q-1}\le C\ell^{q-1}.
\]
\end{proof}

\subsection{The reduced functional and the lifting estimate}
For $v\in\mathcal C$ with $0<\norm{v}_2^2<a_0$, we define
the reduced functional by
\[
J_\delta(v)=I_\delta(G_\delta(v)).
\]
Fix $a\in(0,a_0)$ for the remainder of this section.
For $u\in S_a$ with $u^+\ne0$, set
\[
\kappa_\delta(u)
=
\frac{
\norm{u^+}_E^2
-\Re\displaystyle\int_{\R^3}
|x|^{-b}f_\delta(|u|)u\cdot\overline{u^+}\,dx
}{
\norm{u^+}_2^2
},
\]
and
\[
\mathcal H_\delta(u)
=
H_0u-|x|^{-b}f_\delta(|u|)u-\kappa_\delta(u)u
\in E^*.
\]
By the definition of $\kappa_\delta(u)$, we have
\[
\langle\mathcal H_\delta(u),u^+\rangle=0.
\]

For $v\in S_a^+\cap\mathcal C$, write
\[
w=w_\delta(v),\quad
t=\sqrt{1-\frac{\norm{w}_2^2}{a}},\quad
u=G_\delta(v)=tv+w.
\]
Then $t\ge\frac{\sqrt3}{2}$. From $D_w\Phi_v(w)=0$
and $\langle\mathcal H_\delta(u),u^+\rangle=0$, we obtain
\[
\langle\mathcal H_\delta(u),z\rangle=0
\quad\text{for every }z\in E^-.
\]

For $v\in S_a^+$, let
\[
T_vS_a^+
=
\left\{\xi\in E^+:\ip{v}{\xi}_{2}=0\right\}
\]
denote the tangent space of $S_a^+$ at $v$.
We next estimate $\mathcal H_\delta(G_\delta(v))$ in terms of the derivative of $J_\delta$ on $S_a^+$.

\begin{lemma}\label{lem:lifting}
There exists a constant $C>0$, independent of $a\in(0,a_0)$ and $v\in S_a^+\cap\mathcal C$, such that
\begin{equation}\label{eq:lifting}
\norm{\mathcal H_\delta(G_\delta(v))}_{E^*}\le C\left\|d\bigl(J_\delta|_{S_a^+}\bigr)(v)\right\|_{(T_vS_a^+)^*}.
\end{equation}
\end{lemma}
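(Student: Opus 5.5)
The plan is to compute $d(J_\delta|_{S_a^+})(v)$ explicitly by an envelope argument, then split an arbitrary test function $\phi\in E$ into pieces on which $\mathcal H_\delta(u)$ is already known to vanish, plus a tangent vector.

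Fix $v\in S_a^+\cap\mathcal C$ and write $w=w_\delta(v)$, $t=\sqrt{1-\norm{w}_2^2/a}$ and $u=G_\delta(v)=tv+w$. Since $\mathcal C$ is open in $E^+$, $J_\delta$ is $C^1$ near $v$ by Lemma~\ref{lem:fiber}. For $\xi\in T_vS_a^+$ the chain rule gives
\[
dJ_\delta(v)[\xi]=I_\delta'(u)\bigl[D_vh(v,w)[\xi]\bigr]+D_w\Phi_v(w)\bigl[w_\delta'(v)[\xi]\bigr].
\]
The second term vanishes because $w$ is an interior critical point of $\Phi_v$.

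Next I compute $D_vh(v,w)[\xi]$ for $\xi$ tangent. Write $h(v,w)=\sqrt{\norm{v}_2^2-\norm{w}_2^2}\,\norm{v}_2^{-1}v+w$. The derivative of $\norm{v}_2$ in direction $\xi$ is $\ip{v}{\xi}_2/\norm{v}_2=0$. Hence the scalar prefactor has zero derivative along $\xi$, and $D_vh(v,w)[\xi]=t\xi$. This gives $dJ_\delta(v)[\xi]=t\,I_\delta'(u)[\xi]$.

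Now I rewrite $I_\delta'(u)$ using $\mathcal H_\delta$:
\[
I_\delta'(u)[\phi]=\langle\mathcal H_\delta(u),\phi\rangle+\kappa_\delta(u)\ip{u}{\phi}_2 .
\]
For $\xi\in T_vS_a^+$ we have $\ip{u}{\xi}_2=t\ip{v}{\xi}_2+\ip{w}{\xi}_2=0$. Indeed, the first term vanishes by tangency, and the second because $L^+\perp L^-$ in $L^2$. Therefore
\[
\langle\mathcal H_\delta(u),\xi\rangle=t^{-1}dJ_\delta(v)[\xi],\qquad \xi\in T_vS_a^+ .
\]

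For the decomposition, take an arbitrary $\phi\in E$ and write
\[
\phi=\phi^-+sv+\xi,\qquad s=\frac{\ip{\phi^+}{v}_2}{a},\qquad \xi=\phi^+-sv\in T_vS_a^+ .
\]
We already know $\langle\mathcal H_\delta(u),z\rangle=0$ for all $z\in E^-$. We also know $\langle\mathcal H_\delta(u),v\rangle=t^{-1}\langle\mathcal H_\delta(u),u^+\rangle=0$, since $u^+=tv$. Hence $\langle\mathcal H_\delta(u),\phi\rangle=t^{-1}dJ_\delta(v)[\xi]$.

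It remains to bound $\norm{\xi}_E$ uniformly. Using \eqref{eq:normequiv} and $v\in\mathcal C$, so that $\norm{v}_E\le\sqrt{2(m+1)}\sqrt a$, we get
\[
\norm{\xi}_E\le\norm{\phi}_E+\frac{\norm{\phi}_2\sqrt a}{a}\,\sqrt{2(m+1)}\sqrt a\le\Bigl(1+\sqrt{2(m+1)/m}\Bigr)\norm{\phi}_E .
\]
Combining this with $t\ge\sqrt3/2$ gives \eqref{eq:lifting} with $C=\frac{2}{\sqrt3}\bigl(1+\sqrt{2(m+1)/m}\bigr)$. This constant is independent of $a$ and $v$.

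The step I expect to need the most care is the first one. There one must justify that $J_\delta$ restricted to $S_a^+$ is differentiated via curves staying in $\mathcal C\cap\{0<\norm{\cdot}_2^2<a_0\}$, which holds by openness. One must also check that the $v$-dependence of the normalizing factor in $h$ contributes nothing along tangent directions. The $C^1$ regularity of $w_\delta$ and $G_\delta$ from Lemma~\ref{lem:fiber} makes this legitimate. The crucial point is that the $a$-scaling in $s$ cancels against $\norm{v}_2\norm{v}_E\sim a$, which is exactly what the cone condition defining $\mathcal C$ provides.
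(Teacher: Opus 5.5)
Your proposal is correct and follows essentially the paper's argument: the same splitting $\phi=\phi^-+sv+\xi$ with $\xi\in T_vS_a^+$, the same use of $\langle\mathcal H_\delta(u),v\rangle=0$ and of the vanishing of $\mathcal H_\delta(u)$ on $E^-$, and the same bound $\norm{\xi}_E\le C\norm{\phi}_E$ from the cone condition defining $\mathcal C$. The only cosmetic difference is in how you get $d\bigl(J_\delta|_{S_a^+}\bigr)(v)[\xi]=t\,\langle\mathcal H_\delta(u),\xi\rangle$. You use the envelope identity $D_vh(v,w)[\xi]=t\xi$ and kill the $w_\delta'(v)[\xi]$ term by $D_w\Phi_v(w)=0$, whereas the paper differentiates $\norm{G_\delta(v)}_2^2=a$ and $(G_\delta(v))^+=tv$ to obtain $(G_\delta'(v)[\xi])^+=t\xi+c_\xi v$.
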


\begin{proof}
Set $u=G_\delta(v)=tv+w$, where $w=w_\delta(v)$ and $t=\sqrt{1-\frac{\norm{w}_2^2}{a}}$. Let $\kappa=\kappa_\delta(u)$ and define $\mathscr L_u\in E^*$ by
\[
\langle\mathscr L_u,\phi\rangle=\langle I_\delta'(u),\phi\rangle-\kappa\ip{u}{\phi}_{2},\quad \phi\in E.
\]
Then $\mathscr L_u=\mathcal H_\delta(u)$. By the definition of $\kappa_\delta(u)$, we deduce that
$\langle\mathscr L_u,u^+\rangle=0$. Since $u^+=tv$ and $t>0$, it follows that $\langle\mathscr L_u,v\rangle=0$.

Since $w$ is an interior maximizer of the fiber functional $\Phi_v$, we have $D_w\Phi_v(w)=0$, and hence
\[
\left\langle I_\delta'(u),D_wh(v,w)[z]\right\rangle=0\quad\text{for every }z\in E^-,
\]
where
\[
D_wh(v,w)[z]=z-\frac{\ip{w}{z}_2}{at}v.
\]
By the $L^2$-orthogonality of $E^+$ and $E^-$,
\[
\ip{u}{D_wh(v,w)[z]}_2=\ip{w}{z}_2-\frac{\ip{w}{z}_2}{at}\ip{tv}{v}_2=0.
\]
Thus $\langle\mathscr L_u,D_wh(v,w)[z]\rangle=0$. Since $\langle\mathscr L_u,v\rangle=0$, we obtain
$\langle\mathscr L_u,z\rangle=0$ for every $z\in E^-$.

Let $\xi\in T_vS_a^+$. Differentiating
$\norm{G_\delta(v)}_2^2=a$ and $(G_\delta(v))^+=tv$,
we deduce that
\[
\ip{u}{G_\delta'(v)[\xi]}_2=0,
\quad
\bigl(G_\delta'(v)[\xi]\bigr)^+
=t\xi+c_\xi v,
\quad c_\xi\in\R.
\] 
Since $\langle\mathscr L_u,v\rangle=0$ and $\langle\mathscr L_u,z\rangle=0$ for every $z\in E^-$, we have
$\langle\mathscr L_u,G_\delta'(v)[\xi]\rangle=t\langle\mathscr L_u,\xi\rangle$.
By the chain rule and $\ip{u}{G_\delta'(v)[\xi]}_2=0$, we obtain
\[
d\bigl(J_\delta|_{S_a^+}\bigr)(v)[\xi]=\langle I_\delta'(u),G_\delta'(v)[\xi]\rangle=t\langle\mathscr L_u,\xi\rangle.
\]

For arbitrary $\phi\in E$, write
\[
\phi^+=\xi+cv,\quad c=\frac{\ip{v}{\phi^+}_2}{a},\quad\xi=\phi^+-cv\in T_vS_a^+.
\]
Thus $\langle\mathscr L_u,\phi\rangle=\langle\mathscr L_u,\xi\rangle$. Moreover, $v\in S_a^+\cap\mathcal C$ satisfies
$\norm{v}_2=\sqrt a$ and
$\norm{v}_E\le\sqrt{2(m+1)a}$. By \eqref{eq:normequiv},
$\norm{\phi^+}_2\le m^{-\frac12}\norm{\phi}_E$. Consequently,
\[
|c|\norm{v}_E\le\frac{\norm{v}_2\norm{\phi^+}_2}{a}\norm{v}_E\le C\norm{\phi}_E,
\]
and hence
\[
\norm{\xi}_E\le\norm{\phi^+}_E+|c|\norm{v}_E\le C\norm{\phi}_E.
\]
Since $t\ge\frac{\sqrt3}{2}$, we have
\begin{align*}
\left|\langle\mathscr L_u,\phi\rangle\right|&=\frac1t\left|d\bigl(J_\delta|_{S_a^+}\bigr)(v)[\xi]\right|\\
&\le\frac2{\sqrt3}\left\|d\bigl(J_\delta|_{S_a^+}\bigr)(v)\right\|_{(T_vS_a^+)^*}\norm{\xi}_E\\
&\le C\left\|d\bigl(J_\delta|_{S_a^+}\bigr)(v)\right\|_{(T_vS_a^+)^*}\norm{\phi}_E.
\end{align*}
Taking the supremum over all $\phi\in E$ with $\norm{\phi}_E\le1$, we obtain \eqref{eq:lifting}.
\end{proof}

\section{Existence for the truncated problem}\label{sec:truncated}
We now seek a minimizer of $J_\delta$ on a closed subset
of $S_a^+\cap\mathcal C$. We first show that its infimum
is strictly below the values on the boundary of this set,
so that minimizing sequences eventually lie in its interior.

For $\chi>0$ set
\[
\mathcal V_\chi^+=\{v\in E^+:\norm{v}_E^2<\chi\},\quad\overline{\mathcal V}_\chi^+=\{v\in E^+:\norm{v}_E^2\le\chi\},
\quad\Theta_a=2(m+1)a,
\]
so that
\[
S_a^+\cap\mathcal C=S_a^+\cap\mathcal V_{\Theta_a}^+.
\]

\subsection{Energy bounds and the minimization set} 
The following lemma estimates $\mathcal A_r(v)$ in terms of the two-component function $\omega$ in the Foldy--Wouthuysen representation of $v$.


\begin{lemma}\label{lem:FW-low}
Let $\omega\in H^1(\R^3;\C^2)$ and $v= U_{\rm FW}^{-1}\begin{pmatrix}\omega \\0\end{pmatrix}\in E^+$. Then, for every $r\in(2,3-b]$,
\begin{equation}\label{eq:FW-weighted-upper}
\mathcal A_r(v) \le C\bigl(\mathcal A_r(\omega)+\norm{\nabla\omega}_2^r\bigr).
\end{equation}
\end{lemma}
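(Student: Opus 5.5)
Our plan is to write $v$ explicitly in Fourier variables and split it into a main part, essentially $(\omega,0)^{\mathsf T}$, plus a correction controlled by $\nabla\omega$. Since $U(\xi)^{-1}=U(\xi)^*=\bigl((m+\lambda)I_4-\beta\alpha\cdot\xi\bigr)/\sqrt{2\lambda(m+\lambda)}$, we have
\[
\widehat v(\xi)=\frac{1}{\sqrt{2\lambda(\xi)(m+\lambda(\xi))}}
\begin{pmatrix}(m+\lambda(\xi))\widehat\omega(\xi)\\ \boldsymbol\sigma\cdot\xi\,\widehat\omega(\xi)\end{pmatrix}.
\]
Accordingly we write $v=v_1+v_2$ with $v_1=(\omega,0)^{\mathsf T}$ and $v_2=v-v_1$. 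By the elementary inequality $|v|^r\le 2^{r-1}(|v_1|^r+|v_2|^r)$ we get $\mathcal A_r(v)\le C(\mathcal A_r(\omega)+\mathcal A_r(v_2))$. The first term is already of the required form, so it remains to show $\mathcal A_r(v_2)\le C\norm{\nabla\omega}_2^r$.

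For $v_2$ we use Lemma~\ref{lem:weighted}, which gives $\mathcal A_r(v_2)\le C_r\norm{v_2}_{H^{1/2}}^r$. Since this lemma also controls the homogeneous part, the natural target is a bound on $\norm{v_2}_{H^{1/2}}$ by $C\norm{\nabla\omega}_2$. The symbols of the two components of $v_2$ are
\[
s_1(\xi)=\sqrt{\frac{m+\lambda}{2\lambda}}-1,\qquad s_2(\xi)=\frac{\boldsymbol\sigma\cdot\xi}{\sqrt{2\lambda(m+\lambda)}},
\]
and it suffices to show $(1+|\xi|^2)^{1/2}|s_j(\xi)|^2\le C|\xi|^2$ for $j=1,2$.

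For $s_2$, note that $\|\boldsymbol\sigma\cdot\xi\|_{\mathrm{op}}=|\xi|$, so $|s_2|^2\le |\xi|^2/(2\lambda(m+\lambda))\le |\xi|^2/(2m\lambda)$. Then $(1+|\xi|^2)^{1/2}|s_2|^2\le C|\xi|^2$, because $(1+|\xi|^2)^{1/2}/\lambda$ is bounded.

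For $s_1$, we use $\sqrt{(m+\lambda)/(2\lambda)}=\rho_+\in(1/\sqrt2,1]$, so $|s_1|\le 1-\rho_+^2=(\lambda-m)/(2\lambda)$. Moreover $\lambda-m=|\xi|^2/(\lambda+m)$, which gives $|s_1|\le \min\{C|\xi|^2,1/2\}$. For $|\xi|\le 1$ this yields $(1+|\xi|^2)^{1/2}|s_1|^2\le C|\xi|^4\le C|\xi|^2$. For $|\xi|\ge1$, the identity $|s_1|=(\lambda-m)/(2\lambda)\cdot(\text{bounded factor})$ only gives $|s_1|^2\le C$. This produces $(1+|\xi|^2)^{1/2}|s_1|^2\le C|\xi|\le C|\xi|^2$, which is acceptable.

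This symbol estimate, and in particular the behaviour of $s_1$ at high frequency, is the only delicate step. Once it is in place, Plancherel gives $\norm{v_2}_{H^{1/2}}^2\le C\int|\xi|^2|\widehat\omega|^2\,d\xi=C\norm{\nabla\omega}_2^2$. Combining this with Lemma~\ref{lem:weighted} gives $\mathcal A_r(v_2)\le C\norm{\nabla\omega}_2^r$, and inserting this into the splitting proves \eqref{eq:FW-weighted-upper}.

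The hypothesis $\omega\in H^1$ ensures that all the norms above are finite and that $v\in E^+$. The constant $C$ depends only on $m$, $b$ and $r$.
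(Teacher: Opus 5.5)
Your proposal is correct and follows essentially the paper's proof: you write $\widehat v$ explicitly via $U(\xi)^{-1}$, bound the correction symbols by $C|\xi|^2$ against the weight $(1+|\xi|^2)^{1/2}$ so that the remainder has $H^{\frac12}$ norm at most $C\norm{\nabla\omega}_2$, and then apply Lemma~\ref{lem:weighted} together with an elementary pointwise inequality. The only cosmetic differences are that you split $v=(\omega,0)^{\mathsf T}+v_2$ as four-spinors rather than separating the upper and lower components, and that you bound $s_1=\rho_+-1$ by a case split in $|\xi|$, where the paper uses $(1-\rho_+)^2\le\rho_-^2$ and $\lambda\rho_-^2=\frac{\lambda-m}{2}\le\frac{|\xi|^2}{4m}$ directly.
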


\begin{proof}
Using $U_{\rm FW}=\mathcal F^{-1}U\mathcal F$
and the explicit formula for $U(\xi)$, we obtain
\[
\widehat v(\xi)
=U(\xi)^{-1}\binom{\widehat\omega(\xi)}{0}
=\begin{pmatrix}
\rho_+(\xi)\widehat\omega(\xi)\\
\rho_-(\xi)S(\xi)\widehat\omega(\xi)
\end{pmatrix},
\]
where
\[
S(\xi)=\frac{\boldsymbol{\sigma}\cdot\xi}{|\xi|}\quad\text{for }\xi\ne0,\quad S(0)=0.
\]
Since
$(\boldsymbol{\sigma}\cdot\xi)^2=|\xi|^2I_2$, the matrix $S(\xi)$ is unitary for $\xi\ne0$.
Let $\mathscr R\omega=\mathcal F^{-1}[(1-\rho_+)\widehat\omega]$. Then the upper two-component spinor of $v$ is $\omega-\mathscr R\omega$. Moreover,
\[
1-\rho_+(\xi)=\frac{\rho_-^2(\xi)}{1+\rho_+(\xi)}\le\rho_-^2(\xi),
\quad\lambda(\xi)\rho_-^2(\xi)=\frac{\lambda(\xi)-m}{2}\le\frac{|\xi|^2}{4m}.
\]
Since $0\le\rho_-^2(\xi)\le1$, we also have $(1-\rho_+(\xi))^2\le\rho_-^2(\xi)$. Hence, by Plancherel's theorem and $\lambda(\xi)\asymp(1+|\xi|^2)^{\frac12}$,
\[
\norm{\mathscr R\omega}_{H^{\frac12}}^2+\left\|\mathcal F^{-1}[\rho_-S\widehat\omega]\right\|_{H^{\frac12}}^2
\le C\norm{\nabla\omega}_2^2.
\]
By Lemma~\ref{lem:weighted}, we obtain
\[
\mathcal A_r(\mathscr R\omega)+\mathcal A_r\left(\mathcal F^{-1}[\rho_-S\widehat\omega]\right)\le C\norm{\nabla\omega}_2^r.
\]
Using $(A^2+B^2)^{\frac r2}\le C_r(A^r+B^r)$ for $A,B\ge0$ and
$|\omega-\mathscr R\omega|^r\le C_r(|\omega|^r+|\mathscr R\omega|^r)$, we conclude that
\[
\mathcal A_r(v)\le C\left[\mathcal A_r(\omega)+\mathcal A_r(\mathscr R\omega)+\mathcal A_r
\left(\mathcal F^{-1}[\rho_-S\widehat\omega]\right)\right]\le C\bigl(\mathcal A_r(\omega)+\norm{\nabla\omega}_2^r\bigr),
\]
which proves \eqref{eq:FW-weighted-upper}.
\end{proof}

The next estimate extends the argument of
\cite[Lemma~3.5]{WangChang2026} to the range
$2<r\leq 3-b$.
\begin{lemma}\label{lem:comparison}
Let $r\in(2,3-b]$, $v\in S_a^+$, and $\zeta\in E^-$ satisfy $\norm{\zeta}_2\le\frac{\sqrt a}{2}$. Set $t=\sqrt{1-\frac{\norm{\zeta}_2^2}{a}}$ and $u=tv+\zeta$. Then there exists a constant $C>0$, independent of $a$, $v$, and $\zeta$, such that
\begin{equation}\label{eq:comparison-v}
\mathcal A_r(u)\ge 2^{1-r}\mathcal A_r(v)-\frac{C}{a}\norm{\zeta}_2^2\norm{v}_E^r-C\norm{\zeta}_E^r.
\end{equation}
Moreover, if
$v=U_{\rm FW}^{-1}\binom{\omega}{0}$ for some $\omega\in H^1(\R^3;\C^2)$, then
\begin{equation}\label{eq:comparison-w}
\mathcal A_r(u)\ge 4^{1-r}\mathcal A_r(\omega)-C\norm{\nabla\omega}_2^r-\frac{C}{a}\norm{\zeta}_2^2\norm{v}_E^r
-C\norm{\zeta}_E^r.
\end{equation}
\end{lemma}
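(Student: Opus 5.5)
The plan is to use the elementary quasi-triangle inequality $|y|^r\le 2^{r-1}(|y+z|^r+|z|^r)$ pointwise. This gives $\mathcal A_r(y+z)\ge 2^{1-r}\mathcal A_r(y)-\mathcal A_r(z)$ for any measurable $y,z$. We apply it with $y=v$ and
\[
z=u-v=(t-1)v+\zeta .
\]
It yields
\[
\mathcal A_r(u)\ge 2^{1-r}\mathcal A_r(v)-\mathcal A_r\bigl((t-1)v+\zeta\bigr),
\]
and it remains to bound the error term from above.

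Using $|(t-1)v+\zeta|^r\le 2^{r-1}\bigl((1-t)^r|v|^r+|\zeta|^r\bigr)$ together with \eqref{eq:weighted}, the error is at most
\[
C(1-t)^r\norm{v}_E^r+C\norm{\zeta}_E^r .
\]
Next we control $1-t$. Since $t=\sqrt{1-s}$ with $s=\norm{\zeta}_2^2/a\in[0,\tfrac14]$, we have $1-t=\frac{s}{1+t}\le s$. Also $s\le1$ and $r>2$, so $(1-t)^r\le s^r\le s=\norm{\zeta}_2^2/a$. This produces exactly the term $\frac{C}{a}\norm{\zeta}_2^2\norm{v}_E^r$ and proves \eqref{eq:comparison-v}. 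The constants depend only on $r$ and the embedding constant $C_r$ of Lemma~\ref{lem:weighted}, so they are independent of $a$, $v$ and $\zeta$.

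For \eqref{eq:comparison-w}, we combine \eqref{eq:comparison-v} with a lower bound for $\mathcal A_r(v)$ in terms of $\mathcal A_r(\omega)$. As in the proof of Lemma~\ref{lem:FW-low}, the upper two-component spinor of $v$ is $\omega-\mathscr R\omega$. Since $|v|\ge|\omega-\mathscr R\omega|$ pointwise, the same quasi-triangle inequality gives
\[
\mathcal A_r(v)\ge\mathcal A_r(\omega-\mathscr R\omega)\ge 2^{1-r}\mathcal A_r(\omega)-\mathcal A_r(\mathscr R\omega).
\]
Here $\mathcal A_r(\mathscr R\omega)\le C\norm{\nabla\omega}_2^r$ by the $H^{1/2}$ bound $\norm{\mathscr R\omega}_{H^{1/2}}\le C\norm{\nabla\omega}_2$, established in that proof, combined with Lemma~\ref{lem:weighted}; this step applies the embedding to $\C^2$-valued functions, which the lemma allows. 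Substituting into \eqref{eq:comparison-v}, and using $2^{1-r}\cdot2^{1-r}=4^{1-r}$, yields \eqref{eq:comparison-w}.

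There is no real obstacle in this argument. The only points to check are that $(1-t)^r$ is dominated by $\norm{\zeta}_2^2/a$, which needs only $s\le1$ and $r\ge1$, and that the $H^{1/2}$ estimate for $\mathscr R\omega$ requires only $\nabla\omega\in L^2$, which holds since $\omega\in H^1$. The endpoint $r=3-b$ is allowed because Lemma~\ref{lem:weighted} gives continuity of the embedding there; compactness is never used.
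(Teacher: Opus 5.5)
Your proof is correct and follows the paper's argument. Both use the pointwise inequality $|A+B|^r\ge 2^{1-r}|A|^r-|B|^r$, Lemma~\ref{lem:weighted} for the error terms, and, for \eqref{eq:comparison-w}, the upper-spinor identity $\omega-\mathscr R\omega$ together with $\mathcal A_r(\mathscr R\omega)\le C\norm{\nabla\omega}_2^r$. The only cosmetic difference is the splitting: you write $u=v+\bigl((t-1)v+\zeta\bigr)$ and use $(1-t)^r\le\norm{\zeta}_2^2/a$, whereas the paper takes $A=tv$, $B=\zeta$ and uses $t^r\ge1-C\norm{\zeta}_2^2/a$.
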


\begin{proof}
Since $\frac{\norm{\zeta}_2^2}{a}\le\frac14$, we have $t^r\ge1-C\frac{\norm{\zeta}_2^2}{a}$. By the pointwise inequality $|A+B|^r\ge2^{1-r}|A|^r-|B|^r$
with $A=tv$ and $B=\zeta$, we have
\[
\mathcal A_r(u)\ge2^{1-r}\mathcal A_r(v)-C\frac{\norm{\zeta}_2^2}{a}\mathcal A_r(v)-\mathcal A_r(\zeta).
\]
Applying Lemma~\ref{lem:weighted} to $\mathcal A_r(v)$ and $\mathcal A_r(\zeta)$ in the two negative terms, we obtain
\eqref{eq:comparison-v}.

As in the proof of Lemma~\ref{lem:FW-low}, the upper two-component spinor of $v$ is $\omega-\mathscr R\omega$, and
$\mathcal A_r(\mathscr R\omega)\le C\norm{\nabla\omega}_2^r$. Hence
\[
\mathcal A_r(v)\ge\mathcal A_r(\omega-\mathscr R\omega)\ge2^{1-r}\mathcal A_r(\omega)-C\norm{\nabla\omega}_2^r.
\]
Substituting this estimate into \eqref{eq:comparison-v}, we deduce \eqref{eq:comparison-w}.
\end{proof}

Recall that $\gamma_q=\frac{3(q-2)}{2}+b$. Since $2<q<3-b$, we have
\begin{equation}\label{eq:gamma-range}
0<\gamma_q<2,\quad q>\gamma_q.
\end{equation}

\begin{lemma}\label{lem:annulus}
There exists $a_1\in(0,a_0)$ such that, for every $a\in(0,a_1)$,
\begin{equation}\label{eq:threshold-chain}
\inf_{v\in S_a^+\cap\mathcal V_{\frac{\Theta_a}{2}}^+}J_\delta(v)<\frac{ma}{2}<
\inf_{v\in S_a^+\cap(\mathcal V_{\Theta_a}^+\setminus\mathcal V_{\frac{3\Theta_a}{4}}^+)}J_\delta(v).
\end{equation}
\end{lemma}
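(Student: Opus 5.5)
We prove the two inequalities in \eqref{eq:threshold-chain} separately. Throughout we use $G_\delta(v)=tv+w$ with $w=w_\delta(v)$, and the maximality of $w$ in the fiber.

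\textbf{Left inequality.} Fiber maximality gives the upper bound $J_\delta(v)=\max_{\mathcal B(v)}\Phi_v\ge\Phi_v(0)$, but here we need an upper bound on $J_\delta(v)$. We get it by discarding the negative terms:
\[
J_\delta(v)=\frac{t^2}{2}\|v\|_E^2-\frac12\|w\|_E^2-\mathcal N_\delta(u)\le\frac12\|v\|_E^2-c_\delta\mathcal A_q(u),
\]
using $t\le1$ and \eqref{eq:trunc-growth}. Lemma~\ref{lem:comparison} then bounds $\mathcal A_q(u)$ from below in terms of $\mathcal A_q(\omega)$, with errors controlled by $\|w\|_2^2$, $\|w\|_E$ and $\|\nabla\omega\|_2$. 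Lemma~\ref{lem:w-estimate} gives $\|w\|_E\le C a^{(q-1)/2}$.

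For the test function, fix $\omega_0\in C_c^\infty(\R^3;\C^2)$ with $\|\omega_0\|_2=1$ and set
\[
\omega_\lambda=\sqrt a\,\lambda^{3/2}\omega_0(\lambda\cdot),\qquad v_\lambda=U_{\rm FW}^{-1}\binom{\omega_\lambda}{0}.
\]
Then $v_\lambda\in S_a^+$, and
\[
\|v_\lambda\|_E^2=\int\lambda(\xi)|\widehat\omega_\lambda|^2\le ma+\frac{1}{2m}\|\nabla\omega_\lambda\|_2^2=ma+\frac{a\lambda^2}{2m}\|\nabla\omega_0\|_2^2 .
\]
This is $<\Theta_a/2=(m+1)a$ for small $\lambda$. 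The weighted term scales as $\mathcal A_q(\omega_\lambda)=a^{q/2}\lambda^{\gamma_q}\mathcal A_q(\omega_0)$.

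Collecting terms,
\[
J_\delta(v_\lambda)-\frac{ma}{2}\le Ca\lambda^2-ca^{q/2}\lambda^{\gamma_q}+C a^{q/2}\lambda^{q}+\text{(error terms)}.
\]
The error terms are $\frac{C}{a}\|w\|_2^2\|v\|_E^q\lesssim a^{q-1}\cdot a^{q/2}$ and $\|w\|_E^q\lesssim a^{q(q-1)/2}$. Take $\lambda=\varepsilon_a$ (or a small multiple of it). Then $a\lambda^2=a^{q/2}\lambda^{\gamma_q}$, so choosing $\lambda=\theta\varepsilon_a$ with $\theta$ small makes the $\lambda^{\gamma_q}$ term dominate, since $\gamma_q<2$; the $\lambda^q$ term is also smaller because $q>\gamma_q$. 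The main term is then $-c\,a\varepsilon_a^2$. The error terms are of higher order in $a$, provided their exponents exceed $1+2\frac{q-2}{2(2-\gamma_q)}$. Verifying these exponents is routine but must be checked; if one fails, it can be improved by using the finer estimate $\|w\|_E\le C\mathcal A_q(v)^{(q-1)/q}$ from Lemma~\ref{lem:w-estimate}.

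\textbf{Right inequality.} Now take $v\in S_a^+$ with $\frac34\Theta_a\le\|v\|_E^2<\Theta_a$, i.e. $\|v\|_E^2\ge\frac32(m+1)a$. By fiber maximality,
\[
J_\delta(v)\ge\Phi_v(0)=I_\delta(v)\ge\frac12\|v\|_E^2-C_\delta\mathcal A_q(v).
\]
By \eqref{eq:weighted} and $v\in\mathcal C$, $\mathcal A_q(v)\le C\|v\|_E^q\le Ca^{q/2}$. Hence
\[
J_\delta(v)\ge\frac34(m+1)a-Ca^{q/2}>\frac{ma}{2}
\]
for small $a$, uniformly on this set, since $q>2$.

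The main obstacle is the bookkeeping in the left inequality: the negative-component error terms from Lemma~\ref{lem:comparison} must be shown to be $o(a\varepsilon_a^2)$. We handle this with the bound $\|w\|_2\le m^{-1/2}\|w\|_E\le Ca^{(q-1)/2}$ and the restriction $\lambda\sim\varepsilon_a$; taking $a_1$ small enough then finishes the proof.
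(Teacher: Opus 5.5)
Your right inequality is the paper's argument. The skeleton of your left inequality is also the paper's: drop $-\frac12\|w\|_E^2$, use $t\le1$ and $F_\delta\ge c_\delta\sigma^q$, then apply \eqref{eq:comparison-w} to a Foldy--Wouthuysen test function. The gap is in the error bookkeeping you actually commit to, where you lost the factor $a^{-1}$. With $\|w\|_E\le Ca^{\frac{q-1}{2}}$ and $\|v_\lambda\|_E^q\le Ca^{\frac q2}$ one only gets
\[
\frac{C}{a}\|w\|_2^2\|v_\lambda\|_E^q\le Ca^{\frac{3q}{2}-2},
\]
not $a^{q-1}\cdot a^{\frac q2}$. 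This bound does not depend on your dilation parameter $\lambda$. Meanwhile the gain your test family can produce, $\sup_\lambda\bigl(c\,a^{\frac q2}\lambda^{\gamma_q}-Ca\lambda^2\bigr)$, is at most of order $a\varepsilon_a^2=a^{1+\frac{q-2}{2-\gamma_q}}$. Put $s=q-2$. The condition $\frac{3q}{2}-2>1+\frac{q-2}{2-\gamma_q}$ is then equivalent to $9s+6b<8$, and this fails inside the admissible range. For example, $b=\frac1{10}$, $q=\frac{57}{20}$ gives $\frac{3q}{2}-2=2.275<2.36=1+\frac{q-2}{2-\gamma_q}$. So the crude bound $\|w\|_E\le Ca^{\frac{q-1}{2}}$ that your last paragraph relies on does not close the argument. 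The other error, $a^{\frac{q(q-1)}{2}}$, happens to be harmless.

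The fallback you mention is exactly what is needed, and it works. Take $\lambda=\theta\varepsilon_a$ with $0<\theta\le1$. Lemma~\ref{lem:FW-low} and $a^{\frac q2}\varepsilon_a^{\gamma_q}=a\varepsilon_a^2$ give $\mathcal A_q(v_\lambda)\le Ca\varepsilon_a^2$. Lemma~\ref{lem:w-estimate} then yields $\|w\|_E\le C(a\varepsilon_a^2)^{\frac{q-1}{q}}$, and the two errors become
\[
Ca^{\frac q2-1}\bigl(a\varepsilon_a^2\bigr)^{\frac{2(q-1)}{q}}\quad\text{and}\quad C\bigl(a\varepsilon_a^2\bigr)^{q-1},
\]
both $o(a\varepsilon_a^2)$. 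Fixing $\theta$ small and then $a$ small finishes the proof.

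The paper uses the same refined estimate but decouples the scale from $a$: it fixes $a\in(0,a_0)$ and lets $\varepsilon\to0$. The errors then carry $\varepsilon^{2\vartheta_q}$ and $\varepsilon^{q\vartheta_q}$ with $\vartheta_q=\frac{(q-1)\gamma_q}{q}$. Only $2\vartheta_q>\gamma_q$, $q\vartheta_q>\gamma_q$ and $q>\gamma_q$ must be checked, no powers of $a$ are compared, and the left inequality holds for every $a<a_0$. Your coupled choice $\lambda\sim\varepsilon_a$ needs $a$ small. In exchange it gives the quantitative bound $\inf J_\delta\le\frac{ma}{2}-c\,a\varepsilon_a^2$ on $S_a^+\cap\mathcal V^+_{\frac{\Theta_a}{2}}$, which anticipates \eqref{eq:energy-limsup}.
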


\begin{proof} We prove the first inequality in~(\ref{eq:threshold-chain}).
Fix $\varphi_0\in C_c^\infty(\R^3;\C^2)\setminus\{0\}$ with
$\norm{\varphi_0}_2=1$. For $\varepsilon\in(0,1)$, define
\[
\varphi_\varepsilon(x)=\sqrt a\,\varepsilon^{\frac32}\varphi_0(\varepsilon x),\quad
v_\varepsilon=U_{\rm FW}^{-1}\binom{\varphi_\varepsilon}{0}.
\]
Since $U_{\rm FW}$ is unitary, we have
$\norm{v_\varepsilon}_2^2=\norm{\varphi_\varepsilon}_2^2=a$, and hence $v_\varepsilon\in S_a^+$.
Moreover,
\begin{equation}\label{eq:test-kinetic}
\norm{v_\varepsilon}_E^2=\int_{\R^3}\lambda(\xi)|\widehat\varphi_\varepsilon(\xi)|^2\,d\xi\le ma+C_{\varphi_0}a\varepsilon^2,
\end{equation}
where $C_{\varphi_0}=\frac{\norm{\nabla\varphi_0}_2^2}{2m}$. The scaling identities are
\begin{equation}\label{eq:test-scaling}
\mathcal A_q(\varphi_\varepsilon)=a^{\frac q2}\varepsilon^{\gamma_q}\mathcal A_q(\varphi_0),\quad
\norm{\nabla\varphi_\varepsilon}_2^q=a^{\frac q2}\varepsilon^q\norm{\nabla\varphi_0}_2^q.
\end{equation}
 It follows from Lemma~\ref{lem:FW-low}, \eqref{eq:gamma-range}, and \eqref{eq:test-scaling} that
\begin{equation}\label{eq:test-weighted-upper}
\mathcal A_q(v_\varepsilon)\le Ca^{\frac q2}
\left(\varepsilon^{\gamma_q}+\varepsilon^q\right)\le Ca^{\frac q2}\varepsilon^{\gamma_q}.
\end{equation}

Choose $\varepsilon_0\in(0,1)$ such that $C_{\varphi_0}\varepsilon_0^2<1$. For $0<\varepsilon<\varepsilon_0$, we obtain from \eqref{eq:test-kinetic}
\[
\norm{v_\varepsilon}_E^2<(m+1)a=\frac{\Theta_a}{2}.
\]
Hence $v_\varepsilon\in S_a^+\cap\mathcal V_{\frac{\Theta_a}{2}}^+\subset S_a^+\cap\mathcal C$. Set $\zeta_\varepsilon=w_\delta(v_\varepsilon)$, $t_\varepsilon=\sqrt{1-\frac{\norm{\zeta_\varepsilon}_2^2}{a}}$, and $u_\varepsilon=G_\delta(v_\varepsilon)=t_\varepsilon v_\varepsilon+\zeta_\varepsilon$.

Let $\vartheta_q=\frac{(q-1)\gamma_q}{q}$. By \eqref{eq:trunc-growth}, Lemma~\ref{lem:w-estimate}, and \eqref{eq:test-weighted-upper}, we have
\[
\norm{\zeta_\varepsilon}_E\le C\mathcal A_q(v_\varepsilon)^{\frac{q-1}{q}}\le Ca^{\frac{q-1}{2}}\varepsilon^{\vartheta_q}.
\]
Since $q>2$,
\begin{equation}\label{eq:vartheta-order}
2\vartheta_q>\gamma_q,\quad q\vartheta_q>\gamma_q.
\end{equation}
Applying \eqref{eq:comparison-w} with $r=q$ and using \eqref{eq:test-scaling}--\eqref{eq:vartheta-order}, we deduce that
\[
\mathcal A_q(u_\varepsilon)\ge{}c_{\varphi_0}a^{\frac q2}\varepsilon^{\gamma_q}-Ca^{\frac q2}\varepsilon^q
-Ca^{\frac{3q}{2}-2}\varepsilon^{2\vartheta_q}-Ca^{\frac{q(q-1)}2}\varepsilon^{q\vartheta_q},
\]
where
$c_{\varphi_0}=4^{1-q}\mathcal A_q(\varphi_0)>0$. By \eqref{eq:gamma-range} and \eqref{eq:vartheta-order}, the exponents of $\varepsilon$ in the three error terms are all greater than $\gamma_q$. Hence, for every fixed $a\in(0,a_0)$, there exists $\varepsilon_1(a)\in(0,\varepsilon_0)$ such that
\[
\mathcal A_q(u_\varepsilon)\ge\frac{c_{\varphi_0}}2 a^{\frac q2}\varepsilon^{\gamma_q}
\]
whenever $0<\varepsilon<\varepsilon_1(a)$.

Since $F_\delta(\sigma)\ge c_\delta\sigma^q$ and
\[
\frac12\left(t_\varepsilon^2\norm{v_\varepsilon}_E^2-\norm{\zeta_\varepsilon}_E^2\right)\le\frac12\norm{v_\varepsilon}_E^2,
\]
we obtain
\[
J_\delta(v_\varepsilon)=I_\delta(u_\varepsilon)\le\frac{ma}{2}+C_{\varphi_0}a\varepsilon^2
-c_{\mathrm{test}}a^{\frac q2}\varepsilon^{\gamma_q}
\]
for some $c_{\mathrm{test}}>0$ independent of $a$ and $\varepsilon$. Since $\gamma_q<2$, for each fixed $a\in(0,a_0)$ we may choose $\varepsilon_2(a)\in(0,\varepsilon_1(a))$ such that
\[
C_{\varphi_0}a\varepsilon_2(a)^2<c_{\mathrm{test}}a^{\frac q2}\varepsilon_2(a)^{\gamma_q}.
\]
With this choice of $\varepsilon_2(a)$, we have
\begin{equation}\label{eq:inner-strict-upper}
\inf_{v\in S_a^+\cap\mathcal V_{\frac{\Theta_a}{2}}^+}J_\delta(v)\le J_\delta(v_{\varepsilon_2(a)})<\frac{ma}{2}.
\end{equation}

We now prove the second inequality in (\ref{eq:threshold-chain}).
Let
\[
v\in S_a^+\cap\left(\mathcal V_{\Theta_a}^+\setminus\mathcal V_{\frac{3\Theta_a}{4}}^+\right).
\]
Then
\begin{equation}\label{eq:belt-norm-range}
\frac{3\Theta_a}{4}\le\norm{v}_E^2<\Theta_a.
\end{equation}
Taking $w=0$ in the fiber maximization, we obtain
\[
J_\delta(v)\ge I_\delta(v)=\frac12\norm{v}_E^2-\mathcal N_\delta(v).
\]
From \eqref{eq:trunc-growth} and Lemma~\ref{lem:weighted}, we have $\mathcal N_\delta(v)\le C\norm{v}_E^q$.
Using \eqref{eq:belt-norm-range} and $\Theta_a=2(m+1)a$, we deduce
\begin{equation}\label{eq:belt-strict-lower}
J_\delta(v)\ge\frac12\norm{v}_E^2-C\norm{v}_E^q\ge\frac{3\Theta_a}{8}-Ca^{\frac q2}
=\frac{3(m+1)}4a-Ca^{\frac q2}>\frac{ma}{2}
\end{equation}
for all sufficiently small $a>0$. Here we used $q>2$ and $\frac{3(m+1)}{4}-\frac{m}{2}=\frac{m+3}{4}>0$. Taking the infimum over the outer region, we conclude that
\begin{equation}\label{eq:belt-inf-lower}
\inf_{v\in S_a^+\cap(\mathcal V_{\Theta_a}^+\setminus\mathcal V_{\frac{3\Theta_a}{4}}^+)}J_\delta(v)>\frac{ma}{2}.
\end{equation}
Choosing $a_1\in(0,a_0)$ sufficiently small and combining \eqref{eq:inner-strict-upper} with
\eqref{eq:belt-inf-lower}, we obtain \eqref{eq:threshold-chain}.
\end{proof}

Define
\[
e_a^\delta:=\inf_{v\in S_a^+\cap\mathcal V_{\Theta_a}^+}J_\delta(v),\quad
X_a:=S_a^+\cap\overline{\mathcal V}_{\frac{3\Theta_a}{4}}^+.
\]
Since $\frac{3\Theta_a}{4}<\Theta_a$, the closed set $X_a$ is contained in $S_a^+\cap\mathcal C$, the domain of $J_\delta$ on the mass sphere.

\begin{lemma}\label{lem:energy-level}
For every $a\in(0,a_1)$,
\begin{equation}\label{eq:energy-level-chain}
0<e_a^\delta=\inf_{v\in X_a}J_\delta(v)<\frac{ma}{2}<\inf_{v\in S_a^+\cap\partial\mathcal V_{\frac{3\Theta_a}{4}}^+}J_\delta(v).
\end{equation}
In particular, every minimizing sequence $\{v_n\}$ for $J_\delta$ on $X_a$ satisfies
\[
\norm{v_n}_E^2<\frac{3\Theta_a}{4}\quad\text{for all sufficiently large }n.
\]
\end{lemma}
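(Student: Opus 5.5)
The claim has four pieces: positivity of $e_a^\delta$, the identity $e_a^\delta=\inf_{X_a}J_\delta$, the upper bound $e_a^\delta<\frac{ma}{2}$, and the lower bound on the sphere $\partial\mathcal V^+_{3\Theta_a/4}$. I will prove them in that order and then derive the statement about minimizing sequences.

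\emph{Positivity.} For $v\in S_a^+\cap\mathcal V_{\Theta_a}^+$ the choice $w=0$ in the fiber maximization gives $J_\delta(v)\ge I_\delta(v)=\frac12\norm{v}_E^2-\mathcal N_\delta(v)$. By \eqref{eq:normequiv}, $\norm{v}_E^2\ge m\norm{v}_2^2=ma$. By \eqref{eq:trunc-growth} and Lemma~\ref{lem:weighted}, $\mathcal N_\delta(v)\le C\norm{v}_E^q\le C\Theta_a^{q/2}=Ca^{q/2}$. Hence
\[
J_\delta(v)\ge \frac{ma}{2}-Ca^{\frac q2}\ge\frac{ma}{4}.
\]
The last inequality holds after shrinking $a_1$ if needed, using $q>2$. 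This gives $e_a^\delta>0$.

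\emph{The infimum over $X_a$.} Split $S_a^+\cap\mathcal V_{\Theta_a}^+$ into $X_a$ and the belt $S_a^+\cap(\mathcal V_{\Theta_a}^+\setminus\overline{\mathcal V}^+_{3\Theta_a/4})$. The belt lies inside the outer region of Lemma~\ref{lem:annulus}, so $J_\delta>\frac{ma}{2}$ there by \eqref{eq:threshold-chain}. On the other hand, $X_a\supset S_a^+\cap\mathcal V^+_{\Theta_a/2}$, so the first inequality of \eqref{eq:threshold-chain} gives $\inf_{X_a}J_\delta<\frac{ma}2$. Combining the two, the infimum over the whole set is attained on $X_a$, so $e_a^\delta=\inf_{X_a}J_\delta<\frac{ma}{2}$.

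\emph{The boundary sphere.} The set $S_a^+\cap\partial\mathcal V^+_{3\Theta_a/4}$ consists of those $v\in S_a^+$ with $\norm{v}_E^2=\frac{3\Theta_a}4$. This set lies inside $\mathcal V_{\Theta_a}^+\setminus\mathcal V^+_{3\Theta_a/4}$, so the second inequality of \eqref{eq:threshold-chain} gives $\inf J_\delta>\frac{ma}{2}$ there. To keep a strict inequality after taking the infimum, I will not just quote the lemma but reuse its explicit bound \eqref{eq:belt-strict-lower}. That bound gives the uniform estimate $J_\delta(v)\ge\frac{3(m+1)}4a-Ca^{q/2}$ on this sphere, which strictly exceeds $\frac{ma}2$ for small $a$.

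\emph{Minimizing sequences.} Suppose $\{v_n\}\subset X_a$ with $J_\delta(v_n)\to e_a^\delta$. Since $e_a^\delta<\frac{ma}{2}$, we have $J_\delta(v_n)<\frac{ma}{2}$ for all large $n$. Every point of $X_a$ with $\norm{v}_E^2=\frac{3\Theta_a}4$ has $J_\delta(v)>\frac{ma}2$ by the boundary estimate, so $\norm{v_n}_E^2<\frac{3\Theta_a}{4}$ for all large $n$.

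The only delicate point is this uniform margin on the boundary sphere. The explicit bound \eqref{eq:belt-strict-lower} provides it with constants independent of $v$.
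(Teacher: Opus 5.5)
Your proposal is correct and follows the paper's proof essentially step by step. Both arguments take $w=0$ in the fiber maximization to get positivity, split $S_a^+\cap\mathcal V_{\Theta_a}^+$ against the two inequalities of Lemma~\ref{lem:annulus} to identify $e_a^\delta$ with $\inf_{X_a}J_\delta$, and use the uniform bound \eqref{eq:belt-strict-lower} on the boundary sphere. The strictness you worry about there is in fact automatic, since the sphere is a subset of the outer region and the second inequality in \eqref{eq:threshold-chain} is already a strict bound on the infimum.
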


\begin{proof}
Since $S_a^+\cap\mathcal V_{\frac{\Theta_a}{2}}^+\subset X_a$, we obtain from Lemma~\ref{lem:annulus}
\[
\inf_{v\in X_a}J_\delta(v)\le\inf_{v\in S_a^+\cap\mathcal V_{\frac{\Theta_a}{2}}^+}J_\delta(v)<\frac{ma}{2}
<\inf_{v\in S_a^+\cap(\mathcal V_{\Theta_a}^+\setminus\mathcal V_{\frac{3\Theta_a}{4}}^+)}J_\delta(v).
\]
It follows from the definition of $e_a^\delta$ and the above inequalities that
\[
e_a^\delta=\inf_{v\in X_a}J_\delta(v)<\frac{ma}{2}.
\]
Moreover, for every $v\in S_a^+\cap\mathcal V_{\Theta_a}^+$, we take $w=0$ in the fiber maximization to obtain
\[
J_\delta(v)\ge I_\delta(v)=\frac12\norm{v}_E^2-\mathcal N_\delta(v).
\]
Since $v\in E^+$, $\norm{v}_2^2=a$, and $\norm{v}_E^2<\Theta_a$, we have $\norm{v}_E^2\ge ma$ and
$\mathcal N_\delta(v)\le C\norm{v}_E^q\le C a^{\frac q2}$. Therefore,
\[
J_\delta(v)\ge\frac{ma}{2}-C a^{\frac q2}>0
\]
after decreasing $a_1$ if necessary, which implies $e_a^\delta>0$. We also have
\[
S_a^+\cap\partial\mathcal V_{\frac{3\Theta_a}{4}}^+\subset S_a^+\cap
(\mathcal V_{\Theta_a}^+\setminus\mathcal V_{\frac{3\Theta_a}{4}}^+),
\]
so, by \eqref{eq:belt-strict-lower}, we deduce that
\[
\inf_{v\in S_a^+\cap\partial\mathcal V_{\frac{3\Theta_a}{4}}^+}J_\delta(v)>\frac{ma}{2}>e_a^\delta,
\]
which proves \eqref{eq:energy-level-chain}. Consequently, if $\{v_n\}\subset X_a$ is a minimizing sequence for $J_\delta$, then $\norm{v_n}_E^2<\frac{3\Theta_a}{4}$ for all sufficiently large $n$.
\end{proof}

\subsection{A minimizer of the reduced functional}
By Lemma~\ref{lem:energy-level} and Ekeland's variational
principle, we obtain a Palais--Smale sequence for $J_\delta$
on $S_a^+$ at the level $e_a^\delta$. 

\begin{lemma}\label{lem:ekeland}
For every fixed $a\in(0,a_1)$, there exists a sequence $\{v_n\}\subset X_a$ such that
\begin{equation}\label{eq:reduced-PS}
J_\delta(v_n)\rightarrow e_a^\delta,\quad\norm{d\bigl(J_\delta|_{S_a^+}\bigr)(v_n)}_{(T_{v_n}S_a^+)^*}\rightarrow0,
\end{equation}
and $\norm{v_n}_E^2<\frac{3\Theta_a}{4}$ for all sufficiently large $n$.
\end{lemma}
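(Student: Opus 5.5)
We apply Ekeland's variational principle on the complete metric space $X_a$ and then use the strict gap in Lemma~\ref{lem:energy-level} to turn the near-minimality into a vanishing constrained derivative.

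\textbf{Setting up Ekeland.} $X_a=S_a^+\cap\overline{\mathcal V}^+_{3\Theta_a/4}$ is closed in $E^+$, because $S_a^+$ is closed (the $L^2$ norm is continuous on $E$ by \eqref{eq:normequiv}). Hence $X_a$ is a complete metric space with the distance induced by $\norm{\cdot}_E$. The functional $J_\delta$ is $C^1$ on a neighbourhood of $X_a$ in $\mathcal C$ by Lemma~\ref{lem:fiber}, so it is continuous on $X_a$. It is bounded below there, since $e_a^\delta>0$ by Lemma~\ref{lem:energy-level}.

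\textbf{Choosing the sequence.} Take any minimizing sequence $\{\tilde v_n\}\subset X_a$ with $J_\delta(\tilde v_n)\le e_a^\delta+\varepsilon_n^2$, where $\varepsilon_n\to0$. Ekeland's principle gives $v_n\in X_a$ with:
\begin{itemize}
\item $J_\delta(v_n)\le J_\delta(\tilde v_n)$, so $J_\delta(v_n)\to e_a^\delta$ and $\{v_n\}$ is itself minimizing;
\item $\norm{v_n-\tilde v_n}_E\le\varepsilon_n$;
\item $J_\delta(v)\ge J_\delta(v_n)-\varepsilon_n\norm{v-v_n}_E$ for all $v\in X_a$.
\end{itemize}
Because $\{v_n\}$ is minimizing, Lemma~\ref{lem:energy-level} gives $\norm{v_n}_E^2<\frac{3\Theta_a}{4}$ for all large $n$. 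So $v_n$ lies in the relative interior of $X_a$ inside the $C^1$ manifold $S_a^+$.

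\textbf{Deriving the derivative bound.} Fix $\xi\in T_{v_n}S_a^+$ with $\norm{\xi}_E=1$. Consider the curve
\[
\gamma(s)=\sqrt a\,\frac{v_n+s\xi}{\norm{v_n+s\xi}_2}\in S_a^+.
\]
It satisfies $\gamma(0)=v_n$ and $\gamma'(0)=\xi$, because $\ip{v_n}{\xi}_2=0$. It is $C^1$ near $s=0$ as a map into $E^+$, since $P^+$ commutes with scalar multiplication. For $|s|$ small, $\gamma(s)\in X_a$ by the strict inequality $\norm{v_n}_E^2<\frac{3\Theta_a}{4}$. Applying the Ekeland inequality with $v=\gamma(s)$, dividing by $s>0$, and letting $s\to0^+$ gives
\[
d(J_\delta|_{S_a^+})(v_n)[\xi]\ge-\varepsilon_n.
\]
Replacing $\xi$ by $-\xi$ yields $|d(J_\delta|_{S_a^+})(v_n)[\xi]|\le\varepsilon_n$, which is \eqref{eq:reduced-PS}.

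\textbf{Main obstacle.} The only delicate point is ensuring the perturbation $\gamma(s)$ stays inside $X_a$, so that the Ekeland inequality applies. This is exactly what the strict boundary gap in Lemma~\ref{lem:energy-level} guarantees for large $n$. Using $\norm{\gamma(s)-v_n}_E=|s|+o(s)$, the remaining steps are routine.
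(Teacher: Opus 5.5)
Your proposal is correct and follows the paper's proof. Both arguments apply Ekeland's principle on the complete set $X_a$ and use the strict gap in Lemma~\ref{lem:energy-level} to put $v_n$ in the relative interior for large $n$. Both then use the normalized curve $\sqrt a\,(v_n+s\xi)/\norm{v_n+s\xi}_2$ to turn the Ekeland inequality into $|d(J_\delta|_{S_a^+})(v_n)[\xi]|\le\varepsilon_n\norm{\xi}_E$.
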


\begin{proof}
Since $X_a$ is closed in $E^+$, it is complete with respect to the norm $\norm{\cdot}_E$. For each $n$, choose
$\widetilde v_n\in X_a$ such that
\[
J_\delta(\widetilde v_n)\le e_a^\delta+\frac1{n^2}.
\]
By Ekeland's variational principle, there exists $v_n\in X_a$ such that $J_\delta(v_n)\le e_a^\delta+\frac1{n^2}$ and
\begin{equation}\label{eq:ekeland-ineq}
J_\delta(z)\ge J_\delta(v_n)-\frac1n\norm{z-v_n}_E,\quad z\in X_a.
\end{equation}
Thus $J_\delta(v_n)\to e_a^\delta$. By Lemma~\ref{lem:energy-level}, we also have $\norm{v_n}_E^2<\frac{3\Theta_a}{4}$ for all sufficiently large $n$.

Fix such an $n$ and let $\xi\in T_{v_n}S_a^+$. Consider the curve
\[
\mathfrak c_{n,\xi}(\varsigma)=\sqrt a\,\frac{v_n+\varsigma\xi}{\norm{v_n+\varsigma\xi}_2}.
\]
This is a $C^1$ curve in $S_a^+$ with $\mathfrak c_{n,\xi}(0)=v_n$ and $\mathfrak c_{n,\xi}'(0)=\xi$. Since $\norm{v_n}_E^2<\frac{3\Theta_a}{4}$, we have
$\mathfrak c_{n,\xi}(\varsigma)\in X_a$ for all sufficiently small $|\varsigma|$. Applying \eqref{eq:ekeland-ineq} along the curve $\mathfrak c_{n,\xi}$
and using $\mathfrak c_{n,\xi}'(0)=\xi$, we deduce that
\[
\left|d\bigl(J_\delta|_{S_a^+}\bigr)(v_n)[\xi]\right|\le\frac1n\norm{\xi}_E.
\]
Taking the supremum over all
$\xi\in T_{v_n}S_a^+$ with $\norm{\xi}_E\le1$, we obtain
\eqref{eq:reduced-PS}.
\end{proof}

Set
\[
u_n=G_\delta(v_n),\quad \mu_n=\kappa_\delta(u_n).
\]
By Lemma~\ref{lem:lifting}, we have
\begin{equation}\label{eq:PS-full}
\mathcal H_\delta(u_n)\rightarrow0\quad\text{in }E^*,
\end{equation}
and, since $v_n\in X_a$, we obtain from Lemma~\ref{lem:w-estimate}
\begin{equation}\label{eq:PS-bound}
\norm{u_n}_E\le C\sqrt a,\quad \norm{u_n}_2^2=a.
\end{equation}

\begin{lemma}\label{lem:multiplier}
There exists $a_2\in(0,a_1)$ such that, for every fixed $a\in(0,a_2)$,
\begin{equation}\label{eq:multiplier-gap}
0<m-C a^{\frac{q-2}{2}}\le\liminf_{n\to\infty}\mu_n\le\limsup_{n\to\infty}\mu_n\le\frac{2e_a^\delta}{a}<m.
\end{equation}
\end{lemma}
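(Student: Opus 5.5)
We evaluate the defining formula for $\mu_n=\kappa_\delta(u_n)$ directly, using the structure $u_n=t_nv_n+w_n$ with $u_n^+=t_nv_n$, $w_n=w_\delta(v_n)\in E^-$, and $\norm{u_n^+}_2^2=t_n^2a$. By definition,
\[
\mu_n=\frac{\norm{u_n^+}_E^2-\Re\int_{\R^3}|x|^{-b}f_\delta(|u_n|)u_n\cdot\overline{u_n^+}\,dx}{\norm{u_n^+}_2^2}.
\]
For the \emph{lower bound}, we drop nothing and estimate the nonlinear term from above. Since $u_n^+\in E^+$, \eqref{eq:normequiv} gives $\norm{u_n^+}_E^2\ge m\norm{u_n^+}_2^2$. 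By \eqref{eq:trunc-growth}, H\"older's inequality with respect to $|x|^{-b}\,dx$, Lemma~\ref{lem:weighted}, and \eqref{eq:PS-bound}, the nonlinear term is bounded by $C\norm{u_n}_E^{q-1}\norm{u_n^+}_E\le Ca^{\frac q2}$. Since $\norm{u_n^+}_2^2=t_n^2a\ge\frac34a$, this yields $\mu_n\ge m-Ca^{\frac{q-2}{2}}$ for every $n$. Choosing $a_2$ small then gives $m-Ca^{\frac{q-2}{2}}>0$.

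For the \emph{upper bound}, we compare $\mu_n$ with the energy. The plan is to use \eqref{eq:PS-full} tested against $u_n$ itself, together with \eqref{eq:trunc-mult}. Testing $\mathcal H_\delta(u_n)\to0$ on $u_n$, which is bounded in $E$, gives
\[
\norm{u_n^+}_E^2-\norm{u_n^-}_E^2-\Re\int_{\R^3}|x|^{-b}f_\delta(|u_n|)|u_n|^2\,dx=\mu_na+o(1).
\]
Here we use that $\langle\mathcal H_\delta(u_n),u_n\rangle$ reproduces $\kappa_\delta(u_n)\norm{u_n}_2^2=\mu_na$. On the other hand,
\[
2I_\delta(u_n)=\norm{u_n^+}_E^2-\norm{u_n^-}_E^2-2\int_{\R^3}|x|^{-b}F_\delta(|u_n|)\,dx.
\]
Subtracting, we get
\[
\mu_na=2I_\delta(u_n)-\int_{\R^3}|x|^{-b}\bigl(|u_n|F_\delta'(|u_n|)-2F_\delta(|u_n|)\bigr)\,dx+o(1)\le 2J_\delta(v_n)+o(1),
\]
by \eqref{eq:trunc-mult}. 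Letting $n\to\infty$ and applying \eqref{eq:reduced-PS} gives $\limsup\mu_n\le 2e_a^\delta/a$, which is $<m$ by Lemma~\ref{lem:energy-level}.

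The step needing most care is the identity $\langle\mathcal H_\delta(u_n),u_n\rangle=\langle H_0u_n,u_n\rangle-\int|x|^{-b}f_\delta|u_n|^2-\mu_na$ with an $o(1)$ error. This requires $\norm{u_n}_E\le C\sqrt a$ uniformly in $n$, which holds by \eqref{eq:PS-bound}. The rest is the bookkeeping of the positivity of $t_n$ and of the constants' independence of $n$, so that the lower bound holds for all $n$ and passes to the $\liminf$.
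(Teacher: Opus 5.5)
Your proposal is correct and follows essentially the same route as the paper. The lower bound comes from the defining formula for $\kappa_\delta$, using $\norm{u_n^+}_E^2\ge m\norm{u_n^+}_2^2$, $\norm{u_n^+}_2^2\ge\frac34a$, and the $Ca^{\frac q2}$ bound on the nonlinear term; the upper bound comes from testing \eqref{eq:PS-full} against $u_n$ and then applying \eqref{eq:trunc-mult} and Lemma~\ref{lem:energy-level}.
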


\begin{proof}
Write
\[
u_n^+=t_n v_n,\quad
t_n=\frac{\sqrt{a-\norm{w_\delta(v_n)}_2^2}}{\sqrt a}
\ge\frac{\sqrt3}{2}.
\]
Therefore,
\[
\norm{u_n^+}_2^2=t_n^2a\ge\frac{3a}{4}.
\]
Since $\mu_n=\kappa_\delta(u_n)$,
\[
\mu_n\norm{u_n^+}_2^2=\norm{u_n^+}_E^2-\Re\int_{\R^3}|x|^{-b}f_\delta(|u_n|)u_n\cdot\overline{u_n^+}\,dx.
\]
By Lemma~\ref{lem:weighted}, \eqref{eq:trunc-growth}, and \eqref{eq:PS-bound}, the nonlinear integral satisfies
\[
\left|\int_{\R^3}|x|^{-b}f_\delta(|u_n|)u_n\cdot\overline{u_n^+}\,dx\right|\le C\norm{u_n}_{L_b^q}^{q-1}\norm{u_n^+}_{L_b^q}\le C a^{\frac q2}.
\]
Since $\norm{u_n^+}_E^2\ge m\norm{u_n^+}_2^2$, it follows that
\[
\mu_n\ge m-Ca^{\frac{q-2}{2}}.
\]
Choose $a_2\in(0,a_1)$ sufficiently small so that $m-Ca^{\frac{q-2}{2}}>0$ for every $a\in(0,a_2)$. This proves the lower bound in \eqref{eq:multiplier-gap}.

Testing \eqref{eq:PS-full} against $u_n$ and using \eqref{eq:PS-bound}, we obtain
\[
\mu_n a=2I_\delta(u_n)-\int_{\R^3}|x|^{-b}\left(|u_n|F_\delta'(|u_n|)-2F_\delta(|u_n|)\right)\,dx+o_n(1).
\]
By \eqref{eq:trunc-mult}, it follows that
\[
\mu_n a\le2I_\delta(u_n)+o_n(1)=2J_\delta(v_n)+o_n(1).
\]
Passing to the upper limit and using~\eqref{eq:energy-level-chain}, we conclude that
\[
\limsup_{n\to\infty}\mu_n a\le2e_a^\delta<ma.
\]
Together with the lower bound, this proves \eqref{eq:multiplier-gap}.
\end{proof}

\begin{proposition}\label{prop:PS}
For every $a\in(0,a_2)$, let $\{v_n\}$ be the sequence obtained in Lemma~\ref{lem:ekeland}. Then there exist
\[
v_a\in X_a,\quad \mu_a^\delta\in(0,m)
\]
such that, up to a subsequence,
\begin{equation*}\label{eq:PS-convergence}
v_n\rightarrow v_a\quad\text{in }E^+,\quad u_n\rightarrow u_a^\delta:=G_\delta(v_a)\quad\text{in }E,\quad\mu_n\rightarrow\mu_a^\delta\quad\text{in }\R.
\end{equation*}
Moreover,
\begin{equation}\label{eq:truncated-solution}
H_0u_a^\delta=\mu_a^\delta u_a^\delta+|x|^{-b}f_\delta(|u_a^\delta|)u_a^\delta\quad\text{in }E^*,\quad\norm{u_a^\delta}_2^2=a,
\end{equation}
and
\begin{equation}\label{eq:truncated-properties}
0<J_\delta(v_a)=I_\delta(u_a^\delta)=e_a^\delta<\frac{ma}{2},\quad\norm{u_a^\delta}_E\le C_0\sqrt a,
\end{equation}
where $C_0>0$ is independent of $a$.
\end{proposition}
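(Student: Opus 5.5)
We prove strong convergence of the Palais--Smale sequence by splitting $u_n$ into spectral components and using the frequency gap $\mu<m$ as coercivity. First, $\{v_n\}\subset X_a$ is bounded in $E^+$ and $\{u_n\}$ is bounded in $E$ by \eqref{eq:PS-bound}. By Lemma~\ref{lem:multiplier}, $\{\mu_n\}$ is bounded. Passing to a subsequence, we take $v_n\rightharpoonup v$ in $E^+$, $u_n\rightharpoonup u$ in $E$, and $\mu_n\to\mu_a^\delta$. The limit satisfies $\mu_a^\delta\in[m-Ca^{\frac{q-2}2},2e_a^\delta/a]\subset(0,m)$.

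\textbf{Step 1: weak limit solves the truncated equation.} The growth bound $|g_\delta(z)|\le C_\delta|z|^{q-1}$ from Lemma~\ref{lem:gdelta-C1}, together with \eqref{eq:gdelta-lipschitz} and the compact embedding of Lemma~\ref{lem:weighted} at exponent $q<3-b$, gives $u_n\to u$ in $L^q(\R^3,|x|^{-b}dx)$. Hence $|x|^{-b}g_\delta(u_n)\to|x|^{-b}g_\delta(u)$ in $E^*$, by H\"older's inequality with respect to $|x|^{-b}dx$. Passing to the limit in \eqref{eq:PS-full} then yields
\[
H_0u=\mu_a^\delta u+|x|^{-b}g_\delta(u)\quad\text{in }E^*.
\]

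\textbf{Step 2: strong convergence via the gap.} Test $\mathcal H_\delta(u_n)-\mathcal H_\delta(u)$ against $(u_n-u)^+$ and against $-(u_n-u)^-$, writing $z_n=u_n-u$. The nonlinear contributions tend to zero by the strong weighted convergence. The remaining term $(\mu_n-\mu_a^\delta)\ip{u}{\cdot}_2$ is also $o(1)$, because $z_n$ is bounded. This gives
\[
\norm{z_n^+}_E^2-\mu_a^\delta\norm{z_n^+}_2^2\to0,\qquad \norm{z_n^-}_E^2+\mu_a^\delta\norm{z_n^-}_2^2\to0.
\]
Since $\norm{z^+}_E^2\ge m\norm{z^+}_2^2$ and $\mu_a^\delta<m$, the first quantity is at least $(1-\mu_a^\delta/m)\norm{z_n^+}_E^2$. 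The second is at least $\norm{z_n^-}_E^2$ because $\mu_a^\delta>0$. Therefore $u_n\to u$ in $E$. In particular $\norm{u}_2^2=a$, so $u\ne0$.

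I expect the main obstacle to be this step, specifically producing a \emph{strictly positive} limit $\mu_a^\delta$ and a limit strictly below $m$. Both are exactly what Lemma~\ref{lem:multiplier} provides, so with it in hand the argument is short.

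\textbf{Step 3: convergence of $v_n$ and identification of the limit.} From $u_n^+=t_nv_n$ and $t_n\ge\sqrt3/2$, with $t_n^2a=\norm{u_n^+}_2^2\to\norm{u^+}_2^2$, we get $t_n\to t>0$. Hence $v_n=u_n^+/t_n\to v_a:=u^+/t$ in $E^+$. Then $v_a\in X_a$, because $X_a$ is closed and $\norm{v_a}_2^2=a$.

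Continuity of $w_\delta$ and $G_\delta$ (Lemma~\ref{lem:fiber}) gives
\[
G_\delta(v_n)\to G_\delta(v_a),
\]
so $u=G_\delta(v_a)=:u_a^\delta$. Continuity of $J_\delta$ gives $J_\delta(v_a)=\lim J_\delta(v_n)=e_a^\delta$. Combined with Lemma~\ref{lem:energy-level}, this yields $0<e_a^\delta<ma/2$.

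The bound $\norm{u_a^\delta}_E\le C_0\sqrt a$ follows from \eqref{eq:PS-bound} and lower semicontinuity of the norm. Finally, the truncated equation from Step 1 is exactly \eqref{eq:truncated-solution}, since $g_\delta(u)=f_\delta(|u|)u$.
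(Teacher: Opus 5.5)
Your proposal is correct and follows essentially the same route as the paper. You show the weak limit solves the truncated equation via compactness at exponent $q$, get strong convergence by testing $(H_0-\mu_a^\delta)z_n\to0$ against $z_n^+$ and $-z_n^-$ using $0<\mu_a^\delta<m$, and then recover $v_n\to v_a$ through $t_n$ and identify $u_a^\delta=G_\delta(v_a)$ by continuity. (One notational point: in Step 2 you should subtract the limiting expression $H_0u-\mu_a^\delta u-|x|^{-b}g_\delta(u)$, which vanishes by Step 1, rather than $\mathcal H_\delta(u)$, whose multiplier $\kappa_\delta(u)$ is only defined once $u\in S_a$ with $u^+\neq0$ is known.)
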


\begin{proof}
By \eqref{eq:PS-bound} and Lemma~\ref{lem:multiplier}, we may pass to a subsequence such that $u_n\rightharpoonup u_a^\delta$ in $E$ and $\mu_n\to\mu_a^\delta$ for some $\mu_a^\delta\in(0,m)$.

Since $q<3-b$, we obtain from  Lemma~\ref{lem:weighted} that
\[
u_n\rightarrow u_a^\delta\quad\text{in }L^q(\R^3,|x|^{-b}\,dx;\C^4).
\]
By \eqref{eq:gdelta-lipschitz} and H\"{o}lder's inequality, for every $\phi\in E$,
\[
\begin{aligned}
&\left|\int_{\R^3}|x|^{-b}\left(f_\delta(|u_n|)u_n-f_\delta(|u_a^\delta|)u_a^\delta\right)\cdot\overline\phi\,dx\right|\\
&\quad\le C\left(\norm{u_n}_{L_b^q}^{q-2}+\norm{u_a^\delta}_{L_b^q}^{q-2}\right)\norm{u_n-u_a^\delta}_{L_b^q}\norm{\phi}_{L_b^q}.
\end{aligned}
\]
Consequently,
\begin{equation}\label{eq:nonlinear-Edual}
|x|^{-b}f_\delta(|u_n|)u_n\rightarrow|x|^{-b}f_\delta(|u_a^\delta|)u_a^\delta\quad\text{in }E^*.
\end{equation}
Passing to the limit in \eqref{eq:PS-full}, we conclude that
\[
H_0u_a^\delta=\mu_a^\delta u_a^\delta+|x|^{-b}f_\delta(|u_a^\delta|)u_a^\delta\quad\text{in }E^*.
\]

Set $z_n=u_n-u_a^\delta$. Since $\{u_n\}$ is bounded in $E$ and $\mu_n\to\mu_a^\delta$, we have $(\mu_n-\mu_a^\delta)u_n\to0$ in $E^*$. It follows from \eqref{eq:PS-full}, \eqref{eq:nonlinear-Edual}, and the equation for $u_a^\delta$ that
\begin{equation}\label{eq:gap-difference}
(H_0-\mu_a^\delta)z_n\rightarrow0\quad\text{in }E^*.
\end{equation}

Testing \eqref{eq:gap-difference} against $z_n^+$ and $-z_n^-$, respectively, we obtain
\[
\norm{z_n^+}_E^2-\mu_a^\delta\norm{z_n^+}_2^2=o_n(1)\norm{z_n^+}_E
\]
and
\[
\norm{z_n^-}_E^2+\mu_a^\delta\norm{z_n^-}_2^2=o_n(1)\norm{z_n^-}_E.
\]
Using $\norm{z_n^+}_E^2\ge m\norm{z_n^+}_2^2$ and $0<\mu_a^\delta<m$, we deduce
\[
\left(1-\frac{\mu_a^\delta}{m}\right)\norm{z_n^+}_E^2\le o_n(1)\norm{z_n^+}_E,\quad\norm{z_n^-}_E^2\le o_n(1)\norm{z_n^-}_E.
\]
Hence $u_n\to u_a^\delta$ strongly in $E$.

Since $I_\delta(u_n)=J_\delta(v_n)\to e_a^\delta$ and $u_n\to u_a^\delta$ strongly in $E$, we have $\norm{u_a^\delta}_2^2=a$ and $I_\delta(u_a^\delta)=e_a^\delta$. Hence, by Lemma~\ref{lem:energy-level},
\[
0<I_\delta(u_a^\delta)=e_a^\delta<\frac{ma}{2}.
\]

To prove that $v_n\to v_a$ strongly in $E^+$, write
\[
u_n=t_n v_n+w_\delta(v_n),\quad t_n=\frac{\sqrt{a-\norm{w_\delta(v_n)}_2^2}}{\sqrt a}\ge\frac{\sqrt3}{2}.
\]
From $w_\delta(v_n)=u_n^-\to(u_a^\delta)^-$ in $E$, we obtain
\[
t_n\to t_a:=\frac{\sqrt{a-\norm{(u_a^\delta)^-}_2^2}}{\sqrt a}\ge\frac{\sqrt3}{2}.
\]
Consequently,
\[
v_n=\frac{u_n^+}{t_n}\rightarrow v_a:=\frac{(u_a^\delta)^+}{t_a}\quad\text{in }E^+,
\]
which implies $\norm{v_a}_2^2=a$. Moreover,
\[
\norm{v_a}_E^2=\lim_{n\to\infty}\norm{v_n}_E^2\le\frac{3\Theta_a}{4}<\Theta_a,
\]
and hence $v_a\in X_a\subset S_a^+\cap\mathcal C$.

By the continuity of $w_\delta$,
\[
w_\delta(v_a)=\lim_{n\to\infty}w_\delta(v_n)=(u_a^\delta)^-.
\]
It follows from the definition of $t_a$ that
\[
u_a^\delta=t_a v_a+w_\delta(v_a)=G_\delta(v_a).
\]
Hence $J_\delta(v_a)=I_\delta(u_a^\delta)=e_a^\delta$.

Finally, passing to the limit in \eqref{eq:PS-bound}, we obtain
\[
\norm{u_a^\delta}_E\le C\sqrt a.
\]
This proves \eqref{eq:truncated-properties} for some constant $C_0>0$ independent of $a$.
\end{proof}

The conclusions \eqref{eq:truncated-solution} and \eqref{eq:truncated-properties} also hold for every minimizer of $J_\delta$ on $X_a$. Indeed, let $v\in X_a$ satisfy $J_\delta(v)=e_a^\delta$. By Lemma~4.4, $v$ belongs to the relative interior of $X_a$, and therefore
\[
d(J_\delta|_{S_a^+})(v)=0.
\]
Set $u=G_\delta(v)$ and $\mu=\kappa_\delta(u)$.
Then $\mathcal H_\delta(u)=0$ by \eqref{eq:lifting}.
Since $v\in X_a$, we deduce from \eqref{eq:w-estimate} that
$\|u\|_E\le C\sqrt a$, uniformly over these minimizers.
As in the proof of Lemma~\ref{lem:multiplier}, we obtain
$\mu\ge m-Ca^{\frac{q-2}{2}}$.
Testing the equation against $u$, we have
\[
\mu a=2e_a^\delta-\int_{\mathbb R^3}|x|^{-b}\bigl(|u|F_\delta'(|u|)-2F_\delta(|u|)\bigr)\,dx\le 2e_a^\delta<ma.
\]
After decreasing $a_2$ and increasing $C_0$ if necessary,
the bounds $0<\mu<m$ and $\|u\|_E\le C_0\sqrt a$
therefore hold for every such minimizer.
The estimates in Section~\ref{sec:recovery} therefore hold uniformly for $u=G_\delta(v)$ over all minimizers $v$ of $J_\delta$ on $X_a$.

\section{Proof of Theorem~\ref{thm:main}}\label{sec:recovery}
Let $(u_a^\delta,\mu_a^\delta)$ be the normalized solution of the truncated problem obtained in Proposition~\ref{prop:PS}. To complete the proof of Theorem~\ref{thm:main}, we shall prove the uniform estimate
\[
\norm{u_a^\delta}_\infty+\norm{u_a^\delta}_{H^1}
\leq C\sqrt a
\]
for sufficiently small $a>0$, with $C$ independent of $a$. Since the truncated and original nonlinearities coincide on $[0,\delta]$, the $L^\infty$ bound will also ensure that $u_a^\delta$ solves the original equation for sufficiently small $a$.

We start from the bound $\norm{u_a^\delta}_E\le C_0\sqrt a$ in Proposition~\ref{prop:PS}. By \eqref{eq:normequiv} and the Sobolev embeddings $H^{\frac12}(\R^3;\C^4)\hookrightarrow L^r(\R^3;\C^4)$, $2\le r\le3$, we have
\[
\norm{u_a^\delta}_r\le C_r\sqrt a,\quad 2\le r\le3.
\]
We then apply the $L^r$ regularity estimates for the free Dirac operator to the truncated equation and iterate Sobolev embeddings to establish the $L^\infty$ estimate. Combining this estimate with the mass constraint, we estimate the weighted nonlinear term in $L^2(\R^3;\C^4)$. It follows from the equation that $\norm{H_0u_a^\delta}_2\le C\sqrt a$, which implies the required $H^1$ estimate.

\begin{lemma}\label{lem:LrDirac}
Let $1<r<\infty$. Then
\[
H_0^{-1}\in\mathcal L\bigl(L^r(\R^3;\C^4),W^{1,r}(\R^3;\C^4)\bigr),
\]
and there exists $C_r>0$ such that
\begin{equation}\label{eq:H0-inverse-Lr}
\norm{H_0^{-1}f}_{W^{1,r}}\le C_r\norm{f}_r,\quad f\in L^r(\R^3;\C^4).
\end{equation}
\end{lemma}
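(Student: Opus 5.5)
The plan is to use the algebraic identity $H_0^2=(-\Delta+m^2)I_4$ to write
\[
H_0^{-1}=H_0(-\Delta+m^2)^{-1},
\]
and then reduce the claim to $L^r$ bounds for Bessel-potential type Fourier multipliers. Concretely, for $f\in L^r\cap L^2$ (a dense class), set $u=H_0^{-1}f$. In Fourier variables,
\[
\widehat u(\xi)=\frac{\alpha\cdot\xi+m\beta}{m^2+|\xi|^2}\,\widehat f(\xi).
\]
It then suffices to show that $u$ and $\partial_j u$, $j=1,2,3$, are obtained from $f$ by applying matrix-valued multipliers bounded on $L^r(\R^3;\C^4)$ for $1<r<\infty$. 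Once that is done, \eqref{eq:H0-inverse-Lr} holds on the dense class and extends to all of $L^r$ by continuity. The extended operator agrees with the spectral-theoretic $H_0^{-1}$ on $L^2\cap L^r$, which is consistent because $0\notin\sigma(H_0)$.

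For $u$ itself, the entries of the symbol are $m/(m^2+|\xi|^2)$ and $\xi_k/(m^2+|\xi|^2)$. The first is the Bessel kernel $G_2$ (suitably scaled by $m$), which lies in $L^1(\R^3)$, so Young's inequality gives the bound. For the second, $\partial_k G_2$ is $O(|x|^{-2})$ near $0$ and decays exponentially at infinity, hence is also integrable in three dimensions, and Young's inequality applies again.

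For the derivatives, I would write
\[
\widehat{\partial_j u}(\xi)=i\xi_j\,\frac{\alpha\cdot\xi+m\beta}{m^2+|\xi|^2}\,\widehat f(\xi).
\]
The entries of this symbol are $m\xi_j/(m^2+|\xi|^2)$, which is handled as above, and $\xi_j\xi_k/(m^2+|\xi|^2)$. The latter is the key term. I will verify the Mikhlin condition for it,
\[
|\partial_\xi^\gamma m(\xi)|\le C_\gamma|\xi|^{-|\gamma|},\qquad |\gamma|\le2,
\]
by noting that it is a smooth symbol of order $0$, homogeneous-like at infinity and smooth near the origin. Alternatively, I would factor it as the Riesz transform composition $R_jR_k$ applied to the multiplier $|\xi|^2/(m^2+|\xi|^2)$. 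Here $R_jR_k$ is bounded on $L^r$ by Calderón--Zygmund theory, and
\[
\frac{|\xi|^2}{m^2+|\xi|^2}=1-\frac{m^2}{m^2+|\xi|^2}
\]
is the identity minus an $L^1$ convolution.

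The main obstacle is only the $\xi_j\xi_k/(m^2+|\xi|^2)$ term, which requires singular-integral theory rather than Young's inequality. This is standard, so I would cite the Mikhlin--Hörmander multiplier theorem, for example from Grafakos or Stein. The resulting constant $C_r$ depends only on $r$ and $m$.
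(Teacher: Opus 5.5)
Your proposal is correct and follows essentially the same route as the paper: it writes $H_0^{-1}$ and $\partial_jH_0^{-1}$ as Fourier multipliers with symbols $\frac{\alpha\cdot\xi+m\beta}{m^2+|\xi|^2}$ and $\frac{\xi_j(\alpha\cdot\xi+m\beta)}{m^2+|\xi|^2}$, checks the Mikhlin bounds for $|\gamma|\le2$, and extends by density. The only difference is that you handle the lower-order entries with the integrable Bessel kernel and Young's inequality, whereas the paper applies the Mikhlin theorem to every entry; this is an inessential variation.
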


\begin{proof}
It suffices to establish the estimate for $f$ in the Schwartz class $\mathcal S(\R^3;\C^4)$, since the general case follows by density. By the anticommutation relations, we have
\[
(\alpha\cdot\xi+m\beta)^2=(|\xi|^2+m^2)I_4,
\]
and hence
\[
\widehat{H_0^{-1}f}(\xi)=M_0(\xi)\widehat f(\xi),\quad M_0(\xi)=\frac{\alpha\cdot\xi+m\beta}{m^2+|\xi|^2}.
\]
For every multi-index $\gamma$ with $|\gamma|\le2$,
\[
\norm{\partial_\xi^\gamma M_0(\xi)}_{\mathrm{op}} \le C_\gamma(1+|\xi|)^{-1-|\gamma|}.
\]
Moreover, for $j=1,2,3$,
\[
\widehat{\partial_j H_0^{-1}f}(\xi)=iM_j(\xi)\widehat f(\xi),\quad M_j(\xi)=\frac{\xi_j(\alpha\cdot\xi+m\beta)}{m^2+|\xi|^2},
\]
and the matrices $M_j$ satisfy
\[
\norm{\partial_\xi^\gamma M_j(\xi)}_{\mathrm{op}}\le C_\gamma(1+|\xi|)^{-|\gamma|},\quad|\gamma|\le2.
\]
Applying Mikhlin multiplier theorem~\cite{Grafakos2014}
to the entries of $M_0$ and $M_j$, $j=1,2,3$, we obtain
\[
\norm{H_0^{-1}f}_r+\sum_{j=1}^3\norm{\partial_j H_0^{-1}f}_r\le C_r\norm{f}_r,
\]
which proves \eqref{eq:H0-inverse-Lr}.
\end{proof}

\begin{proposition}\label{prop:Linfty}
For every $M>0$, there exist constants $a_3>0$ and $C_{\delta,M}>0$, depending only on $m,b,q,p,\delta$ and $M$, such that if $(\mu,u)\in(0,m)\times E$ satisfies
\begin{equation}\label{eq:generic-truncated}
H_0u=\mu u+|x|^{-b}f_\delta(|u|)u\quad\text{in }E^*
\end{equation}
and
\[
\norm{u}_2^2=a\in(0,a_3),\quad \norm{u}_E\le M\sqrt a,
\]
then $u\in H^1(\R^3;\C^4)\cap L^\infty(\R^3;\C^4)$ and
\begin{equation}\label{eq:uniform-regularity}
\norm{u}_{H^1}+\norm{u}_\infty\le C_{\delta,M}\sqrt a.
\end{equation}
Consequently, if
\[
a<\min\left\{a_3,\left(\frac{\delta}{C_{\delta,M}}\right)^2\right\},
\]
then $\norm{u}_\infty<\delta$, the truncation is inactive, and
\[
f_\delta(|u|)=|u|^{q-2}-|u|^{p-2}\quad\text{a.e. in }\R^3.
\]
In particular, taking $M=C_0$, the estimate applies to the solutions $u_a^\delta$ obtained in Proposition~\ref{prop:PS} whenever $0<a<\min\{a_2,a_3\}$.
\end{proposition}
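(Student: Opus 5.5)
The argument is an $L^r$ bootstrap for the fixed-point form of \eqref{eq:generic-truncated}. Since $0<\mu<m$ and $0\notin\sigma(H_0)$, rewrite the equation as
\[
u=H_0^{-1}\bigl(\mu u+g\bigr),\qquad g:=|x|^{-b}f_\delta(|u|)u,
\]
which is legitimate once $g$ is shown to lie in some $L^r$ space. By \eqref{eq:trunc-growth}, $|g|\le C_\delta|x|^{-b}|u|^{q-1}$. The starting point is $\norm{u}_r\le C_rM\sqrt a$ for $2\le r\le3$, which follows from \eqref{eq:normequiv} and Sobolev embedding. Lemma~\ref{lem:LrDirac} then gives
\[
\norm{u}_{W^{1,s}}\le C_s\bigl(\mu\norm{u}_s+\norm{g}_s\bigr).
\]
Combined with $W^{1,s}\hookrightarrow L^{s^*}$ (or $L^\infty$ when $s>3$), this raises the integrability of $u$ at each step.

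\textbf{Bounding $g$.} Split $\R^3=B_1\cup(\R^3\setminus B_1)$. Outside $B_1$ the weight is at most $1$, so
\[
\norm{g}_{L^s(\R^3\setminus B_1)}\le C\norm{u}_{s(q-1)}^{q-1}.
\]
Inside $B_1$, Hölder gives
\[
\norm{g}_{L^s(B_1)}\le\norm{|x|^{-b}}_{L^{\theta}(B_1)}\norm{u}_{\rho(q-1)}^{q-1},\qquad \frac1s=\frac1\theta+\frac1\rho,\quad \theta<\frac3b .
\]
Because $b<1$, the weight costs only a loss $\frac1\theta>\frac b3$ in the exponent. Suppose $u\in L^r$ for all $r\in[2,R]$ with $\norm{u}_r\le C\sqrt a$. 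Take $s$ with $\frac1s=\frac{q-1}{R}+\frac b3+\eta$ for small $\eta>0$. If $s>1$, then $u\in W^{1,s}$ and hence $u\in L^{s^*}$ with
\[
\frac1{s^*}=\frac1s-\frac13=\frac{q-1}R+\frac{b-1}3+\eta .
\]
This yields a gain $\frac1R-\frac1{s^*}$ that is bounded below uniformly whenever $\frac{q-2}{R}<\frac{1-b}{3}-\eta$. At $R=3$ this condition reads $q-2<1-b-3\eta$, which is exactly $q<3-b$ once $\eta$ is small. The map $1/R\mapsto (q-1)/R+(b-1)/3$ has fixed point $1/R_*=(1-b)/(3(q-2))>1/3$ and is expanding away from it, so starting from $1/R=1/3<1/R_*$ the iterates decrease by at least a fixed amount per step. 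After finitely many steps, with the number depending only on $b,q$, we reach $s>3$ and therefore $u\in W^{1,s}\hookrightarrow L^\infty$.

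\textbf{Smallness.} Each step bounds $\norm{u}$ by $C(\sqrt a+(\sqrt a)^{q-1})\le C\sqrt a$ for $a<a_3\le1$. Interpolation with $\norm{u}_2$ supplies all intermediate $L^r$ norms. So the smallness of $a$ is used only to absorb the superlinear powers into a constant times $\sqrt a$.

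\textbf{$H^1$ bound.} Once $u\in L^\infty$, the argument of Lemma~\ref{lem:original-regular-branch} with $|f_\delta(\sigma)\sigma|\le C\sigma^{q-1}$ gives
\[
\norm{g}_2\le C\bigl(\norm{|x|^{-b}}_{L^2(B_1)}\norm{u}_\infty^{q-1}+\norm{u}_\infty^{q-2}\norm{u}_2\bigr)\le C\sqrt a .
\]
Lemma~\ref{lem:LrDirac} with $r=2$ then yields $\norm{u}_{H^1}\le C\sqrt a$.

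\textbf{Main obstacle.} Rigor requires justifying that the distributional equation in $E^*$ coincides with $u=H_0^{-1}(\mu u+g)$. The approach is to note that $g\in L^{s}$ for some $s>1$ at the first step, so $H_0^{-1}(\mu u+g)$ is a well-defined tempered distribution. Since $H_0$ is injective on tempered distributions (its symbol is invertible), this distribution equals $u$.

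\textbf{Consequence.} If $a<(\delta/C_{\delta,M})^2$, then $\norm{u}_\infty<\delta$. By \eqref{eq:trunc-equal}, $f_\delta(|u|)=|u|^{q-2}-|u|^{p-2}$ a.e. Taking $M=C_0$ and using the bound from Proposition~\ref{prop:PS}, this applies to $u_a^\delta$.
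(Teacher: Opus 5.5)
There is a gap in the step \textbf{Bounding $g$}. Your overall scheme is the paper's: the fixed-point form $u=H_0^{-1}(\mu u+g)$, Lemma~\ref{lem:LrDirac}, a bootstrap with uniform gain $\frac{3-b-q}{3}-\eta$, and then the $L^2$ bound on $g$ for $H^1$. The problem is that you use a single exponent $s$ for everything, and you have no control of $u$ below $L^2$.

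The inner H\"older estimate forces $\frac1s=\frac{q-1}{R}+\frac b3+\eta$. At the first step $R=3$ this exceeds $\frac12$ whenever $q+b\ge\frac52$. Take $b=0.9$, $q=2.05$, which is admissible since $q<3-b$. Then $\frac1s=0.65+\eta$, so $s<2$, and two things break.
\begin{itemize}
\item[(a)] In $\norm{u}_{W^{1,s}}\le C_s(\mu\norm{u}_s+\norm{g}_s)$ the term $\norm{u}_s$ with $s<2$ is not controlled by $u\in L^2\cap L^3$. So ``$u\in W^{1,s}$'' is unjustified.
\item[(b)] Your outer estimate $\norm{g}_{L^s(\R^3\setminus B_1)}\le C\norm{u}_{s(q-1)}^{q-1}$ needs $u\in L^{s(q-1)}$ with $s(q-1)=\frac{1.05}{0.65+\eta}<2$, which again is not available.
\end{itemize}
In general (b) fails at $R=3$ as soon as $q-1\le 2b$. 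Replacing $|x|^{-b}$ by $1$ off $B_1$ throws away exactly the decay of the weight that is needed there.

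The paper repairs both points as follows.
\begin{itemize}
\item It applies $H_0^{-1}$ to the two terms separately: $u=g_1+g_2$ with $g_1=H_0^{-1}(\mu u)\in W^{1,R}$ and $g_2=H_0^{-1}g\in W^{1,\tau}$. The first piece is harmless because $R\ge3$, so $g_1\in L^t$ for all $t\ge R$ (all finite $t\ge3$ when $R=3$).
\item It places $g$ in one space $L^\tau$, $\frac1\tau=\frac{q-1}{R}+\frac b3+\nu$, using two different H\"older splits.
\begin{itemize}
\item Inside $B_1$: $|x|^{-b}\in L^{\theta_0}(B_1)$ with $\frac1{\theta_0}=\frac b3+\nu$, against $\norm{u}_R$.
\item Outside $B_1$: $|x|^{-b}\in L^{\theta_\infty}(\R^3\setminus B_1)$ with $\frac1{\theta_\infty}=\frac b3-\nu$, which is integrable at infinity since $b\theta_\infty>3$. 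This is paired against $\norm{u}_{r_0}$ with $\frac1{r_0}=\frac1R+\frac{2\nu}{q-1}\le\frac12$. The restriction $\nu<\frac{q-1}{12}$ is what keeps $r_0\ge2$.
\end{itemize}
\end{itemize}
Alternatively, you could put $g\mathbf 1_{\R^3\setminus B_1}$ in its own space $L^{R/(q-1)}$, which is allowed since $R/(q-1)>1$, and sum three pieces.

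With this change, the rest of your argument goes through: the gain computation, the finite iteration, the smallness bookkeeping, the $\mathcal S'$ justification and the $H^1$ step. One further detail: the iteration can land exactly on $s=3$, where $W^{1,3}\not\hookrightarrow L^\infty$. This costs one extra step, as in the paper.
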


\begin{proof}
All constants below may depend on $m,b,q,p,\delta$ and $M$, but are independent of $a$, $\mu$ and $u$. We may assume that $0<a\le1$, so that
$a^{\frac{q-1}{2}}\le\sqrt a$. By \eqref{eq:trunc-growth},
\[
\bigl||x|^{-b}f_\delta(|u|)u\bigr|\le C_\delta|x|^{-b}|u|^{q-1}.
\]
Combining \eqref{eq:normequiv} with the embeddings $H^{\frac12}(\R^3;\C^4)\hookrightarrow L^r(\R^3;\C^4)$, $2\le r\le3$, and the bound $\norm{u}_E\le M\sqrt a$, we deduce that
\[
\norm{u}_r\le C\sqrt a,\quad2\le r\le3.
\]

Choose
\begin{equation}\label{eq:nu-choice}
0<\nu<\min\left\{\frac b6,\,\frac{1-b}{6},\,\frac{q-1}{12},\,\frac{3-b-q}{6}\right\}.
\end{equation}
We now apply a bootstrap argument, starting from $R=3$.
Assume that, for some $R\ge3$,
\[
\norm{u}_2+\norm{u}_R\le C_R\sqrt a.
\]
Define
\[
\frac1{\theta_0}=\frac b3+\nu,\quad\frac1{\theta_\infty}=\frac b3-\nu,\quad\frac1{r_0}=\frac1R+\frac{2\nu}{q-1},
\quad\frac1\tau=\frac{q-1}{R}+\frac b3+\nu.
\]
Then $|x|^{-b}\in L^{\theta_0}(B_1;\R)$ and
$|x|^{-b}\in L^{\theta_\infty}(\R^3\setminus B_1;\R)$.
Moreover, it follows from \eqref{eq:nu-choice} that
\[
\frac1R\le\frac1{r_0}\le\frac13+\frac16=\frac12,
\]
and hence $2\le r_0\le R$. Interpolating between
$L^2(\R^3;\C^4)$ and $L^R(\R^3;\C^4)$, we have
$\norm{u}_{r_0}\le C_R\sqrt a$. Since
\[
\frac1\tau=\frac{q-1}{R}+\frac1{\theta_0}
=\frac{q-1}{r_0}+\frac1{\theta_\infty},
\]
we obtain from H\"older's inequality
\[
\norm{|x|^{-b}|u|^{q-1}}_{L^\tau(B_1)}\le C_R a^{\frac{q-1}{2}},\quad
\norm{|x|^{-b}|u|^{q-1}}_{L^\tau(\R^3\setminus B_1)}\le C_R a^{\frac{q-1}{2}}.
\]
Therefore,
\begin{equation}\label{eq:bootstrap-source}
\norm{|x|^{-b}|u|^{q-1}}_\tau\le C_R a^{\frac{q-1}{2}},\quad\frac1\tau=\frac{q-1}{R}+\frac b3+\nu.
\end{equation}
Indeed,
\[
\frac1\tau\le\frac{q-1+b}{3}+\nu<\frac23+\frac16<1,
\]
where we used $q<3-b$ and \eqref{eq:nu-choice}. Hence $\tau>1$.

Equation~\eqref{eq:generic-truncated} holds in $\mathcal S'(\R^3;\C^4)$. Since the symbol $M_0$ defined in the proof of Lemma~\ref{lem:LrDirac} is smooth and all its derivatives have at most polynomial growth, $H_0^{-1}$ is well defined as a Fourier multiplier on $\mathcal S'(\R^3;\C^4)$. Using $M_0(\xi)(\alpha\cdot\xi+m\beta)=I_4$, we have $H_0^{-1}H_0u=u$ in $\mathcal S'(\R^3;\C^4)$. Moreover,
\[
\mu u\in L^R(\R^3;\C^4)\quad\text{and}\quad|x|^{-b}f_\delta(|u|)u\in L^\tau(\R^3;\C^4).
\]
Applying $H_0^{-1}$ to \eqref{eq:generic-truncated}, we obtain
\[
u=g_1+g_2,\quad g_1=H_0^{-1}(\mu u),\quad g_2=H_0^{-1}\bigl(|x|^{-b}f_\delta(|u|)u\bigr).
\]
By \eqref{eq:trunc-growth}, \eqref{eq:H0-inverse-Lr}, \eqref{eq:bootstrap-source}, and $0<\mu<m$, we have
\begin{equation}\label{eq:split-estimates}
\norm{g_1}_{W^{1,R}}\le C_R\sqrt a,\quad\norm{g_2}_{W^{1,\tau}}\le C_R a^{\frac{q-1}{2}},
\end{equation}
where the constants are uniform for $\mu\in(0,m)$. Suppose that $1<\tau<3$ and set $R'=\tau^*=\frac{3\tau}{3-\tau}$. Then
\[
\frac1{R'}=\frac{q-1}{R}-\frac{1-b}{3}+\nu
\]
and
\begin{equation}\label{eq:uniform-descent}
\frac1{R'}-\frac1R=\frac{q-2}{R}-\frac{1-b}{3}+\nu\le-\left(\frac{3-b-q}{3}-\nu\right)<0.
\end{equation}
Thus $R'>R$. By the embedding
$W^{1,\tau}(\R^3;\C^4)\hookrightarrow L^{R'}(\R^3;\C^4)$,
we have $\norm{g_2}_{R'}\le C_R a^{\frac{q-1}{2}}$.
If $R=3$, we apply the Gagliardo--Nirenberg inequality to obtain
\[
\norm{g_1}_s\le C_s\sqrt a
\quad\text{for every }3\le s<\infty.
\]
If $R>3$, it follows from the embedding $W^{1,R}(\R^3;\C^4)\hookrightarrow L^\infty(\R^3;\C^4)$ and interpolation with the $L^R$ bound in \eqref{eq:split-estimates} that
\[
\norm{g_1}_{R'}\le C_{R'}\sqrt a.
\]
Since $a^{\frac{q-1}{2}}\le\sqrt a$, we deduce that
\[
\norm{u}_{R'}\le C_{R'}\sqrt a.
\]

Starting from $R_0=3$, define
\[
\frac1{\tau_k}=\frac{q-1}{R_k}+\frac b3+\nu,\quad R_{k+1}=\frac{3\tau_k}{3-\tau_k}\quad\text{whenever }\tau_k<3.
\]
By \eqref{eq:nu-choice}, we have $1<\tau_0<3$, so $R_1$ is well defined.
From \eqref{eq:uniform-descent}, we obtain $R_1>R_0=3$ and 
\[
\frac1{R_{k+1}}\le\frac1{R_k}-\left(\frac{3-b-q}{3}-\nu\right)
\]
whenever $\tau_k<3$. Since $\frac1{R_k}\ge0$ and
$\frac{3-b-q}{3}-\nu>0$, there exists an index $k_0\ge1$
such that $\tau_{k_0}\ge3$.
If $\tau_{k_0}>3$, we use Morrey--Sobolev embedding and
\eqref{eq:split-estimates} to obtain
\[
\norm{g_2}_\infty\le C a^{\frac{q-1}{2}},\quad\norm{g_1}_\infty\le C\sqrt a,
\]
and hence $\norm{u}_\infty\le C(\sqrt a+a^{\frac{q-1}{2}})$. If $\tau_{k_0}=3$, we apply Gagliardo--Nirenberg inequality
to $g_2$ and deduce that, for every finite $s\ge3$,
\[
\norm{g_2}_s\le C_s\norm{\nabla g_2}_3^{1-\frac3s}\norm{g_2}_3^{\frac3s}\le C_s a^{\frac{q-1}{2}}.
\]
Since $R_{k_0}>3$, we also have
\[
g_1\in W^{1,R_{k_0}}(\R^3;\C^4)\hookrightarrow L^\infty(\R^3;\C^4).
\]
Interpolating between the $L^3$ and $L^{R_{k_0}}$
bounds for $u$, we obtain $\norm{u}_s\le C_s\sqrt a$
for $3\le s\le R_{k_0}$.
For $s>R_{k_0}$, we have $\norm{g_1}_s\le C_s\sqrt a$
by interpolation between its $L^{R_{k_0}}$ and $L^\infty$ bounds.
Combining this estimate with $\norm{g_2}_s\le C_s a^{\frac{q-1}{2}}$
and using $0<a\le1$, we conclude that
$\norm{u}_s\le C_s\sqrt a$ for every finite $s\ge3$.
Choose $s>R_{k_0}$ sufficiently large so that
\[
\frac{q-1}{s}+\frac b3+\nu<\frac13.
\]
 Taking $R=s$ in \eqref{eq:bootstrap-source}, we have $\tau>3$.
Using \eqref{eq:split-estimates} and Morrey--Sobolev embedding,
we therefore obtain $\norm{u}_\infty\le C(\sqrt a+a^{\frac{q-1}{2}})$.

Since $a^{\frac{q-1}{2}}\le\sqrt a$, we deduce that $\norm{u}_\infty\le C\sqrt a$. The number of bootstrap steps and all constants involved depend only on $m,b,q,p,\delta$, and $M$. We may therefore fix $a_3\in(0,1]$ so that the estimate holds for every $a\in(0,a_3)$.

It remains to estimate $\norm{u}_{H^1}$.
Since $|x|^{-b}\le1$ on $\R^3\setminus B_1$ and
$|x|^{-b}\in L^2(B_1;\R)$ for $b<1$, we obtain from
\eqref{eq:trunc-growth}
\begin{equation}\label{eq:nonlinear-L2-bound}
\norm{|x|^{-b}f_\delta(|u|)u}_2\le C_\delta\left(\norm{|x|^{-b}}_{L^2(B_1)}\norm{u}_\infty^{q-1}
+\norm{u}_\infty^{q-2}\norm{u}_2\right)\le C a^{\frac{q-1}{2}}\le C\sqrt a.
\end{equation}
By \eqref{eq:nonlinear-L2-bound} and $0<\mu<m$, the right-hand side of \eqref{eq:generic-truncated} belongs to $L^2(\R^3;\C^4)$. Hence
\[
\norm{H_0u}_2\le m\norm{u}_2+\norm{|x|^{-b}f_\delta(|u|)u}_2\le C\sqrt a.
\]
Since $u\in L^2(\R^3;\C^4)$ and
$H_0u\in L^2(\R^3;\C^4)$ in the sense of distributions,
it follows from the Fourier representation of $H_0$ that
$u\in H^1(\R^3;\C^4)$.
By Plancherel's theorem and
$(\alpha\cdot\xi+m\beta)^*(\alpha\cdot\xi+m\beta)
=(|\xi|^2+m^2)I_4$, we also have
\[
\norm{H_0u}_2^2=\norm{\nabla u}_2^2+m^2\norm{u}_2^2.
\]
Consequently,
\[
\norm{u}_{H^1}\le C\bigl(\norm{H_0u}_2+\norm{u}_2\bigr)\le C\sqrt a.
\]
Combining this estimate with the $L^\infty$ bound, we obtain
\eqref{eq:uniform-regularity} after increasing the constant
$C_{\delta,M}$ if necessary.
\end{proof}

\begin{proof}[Completion of the proof of Theorem~\ref{thm:main}]
Let $a_3>0$ and $C_{\delta,C_0}>0$ be the constants in
Proposition~\ref{prop:Linfty} for $M=C_0$. Define
\[
a^*:=\min\left\{a_2,\,a_3,\,\left(\frac{\delta}{C_{\delta,C_0}}\right)^2\right\}.
\]
 For $a\in(0,a^*)$, we obtain from Proposition~\ref{prop:PS}
a pair $(\mu_a^\delta,u_a^\delta)\in(0,m)\times E$ such that
\[
\norm{u_a^\delta}_2^2=a,\quad I_\delta(u_a^\delta)=e_a^\delta,\quad\norm{u_a^\delta}_E\le C_0\sqrt a.
\]
By Proposition~\ref{prop:Linfty}, we have
\[
\norm{u_a^\delta}_\infty\le C_{\delta,C_0}\sqrt a<\delta,
\]
and therefore
\[
f_\delta(|u_a^\delta|)=|u_a^\delta|^{q-2}-|u_a^\delta|^{p-2},\quad
F_\delta(|u_a^\delta|)=F(|u_a^\delta|)\quad\text{a.e. in }\R^3.
\]
Applying Lemma~\ref{lem:original-regular-branch}, we deduce that $u_a^\delta\in\mathcal D_p$ and that the nonlinear term in the original equation belongs to $L^2(\R^3;\C^4)$. Thus $(\mu_a^\delta,u_a^\delta)$ satisfies \eqref{eq:main}. Since $F_\delta(|u_a^\delta|)=F(|u_a^\delta|)$ a.e. in $\R^3$, we also have
\[
I(u_a^\delta)=I_\delta(u_a^\delta)=e_a^\delta.
\]
By \eqref{eq:truncated-properties},
\[
0<I(u_a^\delta)<\frac{ma}{2}.
\]
Using \eqref{eq:uniform-regularity}, we obtain
\[
\norm{u_a^\delta}_{H^1}+\norm{u_a^\delta}_\infty\le C\sqrt a
\]
with $C$ independent of $a$. Since $\delta$ was fixed in terms of $p$ and $q$, all thresholds and constants above depend only on $m,b,q$, and $p$. The conclusion follows by setting $(u_a,\mu_a)=(u_a^\delta,\mu_a^\delta)$.
\end{proof}

\section{Proof of Theorem~\ref{thm:small-mass}}\label{sec:small-mass}
Throughout this section, we fix $m,b,q,p,\delta$ and study the small mass behavior of an arbitrary family of reduced minimizers. Using interpolation estimates and the properties of the limiting Schr\"odinger problem, we estimate the reduced energy on the scale $a\varepsilon_a^2$ and prove compactness after rescaling. We then derive the asymptotic formulas for the energy, the frequency and the lower component of the original spinor.

By Proposition~\ref{prop:PS}, $J_\delta$ admits a minimizer on $X_a$ for every sufficiently small $a>0$. For each such $a$, choose any minimizer $v_a$ at the level $e_a^\delta$, and let $(u_a,\mu_a)$ be the corresponding solution obtained in Section~\ref{sec:recovery}. Then
\begin{equation}\label{eq:limit-decomposition}
\begin{aligned}
u_a&=G_\delta(v_a)=t_av_a+w_a,\\
w_a&=w_\delta(v_a)\in E^-,
\quad
t_a=\left(1-\frac{\norm{w_a}_2^2}{a}\right)^{\frac{1}{2}}.
\end{aligned}
\end{equation}
Since the truncation is inactive for $u_a$ when $a$ is sufficiently
small, we have
\[
I(u_a)=I_\delta(u_a)=J_\delta(v_a)=e_a^\delta.
\]
As $v_a\in E^+$, there exists a unique
$\omega_a\in H^{\frac{1}{2}}(\R^3;\C^2)$ such that
\begin{equation}\label{eq:limit-FW-coordinate}
v_a=U_{\rm FW}^{-1}\binom{\omega_a}{0},
\quad
\norm{\omega_a}_2^2=\norm{v_a}_2^2=a.
\end{equation}
With $\varepsilon_a$ as defined in the introduction, set
\begin{equation}\label{eq:normalized-dilation}
(\mathscr D_a z)(x)=a^{-\frac{1}{2}}\varepsilon_a^{-\frac{3}{2}}z\left(\frac{x}{\varepsilon_a}\right).
\end{equation}
Then $\norm{\mathscr D_a z}_2^2=a^{-1}\norm{z}_2^2$. In particular, the rescaled solution in
\eqref{eq:main-rescaling} satisfies
\[
\widetilde u_a=\mathscr D_a u_a,\quad\norm{\widetilde u_a}_2^2=1.
\]

\subsection{The limiting Schr\"{o}dinger problem}
For $2<r\le3-b$, set
\[
\gamma_r=\frac{3(r-2)}2+b.
\]
Note that $0<\gamma_r<\frac{r}{2}$ when $r<3-b$, while $\gamma_{3-b}=\frac{3-b}{2}$. We also fix the auxiliary exponent
\begin{equation}\label{eq:r-star}
r_*=q+\frac12\min\{p-q,3-b-q\}.
\end{equation}
It satisfies
\begin{equation}\label{eq:r-star-range}
q<r_*<\min\{p,3-b\}.
\end{equation}
We shall use the following estimates.

\begin{lemma}\label{lem:limit-interpolation}
Let $2<r\le3-b$ and $z\in H^{\frac12}(\R^3;\C^N)$, where $N\in\{2,4\}$. Then
\begin{equation}\label{eq:weighted-GN-half}
\mathcal A_r(z)\le C_r\norm{z}_2^{r-2\gamma_r}\left(\int_{\R^3}|\xi||\widehat z(\xi)|^2\,d\xi\right)^{\gamma_r}.
\end{equation}
Let $\omega\in H^{\frac12}(\R^3;\C^2)$ satisfy $\norm{\omega}_2^2=a$. Set
\[
v=U_{\rm FW}^{-1}\binom{\omega}{0},\quad K(\omega):=\int_{\R^3}(\lambda(\xi)-m)|\widehat\omega(\xi)|^2\,d\xi.
\]
Then $K(\omega)<\infty$. Moreover, we have
\begin{equation}\label{eq:FW-D-control}
\left(\int_{\R^3}|\xi||\widehat\omega(\xi)|^2\,d\xi\right)^2\le K(\omega)\bigl(K(\omega)+2ma\bigr),
\end{equation}
and
\begin{equation}\label{eq:omega-weighted-K}
\mathcal A_r(\omega)\le C_r a^{\frac r2-\gamma_r}\bigl[K(\omega)\bigl(K(\omega)+2ma\bigr)\bigr]^{\frac{\gamma_r}{2}}.
\end{equation}
In addition,
\begin{equation}\label{eq:FW-close-upper}
\left\|v-\binom{\omega}{0}\right\|_E^2\le C K(\omega),
\end{equation}
and
\begin{equation}\label{eq:FW-Ar-comparison-sharp}
\begin{split}
\mathcal A_r(v)&\le C_r\bigl(\mathcal A_r(\omega)+K(\omega)^{\frac r2}\bigr),\\
\left|\mathcal A_r(v)-\mathcal A_r(\omega)\right|&\le C_r\left(\mathcal A_r(\omega)^{\frac{r-1}{r}}
+K(\omega)^{\frac{r-1}{2}}\right)K(\omega)^{\frac12}.
\end{split}
\end{equation}
\end{lemma}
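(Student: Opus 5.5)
The estimates are proved one at a time, in the order stated.

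\emph{Weighted interpolation.} For \eqref{eq:weighted-GN-half} I use scaling together with Lemma~\ref{lem:weighted}. Write $D(z):=\int|\xi||\widehat z|^2\,d\xi=\norm{z}_{\dot H^{1/2}}^2$ and set $z_\lambda(x)=z(\lambda x)$. Then
\[
\norm{z_\lambda}_2^2=\lambda^{-3}\norm{z}_2^2,\quad D(z_\lambda)=\lambda^{-2}D(z),\quad \mathcal A_r(z_\lambda)=\lambda^{b-3}\mathcal A_r(z).
\]
Lemma~\ref{lem:weighted} gives $\mathcal A_r\le C(\norm{\cdot}_2^2+D)^{r/2}$. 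Apply it to $z_\lambda$ with $\lambda$ chosen so that $\norm{z_\lambda}_2^2=D(z_\lambda)$, that is $\lambda=\norm{z}_2^2/D(z)$. This gives $\mathcal A_r(z)\le C\lambda^{3-b}\,(\lambda^{-2}D)^{r/2}$. Collecting powers of $\lambda$ with exponent $3-b-r$ yields exactly $\norm{z}_2^{r-2\gamma_r}D^{\gamma_r}$, because $2(3-b-r)+r=2(3-b)-r=r-2\gamma_r$ when $\gamma_r=\tfrac{3(r-2)}2+b$. The exponent bookkeeping is the only thing to check here.

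\emph{Control by $K$.} Finiteness of $K(\omega)$ follows from $\lambda-m\le|\xi|$ and $\omega\in H^{1/2}$. For \eqref{eq:FW-D-control} I use the identity $|\xi|^2=(\lambda-m)(\lambda+m)=(\lambda-m)^2+2m(\lambda-m)$. Cauchy--Schwarz gives
\[
\Bigl(\int|\xi||\widehat\omega|^2\Bigr)^2\le\int|\widehat\omega|^2\cdot\int|\xi|^2|\widehat\omega|^2 .
\]
This is not directly in terms of $K$, so I use instead $|\xi|=\sqrt{\lambda-m}\sqrt{\lambda+m}$. Cauchy--Schwarz then gives $D^2\le K\cdot\int(\lambda+m)|\widehat\omega|^2=K(K+2ma)$. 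Combining this with \eqref{eq:weighted-GN-half} and $\norm{\omega}_2^2=a$ gives \eqref{eq:omega-weighted-K}.

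\emph{Comparison between $v$ and $\omega$.} For \eqref{eq:FW-close-upper} I use the Fourier formula from the proof of Lemma~\ref{lem:FW-low}:
\[
\widehat v-\binom{\widehat\omega}{0}=\binom{(\rho_+-1)\widehat\omega}{\rho_-S\widehat\omega}.
\]
The $E$-norm is comparable to $\int\lambda|\cdot|^2$. Here $(1-\rho_+)^2\le\rho_-^2$ and $\lambda\rho_-^2=\tfrac{\lambda-m}{2}$, so the squared norm is at most $C\int(\lambda-m)|\widehat\omega|^2=CK(\omega)$. Lemma~\ref{lem:weighted} then gives $\mathcal A_r(v-(\omega,0))\le CK^{r/2}$, and the first line of \eqref{eq:FW-Ar-comparison-sharp} follows from $|A+B|^r\le C(|A|^r+|B|^r)$. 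For the second line I write $\Delta=v-(\omega,0)$ and use the mean value inequality
\[
\bigl||y+\Delta|^r-|y|^r\bigr|\le C(|y|^{r-1}+|\Delta|^{r-1})|\Delta|.
\]
H\"older's inequality in $L^r(|x|^{-b}dx)$ with exponents $r/(r-1)$ and $r$ then bounds the difference by $C(\mathcal A_r(\omega)^{\frac{r-1}r}+\mathcal A_r(\Delta)^{\frac{r-1}r})\mathcal A_r(\Delta)^{\frac1r}$. Inserting $\mathcal A_r(\Delta)\le CK^{r/2}$ gives the stated bound.

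\emph{Main obstacle.} I expect the only delicate point to be \eqref{eq:FW-D-control}. It needs the factorization $|\xi|^2=(\lambda-m)(\lambda+m)$ so that the estimate involves $K$ and not an $H^1$ norm. This matters because $\omega$ is only assumed to be in $H^{1/2}$, unlike in Lemma~\ref{lem:FW-low}.
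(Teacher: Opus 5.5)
Your proposal is correct. From \eqref{eq:FW-D-control} onward it is essentially the paper's argument: the factorization $|\xi|=\sqrt{\lambda-m}\sqrt{\lambda+m}$ with Cauchy--Schwarz, the Foldy--Wouthuysen formula with $(1-\rho_+)^2\le\rho_-^2$ and $\lambda\rho_-^2=\frac{\lambda-m}{2}$, and the mean-value/H\"older comparison. There you expand around $\binom{\omega}{0}$ rather than bounding $\mathcal A_r(v)$ through the first line, which is equivalent.

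The genuine difference is in \eqref{eq:weighted-GN-half}. The paper proves it directly. H\"older with $\ell_r=\frac{r-2b}{1-b}$ gives the bound $\mathcal A_r(z)\le\bigl(\int_{\R^3}|x|^{-1}|z|^2\,dx\bigr)^b\bigl(\int_{\R^3}|z|^{\ell_r}\,dx\bigr)^{1-b}$. Fractional Hardy, $\dot H^{\frac12}\hookrightarrow L^3$, and $L^2$--$L^3$ interpolation then produce the exponents.

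You instead take the inhomogeneous bound $\mathcal A_r(z)\le C_r(\norm{z}_2^2+D(z))^{r/2}$ from Lemma~\ref{lem:weighted} and optimize over the dilations $z(\lambda\,\cdot)$. This is shorter, and it shows that the exponents are forced by scaling. For $r<3-b$ it even avoids Hardy, since that part of Lemma~\ref{lem:weighted} uses only H\"older on $B_1$ and Sobolev. The paper's route is self-contained and makes the Hardy and Sobolev contributions explicit.

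Two small repairs are needed.

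\begin{itemize}
\item Your bookkeeping identity is wrong as written: $2(3-b-r)+r=6-2b-r$, whereas $r-2\gamma_r=6-2b-2r$. The correct computation starts from $\mathcal A_r(z)\le C\lambda^{3-b-r}D^{r/2}$ with $\lambda=\norm{z}_2^2/D$. This gives the exponent $2(3-b-r)=r-2\gamma_r$ on $\norm{z}_2$ and the exponent $\frac r2-(3-b-r)=\gamma_r$ on $D$, so the conclusion stands.
\item Treat $z=0$ separately, since then $D(z)=0$ and $\lambda$ is undefined.
\end{itemize}
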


\begin{proof}
Put $\ell_r=\frac{r-2b}{1-b}$. Since $2<r\le3-b$ and $0<b<1$, we have $2<\ell_r\le3$. Applying H\"{o}lder's inequality, we deduce that
\[
\mathcal A_r(z)\le\left(\int_{\R^3}|x|^{-1}|z|^2\,dx\right)^b\left(\int_{\R^3}|z|^{\ell_r}\,dx\right)^{1-b}.
\]
By fractional Hardy inequality, the embedding
$\dot H^{\frac12}(\R^3;\C^N)\hookrightarrow L^3(\R^3;\C^N)$,
and interpolation between $L^2(\R^3;\C^N)$ and
$L^3(\R^3;\C^N)$, we obtain
\[
\int_{\R^3}|x|^{-1}|z|^2\,dx\le C\norm{z}_{\dot H^{\frac12}}^2,\quad
\norm{z}_{\ell_r}^{\ell_r}\le C\norm{z}_2^{6-2\ell_r}\norm{z}_{\dot H^{\frac12}}^{3(\ell_r-2)}.
\]
Since
\[
(1-b)(6-2\ell_r)=r-2\gamma_r,\quad b+\frac32(1-b)(\ell_r-2)=\gamma_r,
\]
we deduce \eqref{eq:weighted-GN-half}.

As $0\le\lambda(\xi)-m\le|\xi|$ and $\omega\in H^{\frac12}(\R^3;\C^2)$, we have $K(\omega)<\infty$. Using $|\xi|^2=(\lambda(\xi)-m)(\lambda(\xi)+m)$
and Cauchy--Schwarz inequality, we obtain
\[
\begin{aligned}
\int_{\R^3}|\xi||\widehat\omega|^2\,d\xi
&\le\left(\int_{\R^3}(\lambda-m)|\widehat\omega|^2\,d\xi\right)^{1/2}
\left(\int_{\R^3}(\lambda+m)|\widehat\omega|^2\,d\xi\right)^{1/2}\\
&=\bigl[K(\omega)(K(\omega)+2ma)\bigr]^{1/2},
\end{aligned}
\]
which proves \eqref{eq:FW-D-control}. Combining this estimate with \eqref{eq:weighted-GN-half},
we deduce \eqref{eq:omega-weighted-K}.

From Foldy--Wouthuysen representation in the proof of Lemma~\ref{lem:FW-low}, we have
\[
\widehat v(\xi)=\binom{\rho_+(\xi)\widehat\omega(\xi)}{\rho_-(\xi)S(\xi)\widehat\omega(\xi)}.
\]
Since $\lambda\rho_-^2=\frac{\lambda-m}{2}$ and $(1-\rho_+)^2\le\rho_-^2$, it follows that
\[
\left\|v-\binom{\omega}{0}\right\|_E^2\le C\int_{\R^3}(\lambda-m)|\widehat\omega|^2\,d\xi,
\]
which is \eqref{eq:FW-close-upper}.  By Lemma~\ref{lem:weighted}, we have
\[
\left\|v-\binom{\omega}{0}\right\|_{L_b^r}\le C K(\omega)^{\frac12}.
\]
Set $d=v-\binom{\omega}{0}$. Applying the triangle inequality, we obtain
\[
\mathcal A_r(v)^{\frac1r}\le \mathcal A_r(\omega)^{\frac1r}+\norm{d}_{L_b^r}
\le\mathcal A_r(\omega)^{\frac1r}+CK(\omega)^{\frac12},
\]
which proves the first estimate in \eqref{eq:FW-Ar-comparison-sharp}. Moreover, using the pointwise inequality
\[
\bigl||z|^r-|y|^r\bigr|\le C_r\bigl(|z|^{r-1}+|y|^{r-1}\bigr)|z-y|
\]
and H\"{o}lder's inequality with respect to $|x|^{-b}\,dx$, we deduce
\[
\left|\mathcal A_r(v)-\mathcal A_r(\omega)\right|\le C_r\left(\mathcal A_r(v)^{\frac{r-1}{r}}
+\mathcal A_r(\omega)^{\frac{r-1}{r}}\right)\norm{d}_{L_b^r}.
\]
The second estimate in \eqref{eq:FW-Ar-comparison-sharp} now follows from the first one and the bound for $\norm{d}_{L_b^r}$.
\end{proof}

We next consider the minimization problem defining $e_q$.

\begin{lemma}\label{lem:NLS-ground-set} The level $e_q$ defined in \eqref{eq:NLS-ground-set-introduction} satisfies $-\infty<e_q<0$, and the set of minimizers $\mathcal{G}_q$ is nonempty and compact in $H^1(\R^3;\C^2)$. Moreover, every $Q\in\mathcal{G}_q$ is a weak solution of \eqref{eq:NLS-limit-introduction}, with $\Lambda_q$ given by \eqref{eq:Lambda-limit-introduction}.
\end{lemma}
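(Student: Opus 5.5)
We argue by the standard concentration-compactness route for the $L^2$-subcritical weighted NLS, exploiting that the singular weight $|x|^{-b}$ breaks translation invariance and thereby prevents vanishing.

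\textbf{Lower bound.} We use the $H^1$ weighted Gagliardo--Nirenberg inequality
\[
\mathcal A_q(Q)\le C\norm{Q}_2^{q-\gamma_q}\norm{\nabla Q}_2^{\gamma_q}.
\]
It follows from Lemma~\ref{lem:weighted}-type Hölder splitting (Hardy with $|x|^{-2}$ and $L^6$) or directly by scaling. Since $\gamma_q<2$, $\mathscr E_q$ is coercive on $\mathcal S_q$, and $e_q>-\infty$.

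\textbf{Negativity of $e_q$.} Take $Q_\varepsilon(x)=\varepsilon^{3/2}Q_0(\varepsilon x)$. Then
\[
\mathscr E_q(Q_\varepsilon)=\varepsilon^2\frac{\norm{\nabla Q_0}_2^2}{4m}-\varepsilon^{\gamma_q}\frac{\mathcal A_q(Q_0)}q<0
\]
for small $\varepsilon$, because $\gamma_q<2$.

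\textbf{Existence and compactness.} Let $\{Q_n\}\subset\mathcal S_q$ be minimizing, or more generally any sequence with $\mathscr E_q(Q_n)\to e_q$. By coercivity it is bounded in $H^1$, so up to a subsequence $Q_n\rightharpoonup Q$ in $H^1$.

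By the compact embedding of Lemma~\ref{lem:weighted} (with $N=2$, $q<3-b$, and $H^1\subset H^{1/2}$ continuous), $\mathcal A_q(Q_n)\to\mathcal A_q(Q)$. Since $e_q<0$ and the kinetic term is nonnegative, $\mathcal A_q(Q)\ge -qe_q>0$, so $Q\ne0$. Let $s=\norm{Q}_2^2\in(0,1]$.

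The key step is to rule out $s<1$ using strict subadditivity-type scaling. For $\theta>1$, consider the amplitude scaling $\sqrt\theta Q$. We have
\[
\mathscr E_q(\sqrt\theta Q)=\theta\frac{\norm{\nabla Q}_2^2}{4m}-\theta^{q/2}\frac{\mathcal A_q(Q)}q<\theta\,\mathscr E_q(Q)
\]
since $q>2$ and $\mathcal A_q(Q)>0$. Taking $\theta=1/s$, so that $\sqrt\theta Q\in\mathcal S_q$, gives
\[
e_q\le\mathscr E_q(\sqrt\theta Q)<s^{-1}\mathscr E_q(Q).
\]
Weak lower semicontinuity yields $\mathscr E_q(Q)\le e_q$. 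If $s<1$, then $e_q<s^{-1}e_q$; since $e_q<0$ this means $s^{-1}<1$, a contradiction. Hence $s=1$, $Q_n\to Q$ in $L^2$, $\mathscr E_q(Q)=e_q$, and the gradient norms converge. Together these give $Q_n\to Q$ strongly in $H^1$.

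Applying this argument to an arbitrary sequence in $\mathcal G_q$ gives compactness of $\mathcal G_q$; closedness follows from the continuity of $\mathscr E_q$ on $H^1$.

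\textbf{Euler--Lagrange equation and $\Lambda_q$.} By the Lagrange multiplier rule (the constraint is regular since $Q\neq0$), $Q$ satisfies
\[
-\frac1{2m}\Delta Q+\Lambda Q=|x|^{-b}|Q|^{q-2}Q
\]
weakly for some $\Lambda\in\R$. To identify $\Lambda$, set $T=\norm{\nabla Q}_2^2$ and $A=\mathcal A_q(Q)$.

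Testing the equation with $Q$ gives
\[
\frac{T}{2m}+\Lambda=A.
\]
The Pohozaev-type identity comes from stationarity of $\varepsilon\mapsto\mathscr E_q(Q_\varepsilon)$ at $\varepsilon=1$, which is legitimate since $Q_\varepsilon\in\mathcal S_q$ for every $\varepsilon$:
\[
\frac{T}{2m}=\frac{\gamma_q}qA.
\]
Combined with $e_q=\frac{T}{4m}-\frac Aq$, these give
\[
e_q=\frac{\gamma_q-2}{2q}A,\qquad A=-\frac{2q}{2-\gamma_q}e_q,
\]
and therefore
\[
\Lambda=A\Bigl(1-\frac{\gamma_q}q\Bigr)=-\frac{2(q-\gamma_q)}{2-\gamma_q}e_q=\Lambda_q.
\]
Because $e_q<0$ and $q>\gamma_q$, this gives $\Lambda_q>0$.

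The main point needing care is the exclusion of dichotomy, $0<s<1$. It is handled above by the weighted compact embedding together with the scaling inequality, and no translation invariance issues arise because of the weight.
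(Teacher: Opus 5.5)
Your proof is correct and follows the paper's argument essentially step for step. The shared steps are: the weighted Gagliardo--Nirenberg bound for coercivity; the dilation $Q_t$, used both for $e_q<0$ and for the Pohozaev identity; the compact weighted embedding of Lemma~\ref{lem:weighted} together with the amplitude-rescaling inequality (using $q>2$) to exclude $\|Q\|_2^2<1$; and testing the Euler--Lagrange equation against $Q$ to identify $\Lambda_q$. The only variation is how you obtain the weighted Gagliardo--Nirenberg inequality: you use the classical Hardy inequality and Gagliardo--Nirenberg in $L^\ell$ with $\ell=\frac{2(q-b)}{2-b}\in(2,6)$, which does yield exactly the exponents $q-\gamma_q$ and $\gamma_q$, whereas the paper combines \eqref{eq:weighted-GN-half} with $\|Q\|_{\dot H^{1/2}}^2\le\|Q\|_2\|\nabla Q\|_2$; note that ``directly by scaling'' only fixes the exponents and is not by itself a proof.
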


\begin{proof} For $\mathscr E_q$ defined in \eqref{eq:NLS-energy-introduction}, combining \eqref{eq:weighted-GN-half} with
\[
\norm{Q}_{\dot H^{\frac12}}^2\le\norm{Q}_2\norm{\nabla Q}_2,
\]
we have the weighted Gagliardo--Nirenberg estimate
\begin{equation}\label{eq:weighted-GN-one}
\mathcal A_q(Q)\le C\norm{Q}_2^{q-\gamma_q}\norm{\nabla Q}_2^{\gamma_q}.
\end{equation}
Since $0<\gamma_q<2$, it follows from Young's inequality
that $\mathscr E_q$ is bounded below on $\mathcal S_q$. If $Q\in\mathcal S_q$ and $Q_t(x)=t^{\frac32}Q(tx)$, then
\[
\mathscr E_q(Q_t)=\frac{t^2}{4m}\norm{\nabla Q}_2^2-\frac{t^{\gamma_q}}q\mathcal A_q(Q).
\]
For sufficiently small $t>0$, we have $\mathscr E_q(Q_t)<0$, and hence $e_q<0$.

Let $\{Q_n\}\subset\mathcal S_q$ be a minimizing sequence. By \eqref{eq:weighted-GN-one}, $\{Q_n\}$ is bounded in $H^1(\R^3;\C^2)$. After passing to a subsequence, we may assume that
\[
Q_n\rightharpoonup Q\quad\text{in }H^1(\R^3;\C^2),
\quad Q_n\to Q\quad\text{in }L^q(\R^3,|x|^{-b}\,dx;\C^2),
\]
where the weighted convergence follows from Lemma~\ref{lem:weighted}. Using weak lower semicontinuity and $\mathcal A_q(Q_n)\to\mathcal A_q(Q)$, we obtain
\[
\mathscr E_q(Q)\le\liminf_{n\to\infty}\mathscr E_q(Q_n)=e_q<0.
\]
Thus $Q\ne0$. By weak lower semicontinuity of the $L^2$ norm, we have $\theta=\norm{Q}_2^2\in(0,1]$.
If $\theta<1$, set $Q^\sharp=\frac{Q}{\sqrt\theta}$.
Since $q>2$, we deduce that
\[
\mathscr E_q(Q)=\theta\mathscr E_q(Q^\sharp)+\frac{\theta-\theta^{\frac q2}}q\mathcal A_q(Q^\sharp)
\ge\theta e_q+\frac{\theta-\theta^{\frac q2}}q\mathcal A_q(Q^\sharp)>\theta e_q>e_q,
\]
which contradicts $\mathscr E_q(Q)\le e_q$. Hence $\theta=1$. By the definition of $e_q$, we now have $e_q\le\mathscr E_q(Q)$, and therefore $Q\in\mathcal G_q$.

Since $Q_n\rightharpoonup Q$ in $L^2(\R^3;\C^2)$ and $\norm{Q_n}_2=\norm{Q}_2=1$, we obtain
\[
Q_n\to Q\quad\text{strongly in }L^2(\R^3;\C^2).
\]
Moreover, using
$\mathscr E_q(Q_n)\to e_q=\mathscr E_q(Q)$ and
$\mathcal A_q(Q_n)\to\mathcal A_q(Q)$, we have
\[
\begin{aligned}
\frac{1}{4m}\norm{\nabla Q_n}_2^2
&=
\mathscr E_q(Q_n)+\frac1q\mathcal A_q(Q_n)\\
&\rightarrow
\mathscr E_q(Q)+\frac1q\mathcal A_q(Q)
=
\frac{1}{4m}\norm{\nabla Q}_2^2.
\end{aligned}
\]
Hence
\[
Q_n\to Q
\quad\text{strongly in }H^1(\R^3;\C^2).
\]
Since every sequence in $\mathcal G_q$ is a minimizing
sequence for $e_q$, we conclude that
$\mathcal G_q$ is compact in $H^1(\R^3;\C^2)$.

Let $Q\in\mathcal G_q$. By the Lagrange multiplier rule, there exists $\Lambda\in\R$ such that
\[
-\frac1{2m}\Delta Q+\Lambda Q=|x|^{-b}|Q|^{q-2}Q.
\]
As $Q_t\in\mathcal S_q$ for $t>0$ and $Q$ is a minimizer, the function $t\mapsto\mathscr E_q(Q_t)$ has a minimum at $t=1$. Differentiating at $t=1$, we obtain
\begin{equation}\label{eq:NLS-Pohozaev}
\frac1{2m}\norm{\nabla Q}_2^2=\frac{\gamma_q}{q}\mathcal A_q(Q).
\end{equation}
Testing the equation against $Q$ and using $\norm{Q}_2=1$, we also have
\[
\Lambda=\mathcal A_q(Q)-\frac1{2m}\norm{\nabla Q}_2^2.
\]
Combining these two identities with $\mathscr E_q(Q)=e_q$, we deduce that
\[
e_q=-\frac{2-\gamma_q}{2q}\mathcal A_q(Q),\quad
\Lambda=\mathcal A_q(Q)-\frac1{2m}\norm{\nabla Q}_2^2=-\frac{2(q-\gamma_q)}{2-\gamma_q}e_q.
\]
Thus $\Lambda=\Lambda_q$ is independent of the choice of $Q\in\mathcal G_q$ and is positive since $e_q<0$ and $\gamma_q<2<q$.
\end{proof}

\subsection{Estimates for the reduced energy}
Let $r_*$ be the exponent in \eqref{eq:r-star}, satisfying
\eqref{eq:r-star-range}. We estimate the error in replacing
$F_\delta(\sigma)$ with $\sigma^q/q$ in the energy functional.

\begin{lemma}\label{lem:truncated-remainder}
There exists a constant $C_{\delta,r_*}>0$ such that
\begin{equation}\label{eq:truncated-remainder}
\left|F_\delta(\sigma)-\frac{\sigma^q}{q}\right|\le C_{\delta,r_*}\sigma^{r_*},\quad \sigma\ge0.
\end{equation}
Consequently, for every $u\in E$,
\begin{equation}\label{eq:truncated-remainder-integral}
\left|\int_{\R^3}|x|^{-b}F_\delta(|u|)\,dx-\frac1q\mathcal A_q(u)\right|\le C_{\delta,r_*}\mathcal A_{r_*}(u).
\end{equation}
\end{lemma}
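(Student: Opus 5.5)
We split at $\sigma=\delta$ and use the explicit formula \eqref{eq:trunc-definition} on each side.

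\emph{Small amplitudes, $0\le\sigma\le\delta$.} By \eqref{eq:trunc-equal}, $F_\delta(\sigma)-\frac{\sigma^q}{q}=-\frac{\sigma^p}{p}$. Since $r_*<p$ by \eqref{eq:r-star-range} and $\sigma\le\delta<1$, we have $\sigma^p=\sigma^{p-r_*}\sigma^{r_*}\le\delta^{p-r_*}\sigma^{r_*}$. Hence
\[
\left|F_\delta(\sigma)-\frac{\sigma^q}{q}\right|\le\frac{\delta^{p-r_*}}{p}\sigma^{r_*}.
\]

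\emph{Large amplitudes, $\sigma\ge\delta$.} Here $F_\delta(\sigma)=A_\delta\sigma^q+B_\delta\sigma^2+\Gamma_\delta$. The triangle inequality gives
\[
\left|F_\delta(\sigma)-\frac{\sigma^q}{q}\right|\le\left|A_\delta-\frac1q\right|\sigma^q+B_\delta\sigma^2+|\Gamma_\delta|.
\]
Each term is a power $\sigma^s$ with $s\in\{q,2,0\}$, and $s\le q<r_*$. For $\sigma\ge\delta$ we have $\sigma^s=\sigma^{r_*}\sigma^{s-r_*}\le\delta^{s-r_*}\sigma^{r_*}$, because $s-r_*<0$. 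So on this range the difference is bounded by a constant depending only on $\delta,p,q,r_*$, times $\sigma^{r_*}$. This is where we use $r_*>q$.

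Taking $C_{\delta,r_*}$ as the larger of the two constants gives \eqref{eq:truncated-remainder}.

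\emph{Integrated estimate.} For $u\in E$, put $\sigma=|u(x)|$, multiply by $|x|^{-b}$ and integrate. This yields \eqref{eq:truncated-remainder-integral}. All quantities involved are finite by Lemma~\ref{lem:weighted}, since $q<r_*<3-b$.

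No step is a real obstacle. The only point needing care is that $r_*$ lies strictly between $q$ and $p$: the upper bound $r_*<p$ controls small $\sigma$, and the lower bound $r_*>q$ controls large $\sigma$.
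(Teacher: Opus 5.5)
Your proof is correct and matches the paper's argument: split at $\sigma=\delta$, use $r_*<p$ to bound $\sigma^p/p$ by $\frac{\delta^{p-r_*}}{p}\sigma^{r_*}$ on $[0,\delta]$, use $r_*>q>2$ to absorb $\sigma^q$, $\sigma^2$ and $1$ into a multiple of $\sigma^{r_*}$ on $[\delta,\infty)$, and then integrate against $|x|^{-b}\,dx$, with finiteness from Lemma~\ref{lem:weighted}. The assumption $\delta<1$ that you invoke in the small-amplitude step is not needed, since $\sigma\le\delta$ and $p>r_*$ already give the bound, but it does no harm.
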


\begin{proof}
If $0\le\sigma\le\delta$, then
\[
\left|F_\delta(\sigma)-\frac{\sigma^q}{q}\right|=\frac{\sigma^p}{p}\le\frac{\delta^{p-r_*}}p\sigma^{r_*}.
\]
For $\sigma\ge\delta$, it follows from
\eqref{eq:trunc-definition} that
\[
F_\delta(\sigma)-\frac{\sigma^q}{q}=\left(A_\delta-\frac1q\right)\sigma^q+B_\delta\sigma^2+\Gamma_\delta.
\]
Since $r_*>q>2$ and $\sigma\ge\delta$, we have
$\sigma^q+\sigma^2+1\le C_{\delta,r_*}\sigma^{r_*}$, which proves \eqref{eq:truncated-remainder}. Taking $\sigma=|u(x)|$ in \eqref{eq:truncated-remainder} and integrating with respect to $|x|^{-b}\,dx$, we obtain \eqref{eq:truncated-remainder-integral}. Its right-hand side is finite by Lemma~\ref{lem:weighted} and $r_*<3-b$.
\end{proof}

We next estimate the energy increase resulting from maximization over the negative spectral fiber.

\begin{lemma}\label{lem:fiber-energy-error}
There exists a constant $C>0$ such that, for every $v\in S_a^+\cap\mathcal C$ with $0<a<a_0$,
\begin{equation}\label{eq:fiber-energy-error}
0\le J_\delta(v)-I_\delta(v)\le C\mathcal A_q(v)^{\frac{2(q-1)}q}.
\end{equation}
\end{lemma}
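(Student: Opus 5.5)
The lower bound is immediate. Since $0\in\mathcal B(v)$ and $h(v,0)=v$, we have $J_\delta(v)=\Phi_v(w_\delta(v))\ge\Phi_v(0)=I_\delta(v)$. For the upper bound, we will use the uniform strict concavity \eqref{eq:concave} of $\Phi_v$ on the convex set $\mathcal B(v)$, together with the gradient bound at $w=0$ obtained in Lemma~\ref{lem:w-estimate}.

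Write $w=w_\delta(v)$, which is an interior point of $\mathcal B(v)$, and consider $g(s)=\Phi_v(sw)$ for $s\in[0,1]$. The segment $\{sw\}$ lies in $\mathcal B(v)$ by convexity. By \eqref{eq:concave}, $g''(s)\le-\frac12\norm{w}_E^2\le0$, so $g$ is concave, and therefore
\[
g(1)-g(0)\le g'(0)=D_w\Phi_v(0)[w].
\]
The proof of Lemma~\ref{lem:w-estimate} gives $\norm{D_w\Phi_v(0)}_{(E^-)^*}\le C\mathcal A_q(v)^{\frac{q-1}{q}}$, and \eqref{eq:w-estimate} gives $\norm{w}_E\le C\mathcal A_q(v)^{\frac{q-1}{q}}$. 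Combining these,
\[
J_\delta(v)-I_\delta(v)=g(1)-g(0)\le C\mathcal A_q(v)^{\frac{2(q-1)}{q}}.
\]
One could sharpen this slightly using $g'(1)=0$, which yields $g(1)-g(0)\le\frac12 g'(0)$, but the constant does not matter here.

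The only point that needs checking is uniformity: $C$ must be independent of $a$ and $v$. This follows because both \eqref{eq:concave} and the bound on $D_w\Phi_v(0)$ were derived with constants uniform over $v\in\mathcal C$ with $0<\norm{v}_2^2<a_0$. For the latter, note that $D_w\Phi_v(0)[z]=-\Re\int|x|^{-b}f_\delta(|v|)v\cdot\overline z$, since $D_wh(v,0)[z]=z$ and $\ip{v}{z}_E=0$ for $z\in E^-$. Its bound then follows from \eqref{eq:trunc-growth}, H\"older's inequality and Lemma~\ref{lem:weighted}, none of which depends on $a$. I do not expect a real obstacle here.
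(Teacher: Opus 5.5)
Your proof is correct and is essentially the paper's. Both arguments combine the concavity inequality $\Phi_v(w)\le\Phi_v(0)+D_w\Phi_v(0)[w]$ along the segment $\{sw\}\subset\mathcal B(v)$ with the bound $\norm{D_w\Phi_v(0)}_{(E^-)^*}\le C\mathcal A_q(v)^{\frac{q-1}{q}}$. The paper keeps the extra term $-\frac14\norm{w}_E^2$ and maximizes $\eta_v s-\frac14 s^2$ over $s\ge0$, so it does not need \eqref{eq:w-estimate}; you drop that term and invoke \eqref{eq:w-estimate} instead, which is equally valid.

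Your side remark that $g'(1)=0$ yields $g(1)-g(0)\le\frac12 g'(0)$ does not hold for a general concave $g$. It holds when $g''$ is constant, but here the nonlinear term $\mathcal N_\delta(h(v,sw))$ makes $g''$ vary in $s$. Nothing in your argument depends on it, so the proof stands.
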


\begin{proof}
With the notation in the proof of Lemma~\ref{lem:fiber}, the point $w_\delta(v)$ maximizes $\Phi_v$, so the left-hand inequality is immediate. By \eqref{eq:concave},
\[
\Phi_v(w)\le\Phi_v(0)+D_w\Phi_v(0)[w]-\frac14\norm{w}_E^2,\quad w\in\mathcal B(v).
\]
As in the proof of Lemma~\ref{lem:w-estimate}, we have
\[
\norm{D_w\Phi_v(0)}_{(E^-)^*}\le C\mathcal A_q(v)^{\frac{q-1}{q}}.
\]
Set $\eta_v=C\mathcal A_q(v)^{\frac{q-1}{q}}$. Combining the above two estimates, we obtain
\[
0\le J_\delta(v)-I_\delta(v)\le\sup_{s\ge0}\left(\eta_vs-\frac14s^2\right)=\eta_v^2.
\]
This is \eqref{eq:fiber-energy-error}.
\end{proof}

By the definition of $\varepsilon_a$, we have
\begin{equation}\label{eq:epsilon-balance}
a^{\frac{q-2}{2}}\varepsilon_a^{\gamma_q}=\varepsilon_a^2.
\end{equation}

\begin{lemma}\label{lem:sharp-energy}
Let $v_a$, $w_a$, $t_a$, and $\omega_a$ be given by \eqref{eq:limit-decomposition}--\eqref{eq:limit-FW-coordinate}, and set
\begin{equation*}\label{eq:Ka-definition}
K_a=\norm{v_a}_E^2-ma=\int_{\R^3}(\lambda(\xi)-m)|\widehat\omega_a(\xi)|^2\,d\xi.
\end{equation*}
Then
\begin{equation}\label{eq:sharp-bounds}
K_a=O(a\varepsilon_a^2),\quad\mathcal A_q(v_a)+\mathcal A_q(\omega_a)=O(a\varepsilon_a^2),
\quad\mathcal A_{r_*}(v_a)=o(a\varepsilon_a^2),
\end{equation}
and
\begin{equation}\label{eq:sharp-w-bounds}
\norm{w_a}_E^2=o(a\varepsilon_a^2),\quad t_a=1+o(\varepsilon_a^2).
\end{equation}
Moreover,
\begin{equation}\label{eq:sharp-energy-reduction}
e_a^\delta=\frac{ma}{2}+\frac{K_a}{2}-\frac1q\mathcal A_q(\omega_a)+o(a\varepsilon_a^2).
\end{equation}
\end{lemma}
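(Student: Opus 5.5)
The plan is to compare the upper bound for $e_a^\delta$ obtained from a good test function with a lower bound that expresses $J_\delta(v_a)$ through $K_a$ and $\mathcal A_q(\omega_a)$. Together these first give the a priori bounds \eqref{eq:sharp-bounds}, and then the expansion \eqref{eq:sharp-energy-reduction}.

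\emph{Step 1: upper bound.} Take $Q\in\mathcal G_q$ (nonempty by Lemma~\ref{lem:NLS-ground-set}), mollified if needed, and form the test function $v=U_{\rm FW}^{-1}\binom{\varphi}{0}$ with $\varphi(x)=\sqrt a\,\varepsilon_a^{3/2}Q(\varepsilon_a x)$. Since $\lambda-m\le|\xi|^2/(2m)$, we get $K(\varphi)\le a\varepsilon_a^2\|\nabla Q\|_2^2/(2m)$. By \eqref{eq:epsilon-balance}, $\mathcal A_q(\varphi)=a\varepsilon_a^2\mathcal A_q(Q)$. Then:
\begin{itemize}
\item By \eqref{eq:FW-Ar-comparison-sharp}, $\mathcal A_q(v)=\mathcal A_q(\varphi)+O(a^{q/2}\varepsilon_a^{\gamma_q(q-1)/q+1})=\mathcal A_q(\varphi)+o(a\varepsilon_a^2)$, after checking exponents with \eqref{eq:epsilon-balance}.
\item By scaling, $\mathcal A_{r_*}(v)\le C a^{r_*/2}\varepsilon_a^{\gamma_{r_*}}+CK^{r_*/2}=o(a\varepsilon_a^2)$. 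For this one checks that $a^{(r_*-q)/2}\varepsilon_a^{\gamma_{r_*}-\gamma_q}\to0$; it does, since $\gamma_{r_*}-\gamma_q=\tfrac32(r_*-q)$ and $a$ is a positive power of $\varepsilon_a$.
\item By Lemma~\ref{lem:fiber-energy-error}, $J_\delta(v)-I_\delta(v)\le C(a\varepsilon_a^2)^{2(q-1)/q}=o(a\varepsilon_a^2)$.
\end{itemize}
Combining these with Lemma~\ref{lem:truncated-remainder} gives
\[
e_a^\delta\le\tfrac{ma}{2}+a\varepsilon_a^2\mathscr E_q(Q)+o(a\varepsilon_a^2)=\tfrac{ma}{2}+a\varepsilon_a^2e_q+o(a\varepsilon_a^2).
\]
In particular, $e_a^\delta\le\tfrac{ma}{2}+O(a\varepsilon_a^2)$.

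\emph{Step 2: a priori bounds.} Use $J_\delta(v_a)\ge I_\delta(v_a)=\tfrac{ma}{2}+\tfrac{K_a}{2}-\mathcal N_\delta(v_a)$ together with $\mathcal N_\delta\le C\mathcal A_q$. The first estimate in \eqref{eq:FW-Ar-comparison-sharp} and \eqref{eq:omega-weighted-K} give
\[
\mathcal A_q(v_a)\le C a^{q/2-\gamma_q}[K_a(K_a+2ma)]^{\gamma_q/2}+CK_a^{q/2}.
\]
Since $K_a\le\Theta_a$, this becomes $\mathcal A_q(v_a)\le C a^{q/2-\gamma_q/2}K_a^{\gamma_q/2}+\dots$. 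Comparing with Step~1 yields
\[
\tfrac{K_a}{2}\le C a^{(q-\gamma_q)/2}K_a^{\gamma_q/2}+o\text{-terms}+O(a\varepsilon_a^2).
\]
Because $\gamma_q<2$, Young's inequality gives $K_a\le C\big(a^{(q-\gamma_q)/(2-\gamma_q)}+a\varepsilon_a^2\big)$, and \eqref{eq:epsilon-balance} shows that $a^{(q-\gamma_q)/(2-\gamma_q)}=a\varepsilon_a^2$. The $K_a^{q/2}$ term is absorbed because $K_a\le Ca$ and $q>2$. Hence $K_a=O(a\varepsilon_a^2)$, and substituting back gives $\mathcal A_q(v_a),\mathcal A_q(\omega_a)=O(a\varepsilon_a^2)$. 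The same argument with exponent $r_*$ gives
\[
\mathcal A_{r_*}(v_a)\le C a^{r_*/2-\gamma_{r_*}/2}(a\varepsilon_a^2)^{\gamma_{r_*}/2}+CK_a^{r_*/2},
\]
which is $o(a\varepsilon_a^2)$ by the same exponent check as in Step~1.

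\emph{Step 3: the $w_a$ bounds and the expansion.} Lemma~\ref{lem:w-estimate} gives $\|w_a\|_E^2\le C\mathcal A_q(v_a)^{2(q-1)/q}=O((a\varepsilon_a^2)^{2(q-1)/q})=o(a\varepsilon_a^2)$, because $2(q-1)/q>1$ and $a\varepsilon_a^2\to0$. Then \eqref{eq:normequiv} gives $1-t_a^2=\|w_a\|_2^2/a\le C\|w_a\|_E^2/a=o(\varepsilon_a^2)$, so $t_a=1+o(\varepsilon_a^2)$. For \eqref{eq:sharp-energy-reduction}, the two-sided estimate
\[
I_\delta(v_a)\le e_a^\delta\le I_\delta(v_a)+o(a\varepsilon_a^2)
\]
follows from Lemma~\ref{lem:fiber-energy-error}. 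Then Lemma~\ref{lem:truncated-remainder} gives $I_\delta(v_a)=\tfrac{ma}{2}+\tfrac{K_a}{2}-\tfrac1q\mathcal A_q(v_a)+o(a\varepsilon_a^2)$. Finally, \eqref{eq:FW-Ar-comparison-sharp} gives
\[
|\mathcal A_q(v_a)-\mathcal A_q(\omega_a)|\le C\big((a\varepsilon_a^2)^{(q-1)/q}+(a\varepsilon_a^2)^{(q-1)/2}\big)(a\varepsilon_a^2)^{1/2}=o(a\varepsilon_a^2),
\]
since $(q-1)/q+1/2>1$.

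The main obstacle is Step~2, specifically the exponent bookkeeping. The factor $K_a+2ma$ in \eqref{eq:omega-weighted-K} must be handled so that it contributes only the factor $a^{\gamma_q/2}$, and Young's inequality must land exactly on the scale $a\varepsilon_a^2$. The other point to check carefully is that every error term is genuinely $o(a\varepsilon_a^2)$, which uses that $a$ is a positive power of $\varepsilon_a$.
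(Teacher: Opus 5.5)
Your proposal is correct and follows essentially the same route as the paper. It combines an upper bound from a rescaled Foldy--Wouthuysen test function, the lower bound $J_\delta(v_a)\ge I_\delta(v_a)$ together with \eqref{eq:omega-weighted-K} and \eqref{eq:FW-Ar-comparison-sharp} to get $K_a=O(a\varepsilon_a^2)$, and then Lemmas~\ref{lem:w-estimate}, \ref{lem:truncated-remainder}, and \ref{lem:fiber-energy-error} for the remaining bounds and the expansion. The differences are minor: you test with a ground state $Q\in\mathcal G_q$ directly rather than with $C_c^\infty$ functions plus density, and you absorb via Young's inequality instead of first showing $K_a/a\to0$. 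Note also that $e_a^\delta<\frac{ma}{2}$ from Lemma~\ref{lem:energy-level} already suffices in Step~2, and you should check that your test function lies in $X_a$.
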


\begin{proof}
To estimate $e_a^\delta$ from above, fix $\phi\in C_c^\infty(\R^3;\C^2)$ with $\norm{\phi}_2=1$ and define
\[
\omega_a^\phi(x)=\sqrt a\,\varepsilon_a^{\frac32}\phi(\varepsilon_a x),\quad
v_a^\phi=U_{\rm FW}^{-1}\binom{\omega_a^\phi}{0}.
\]
Then $v_a^\phi\in X_a$ for all sufficiently small $a$. Applying the dominated convergence theorem, we obtain
\begin{equation}\label{eq:test-sharp-kinetic}
\frac{\norm{v_a^\phi}_E^2-ma}{a\varepsilon_a^2}=\int_{\R^3}\frac{\sqrt{m^2+\varepsilon_a^2|\xi|^2}-m}{\varepsilon_a^2}
|\widehat\phi(\xi)|^2\,d\xi\rightarrow\frac1{2m}\norm{\nabla\phi}_2^2.
\end{equation}
Indeed,
\[
0\le\frac{\sqrt{m^2+\varepsilon_a^2|\xi|^2}-m}{\varepsilon_a^2}=\frac{|\xi|^2}{\sqrt{m^2+\varepsilon_a^2|\xi|^2}+m}
\le\frac{|\xi|^2}{2m}.
\]
By Fourier scaling and the definition of $U_{\rm FW}$, we have
\begin{equation}\label{eq:test-rescaled-FW}
\widehat{\mathscr D_a v_a^\phi}(\xi)=U(\varepsilon_a\xi)^{-1}\binom{\widehat\phi(\xi)}{0}.
\end{equation}
For every $\xi\in\R^3$, $U(\varepsilon_a\xi)^{-1}\to I_4$
as $a\to0^+$. Since $U(\varepsilon_a\xi)^{-1}$ is unitary,
\[
\left(1+|\xi|^2\right)^{\frac12}\left|\left[U(\varepsilon_a\xi)^{-1}-I_4\right]
\binom{\widehat\phi(\xi)}{0}\right|^2\le4\left(1+|\xi|^2\right)^{\frac12}|\widehat\phi(\xi)|^2.
\]
The right-hand side is integrable because
$\phi\in C_c^\infty(\R^3;\C^2)$.
It follows from \eqref{eq:test-rescaled-FW} and the dominated convergence theorem that
\[
\mathscr D_a v_a^\phi\rightarrow\binom{\phi}{0}\quad\text{in }H^{\frac12}(\R^3;\C^4).
\]
Applying Lemma~\ref{lem:weighted}, we also have
\[
\mathscr D_a v_a^\phi\rightarrow\binom{\phi}{0}\quad\text{in }L^{r}(\R^3,|x|^{-b}\,dx;\C^4),\quad r\in\{q,r_*\}.
\]
Using the scaling identity
\[
\mathcal A_r(v_a^\phi)=a^{\frac{r}{2}}\varepsilon_a^{\gamma_r}\mathcal A_r(\mathscr D_a
v_a^\phi),
\]
we deduce that
\begin{equation}\label{eq:test-sharp-nonlinear}
\mathcal A_r(v_a^\phi)=a^{\frac{r}{2}}\varepsilon_a^{\gamma_r}\bigl(\mathcal A_r(\phi)+o(1)\bigr).
\end{equation}
By \eqref{eq:epsilon-balance} and \eqref{eq:test-sharp-nonlinear},
\[
\mathcal A_q(v_a^\phi)=a\varepsilon_a^2\bigl(\mathcal A_q(\phi)+o(1)\bigr),\quad
\mathcal A_{r_*}(v_a^\phi)=o(a\varepsilon_a^2).
\]
The latter estimate follows from
\[
\frac{a^{\frac{r_*}{2}}\varepsilon_a^{\gamma_{r_*}}}{a\varepsilon_a^2}
=a^{\frac{r_*-q}{2}}\varepsilon_a^{\gamma_{r_*}-\gamma_q}
=a^{\frac{r_*-q}{2}}\varepsilon_a^{\frac{3(r_*-q)}2}\rightarrow0.
\]
Moreover, combining \eqref{eq:fiber-energy-error} with
$\mathcal A_q(v_a^\phi)=O(a\varepsilon_a^2)$, we obtain
\[
J_\delta(v_a^\phi)-I_\delta(v_a^\phi)=O\left((a\varepsilon_a^2)^{\frac{2(q-1)}q}\right)=o(a\varepsilon_a^2).
\]
Therefore, by \eqref{eq:test-sharp-kinetic} and Lemma~\ref{lem:truncated-remainder},
\[
J_\delta(v_a^\phi)-\frac{ma}{2}=a\varepsilon_a^2\mathscr E_q(\phi)+o(a\varepsilon_a^2).
\]
Hence
\[
\limsup_{a\to0}\frac{e_a^\delta-\frac{ma}{2}}{a\varepsilon_a^2}
\le\lim_{a\to0}\frac{J_\delta(v_a^\phi)-\frac{ma}{2}}{a\varepsilon_a^2}=\mathscr E_q(\phi).
\]
Since normalized functions in $C_c^\infty(\R^3;\C^2)$
are dense in $\mathcal S_q$ with respect to the $H^1$ norm,
we take the infimum over such $\phi$ to obtain
\begin{equation}\label{eq:energy-limsup}
\limsup_{a\to0}\frac{e_a^\delta-\frac{ma}{2}}{a\varepsilon_a^2}\le e_q<0.
\end{equation}

We now estimate $K_a$.  Using $J_\delta(v_a)\ge I_\delta(v_a)$ and $F_\delta(\sigma)\le C\sigma^q$, we have
\begin{equation}\label{eq:K-lower-energy}
e_a^\delta-\frac{ma}{2}\ge\frac{K_a}{2}-C\mathcal A_q(v_a).
\end{equation}
By Lemma~\ref{lem:limit-interpolation},
\begin{equation}\label{eq:Aq-K-bound}
\mathcal A_q(v_a)\le C a^{\frac q2-\gamma_q}\bigl[K_a(K_a+2ma)\bigr]^{\frac{\gamma_q}{2}}+C
K_a^{\frac{q}{2}}.
\end{equation}
Put $k_a=\frac{K_a}{a}$.
From \eqref{eq:K-lower-energy} and Lemma~\ref{lem:energy-level},
we have $K_a\le C\mathcal A_q(v_a)$ for all sufficiently small $a$.
Combining this bound with \eqref{eq:Aq-K-bound}, we obtain
\[
k_a\le C a^{\frac{q-2}{2}}\left([k_a(k_a+2m)]^{\frac{\gamma_q}{2}}+k_a^{\frac{q}{2}}\right).
\]
Moreover, $v_a\in X_a$, so
\[
0\le k_a\le\frac{3}{2}(m+1)-m=\frac{m+3}{2}.
\]
It follows from these two inequalities and $q>2$ that $k_a\to0^+$.
In particular, $k_a\le1$ for all sufficiently small $a$.
Using $q>\gamma_q$, we have
\[
[k_a(k_a+2m)]^{\frac{\gamma_q}{2}}+k_a^{\frac q2}\le C k_a^{\frac{\gamma_q}{2}}.
\]
Thus $k_a^{1-\frac{\gamma_q}{2}}\le Ca^{\frac{q-2}{2}}$.
Since $\gamma_q<2$, it follows that
\[
k_a\le Ca^{\frac{q-2}{2-\gamma_q}}=C\varepsilon_a^2,
\]
where the last equality follows from \eqref{eq:epsilon-balance}. This proves the bound for $K_a$ in \eqref{eq:sharp-bounds}.
Substituting this estimate into \eqref{eq:Aq-K-bound} and using \eqref{eq:epsilon-balance}, we obtain
\[
\mathcal A_q(v_a)\le C a^{\frac q2}\varepsilon_a^{\gamma_q}+C(a\varepsilon_a^2)^{\frac q2}\le C a\varepsilon_a^2.
\]
Similarly, by \eqref{eq:omega-weighted-K} with $r=q$,
$\mathcal A_q(\omega_a)\le C a\varepsilon_a^2$, and hence
$\mathcal A_q(v_a)+\mathcal A_q(\omega_a)=O(a\varepsilon_a^2)$.
Substituting these estimates and $K_a=O(a\varepsilon_a^2)$
into \eqref{eq:FW-Ar-comparison-sharp}, we deduce
\[
\left|\mathcal A_q(v_a)-\mathcal A_q(\omega_a)\right|=o(a\varepsilon_a^2).
\]
Indeed, the two powers of $a\varepsilon_a^2$ arising on the right-hand side are $\frac{q-1}{q}+\frac12>1$ and $\frac{q}{2}>1$.
Taking $r=r_*$ in \eqref{eq:omega-weighted-K} and
\eqref{eq:FW-Ar-comparison-sharp}, and using
$K_a=O(a\varepsilon_a^2)$, we have
\[
\mathcal A_{r_*}(v_a)\le Ca^{\frac{r_*}{2}}\varepsilon_a^{\gamma_{r_*}}+C(a\varepsilon_a^2)^{\frac{r_*}{2}}
=o(a\varepsilon_a^2),
\]
where the first term is $o(a\varepsilon_a^2)$ by
\eqref{eq:epsilon-balance} and $r_*>q$, and the second is
$o(a\varepsilon_a^2)$ since $r_*>2$.

By Lemma~\ref{lem:w-estimate}, we have
\[
\|w_a\|_E^2\le C\mathcal A_q(v_a)^{\frac{2(q-1)}q}=O\left((a\varepsilon_a^2)^{\frac{2(q-1)}q}\right)=o(a\varepsilon_a^2).
\]
It follows from the definition of $t_a$ and \eqref{eq:normequiv} that
\[
0\le1-t_a\le C\frac{\|w_a\|_2^2}{a}\le C\frac{\|w_a\|_E^2}{a}=o(\varepsilon_a^2),
\]
which proves \eqref{eq:sharp-w-bounds}. Combining \eqref{eq:fiber-energy-error},
\eqref{eq:truncated-remainder-integral}, \eqref{eq:sharp-bounds}, and
$\mathcal A_q(v_a)-\mathcal A_q(\omega_a)=o(a\varepsilon_a^2)$,
we obtain
\[
\begin{aligned}
e_a^\delta
&=I_\delta(v_a)+o(a\varepsilon_a^2)\\
&=\frac{ma}{2}+\frac{K_a}{2}
-\frac1q\mathcal A_q(v_a)+o(a\varepsilon_a^2)\\
&=\frac{ma}{2}+\frac{K_a}{2}
-\frac1q\mathcal A_q(\omega_a)+o(a\varepsilon_a^2),
\end{aligned}
\]
which is \eqref{eq:sharp-energy-reduction}.
\end{proof}

\subsection{Compactness and completion of the proof}
We next establish compactness for the rescaled family
\begin{equation}\label{eq:varpi-rescaling}
\varpi_a(x)
=a^{-\frac12}\varepsilon_a^{-\frac32}
\omega_a\left(\frac{x}{\varepsilon_a}\right),
\quad \norm{\varpi_a}_2^2=1.
\end{equation}
To express the kinetic energy after rescaling, we introduce,
for $\varepsilon>0$,
\begin{equation}\label{eq:pseudo-kinetic}
\mathfrak t_\varepsilon(\xi)
=\frac{\sqrt{m^2+\varepsilon^2|\xi|^2}-m}{\varepsilon^2},
\quad
\mathcal T_\varepsilon(\varpi)
=\int_{\R^3}
\mathfrak t_\varepsilon(\xi)
|\widehat\varpi(\xi)|^2\,d\xi.
\end{equation}
The Fourier transform of $\varpi_a$ is
\[
\widehat\varpi_a(\xi)=a^{-\frac12}\varepsilon_a^{\frac32}\widehat\omega_a(\varepsilon_a\xi).
\]
Then
\[
K_a=a\int_{\R^3}\bigl(\sqrt{m^2+\varepsilon_a^2|\xi|^2}-m\bigr)|\widehat\varpi_a(\xi)|^2\,d\xi
=a\varepsilon_a^2\mathcal T_{\varepsilon_a}(\varpi_a),
\]
\[
\mathcal A_q(\omega_a)=a^{\frac q2}\varepsilon_a^{\gamma_q}\mathcal A_q(\varpi_a)=a\varepsilon_a^2\mathcal A_q(\varpi_a).
\]
The last equality follows from \eqref{eq:epsilon-balance}. Substituting these identities into
\eqref{eq:sharp-energy-reduction}, we have
\begin{equation}\label{eq:rescaled-energy-identity}
\frac{e_a^\delta-\frac{ma}{2}}{a\varepsilon_a^2}=\frac12\mathcal T_{\varepsilon_a}(\varpi_a)
-\frac1q\mathcal A_q(\varpi_a)+o(1).
\end{equation}

\begin{lemma}\label{lem:rescaled-compactness}
As $a\to0^+$, we have
\begin{equation}\label{eq:rescaled-energy-limit}
\frac{e_a^\delta-\frac{ma}{2}}{a\varepsilon_a^2}\rightarrow e_q,\quad
\operatorname{dist}_{H^{\frac12}(\R^3;\C^2)}(\varpi_a,\mathcal G_q)\rightarrow0.
\end{equation}
Moreover, if $a_n\to0^+$ and $\varpi_{a_n}\to Q$ strongly in $H^{\frac12}(\R^3;\C^2)$ for some $Q\in\mathcal G_q$, then
\begin{equation}\label{eq:pseudo-kinetic-limit}
\mathcal T_{\varepsilon_{a_n}}(\varpi_{a_n})\rightarrow\frac1{2m}\norm{\nabla Q}_2^2.
\end{equation}
\end{lemma}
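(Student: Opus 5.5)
The plan is to combine the upper bound \eqref{eq:energy-limsup} with a matching lower bound obtained from the rescaled identity \eqref{eq:rescaled-energy-identity}, and to extract compactness from a Fourier splitting of $\varpi_a$ into low and high frequencies. By Lemma~\ref{lem:sharp-energy}, $\mathcal T_{\varepsilon_a}(\varpi_a)=K_a/(a\varepsilon_a^2)$ and $\mathcal A_q(\varpi_a)$ are bounded. Since $\mathfrak t_\varepsilon(\xi)\ge c\min\{|\xi|^2,|\xi|/\varepsilon\}$, this also gives $\int|\xi||\widehat\varpi_a|^2\,d\xi\le C$, because $|\xi|\le|\xi|^2$ for $|\xi|\ge1$ and $|\xi|\le1$ for $|\xi|\le 1$. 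Hence $\{\varpi_a\}$ is bounded in $H^{\frac12}(\R^3;\C^2)$ with $\|\varpi_a\|_2=1$.

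For the lower bound, fix $R>0$ and split $\varpi_a=\varpi_a^{\le}+\varpi_a^{>}$ with $\widehat{\varpi_a^{\le}}=\mathbf 1_{\{|\xi|\le R\}}\widehat\varpi_a$. On $|\xi|\le R$ we have $\mathfrak t_{\varepsilon}(\xi)\ge \frac{|\xi|^2}{2m}(1-C\varepsilon^2R^2)$. On $|\xi|>R$, once $\varepsilon_aR\le1$, we have $\mathfrak t_{\varepsilon_a}(\xi)\ge c|\xi|^2$ for $|\xi|\le 1/\varepsilon_a$ and $\mathfrak t_{\varepsilon_a}(\xi)\ge c|\xi|/\varepsilon_a$ beyond. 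Consequently
\[
\|\varpi_a^{>}\|_{\dot H^{\frac12}}^2\le C\bigl(R^{-1}+\varepsilon_a\bigr)\mathcal T_{\varepsilon_a}(\varpi_a),
\]
so the high-frequency part tends to $0$ in $H^{\frac12}$ uniformly as $R\to\infty$, $a\to0$. The low-frequency part $\varpi_a^{\le}$ lies in $H^1$, with
\[
\frac1{2m}\|\nabla\varpi_a^{\le}\|_2^2\le(1+o(1))\,\mathcal T_{\varepsilon_a}(\varpi_a).
\]
By the continuity of $\mathcal A_q$ in $H^{\frac12}$ (Lemma~\ref{lem:Ar-C2}), we get
\[
\frac12\mathcal T_{\varepsilon_a}(\varpi_a)-\frac1q\mathcal A_q(\varpi_a)\ge \frac1{4m}\|\nabla\varpi_a^{\le}\|_2^2-\frac1q\mathcal A_q(\varpi_a^{\le})-\eta(R,a),
\]
with $\eta\to0$. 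Normalizing $\varpi_a^{\le}$ to unit mass changes the energy by $o(1)$, since $\|\varpi_a^{>}\|_2\to0$ uniformly as well. This gives $\liminf\ge e_q$, and together with \eqref{eq:energy-limsup} we obtain the first limit in \eqref{eq:rescaled-energy-limit}.

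For the distance, take $a_n\to0$ and choose $R_n\to\infty$ slowly, so that $\varepsilon_{a_n}R_n\to0$ and the errors above vanish. Then $Q_n:=\varpi_{a_n}^{\le}/\|\varpi_{a_n}^{\le}\|_2\in\mathcal S_q$ is a minimizing sequence for $e_q$, and $\|\varpi_{a_n}-Q_n\|_{H^{\frac12}}\to0$. The argument of Lemma~\ref{lem:NLS-ground-set} shows that minimizing sequences are relatively compact in $H^1$, with no translations needed thanks to the weight, since the weak limit is nonzero because $e_q<0$ and the weighted embedding is compact. Thus a subsequence of $Q_n$ converges to some $Q\in\mathcal G_q$ in $H^1$, hence $\varpi_{a_n}\to Q$ in $H^{\frac12}$. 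Since every sequence has such a subsequence, $\operatorname{dist}_{H^{\frac12}}(\varpi_a,\mathcal G_q)\to0$.

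For \eqref{eq:pseudo-kinetic-limit}, suppose $\varpi_{a_n}\to Q$ in $H^{\frac12}$. Then $\mathcal A_q(\varpi_{a_n})\to\mathcal A_q(Q)$, and by \eqref{eq:rescaled-energy-identity} together with the energy limit, $\frac12\mathcal T_{\varepsilon_{a_n}}(\varpi_{a_n})\to e_q+\frac1q\mathcal A_q(Q)=\frac1{4m}\|\nabla Q\|_2^2$.

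The main obstacle is the lower bound. We have no $H^1$ bound on $\varpi_a$, so the kinetic lower semicontinuity must come from the frequency splitting. The delicate points there are the uniform high-frequency estimate and the control of the cross terms in $\mathcal A_q$. I would handle the cross terms through the $H^{\frac12}$-Lipschitz estimate for $\mathcal A_q$ on bounded sets from Lemma~\ref{lem:Ar-C2}.
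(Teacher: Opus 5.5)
Your argument is correct, but it is organized differently from the paper's proof.

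\textbf{The paper's route.} The paper passes directly to a weak $H^{\frac12}$ limit $Q$ of $\varpi_{a_n}$. It obtains $\liminf\mathcal T_{\varepsilon_{a_n}}(\varpi_{a_n})\ge\frac1{2m}\|\nabla Q\|_2^2$ from weak lower semicontinuity on balls $B_R$, where $\sqrt{\mathfrak t_\varepsilon}\to|\xi|/\sqrt{2m}$ uniformly. It then reruns the mass-splitting argument of Lemma~\ref{lem:NLS-ground-set} on this particular $Q$ to get $\|Q\|_2=1$ and $Q\in\mathcal G_q$. Only at the end does it upgrade to strong $H^{\frac12}$ convergence, using the tail bound $|\xi|\le(\varepsilon+\frac{2m}{R})\mathfrak t_\varepsilon(\xi)$ on $|\xi|\ge R$.

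\textbf{Your route.} You use essentially the same tail bound first, with a frequency cutoff at a slowly growing radius $R_n$ (so that $\varepsilon_{a_n}R_n\to0$). This produces genuine $H^1$ competitors $Q_n\in\mathcal S_q$ that are $H^{\frac12}$-close to $\varpi_{a_n}$ and form a minimizing sequence for $e_q$. Compactness is then imported wholesale from the minimizing-sequence argument of Lemma~\ref{lem:NLS-ground-set}. As a bonus, the lower bound $\liminf\ge e_q$ holds for the whole family, without subsequences. Your $H^{\frac12}$ bound via the pointwise estimate $\mathfrak t_\varepsilon(\xi)\ge c\min\{|\xi|^2,|\xi|/\varepsilon\}$ is a valid substitute for the Cauchy--Schwarz bound \eqref{eq:FW-D-control}.

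\textbf{Comparison.} Your approach is more modular: the only new input is Fourier bookkeeping, and the limit problem's compactness is used as a black box. The cost is some extra bookkeeping:
\begin{itemize}
\item the diagonal choice of $R_n$;
\item the normalization of the low-frequency part;
\item Lipschitz control of $\mathcal A_q$ on $H^{\frac12}$-bounded sets of $\C^2$-valued functions.
\end{itemize}
The last point follows from H\"older's inequality and Lemma~\ref{lem:weighted}, not literally from Lemma~\ref{lem:Ar-C2}, which is stated on $E$. The paper's approach avoids cutoffs altogether and gets $Q\in H^1$ directly from the liminf inequality. The final step giving \eqref{eq:pseudo-kinetic-limit} is identical in both.
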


\begin{proof}
From \eqref{eq:sharp-bounds} and the identity $K_a=a\varepsilon_a^2\mathcal T_{\varepsilon_a}(\varpi_a)$, we have 
$\mathcal T_{\varepsilon_a}(\varpi_a)=O(1)$. Applying \eqref{eq:FW-D-control} to $\omega_a$ and using \eqref{eq:varpi-rescaling}, we obtain
\[
\left(\int_{\R^3}|\xi|\,|\widehat\varpi_a|^2\,d\xi\right)^2\le\mathcal T_{\varepsilon_a}(\varpi_a)
\left(\varepsilon_a^2\mathcal T_{\varepsilon_a}(\varpi_a)+2m\right).
\]
Together with $\norm{\varpi_a}_2=1$, this proves that $\{\varpi_a\}$ is bounded in $H^{\frac12}(\R^3;\C^2)$.

Let $a_n\to0^+$ be arbitrary. After passing to a subsequence, we may assume that

\[
\varpi_{a_n}\rightharpoonup Q
\quad\text{in }H^{\frac12}(\R^3;\C^2),
\quad\varpi_{a_n}\rightarrow Q
\quad\text{in }L^q(\R^3,|x|^{-b}\,dx;\C^2),
\]
where the strong convergence follows from $q<3-b$ and Lemma~\ref{lem:weighted}.

For each fixed $R>0$,
\[
\sqrt{\mathfrak t_{\varepsilon_{a_n}}(\xi)}\rightarrow\frac{|\xi|}{\sqrt{2m}}\quad\text{uniformly on }B_R.
\]
Since $\widehat\varpi_{a_n}\rightharpoonup\widehat Q$
in $L^2(B_R;\C^2)$, we have
\[
\sqrt{\mathfrak t_{\varepsilon_{a_n}}}\,\widehat\varpi_{a_n}
\rightharpoonup \frac{|\xi|}{\sqrt{2m}}\widehat Q
\quad\text{in }L^2(B_R;\C^2).
\]
By weak lower semicontinuity,
\[
\liminf_{n\to\infty}\mathcal T_{\varepsilon_{a_n}}(\varpi_{a_n})\ge\frac1{2m}\int_{B_R}|\xi|^2\,|\widehat Q(\xi)|^2\,d\xi.
\]
Letting $R\to\infty$ and applying the monotone convergence theorem,  we conclude that
\begin{equation}\label{eq:pseudo-liminf}
\liminf_{n\to\infty}\mathcal T_{\varepsilon_{a_n}}(\varpi_{a_n})\ge\frac1{2m}\norm{\nabla Q}_2^2.
\end{equation}
In particular, $Q\in H^1(\R^3;\C^2)$.

Using weak lower semicontinuity, we also have
$\norm{Q}_2^2\le1$.
Combining \eqref{eq:rescaled-energy-identity},
\eqref{eq:pseudo-liminf}, and \eqref{eq:energy-limsup}
with the strong convergence $\varpi_{a_n}\to Q$ in
$L^q(\R^3,|x|^{-b}\,dx;\C^2)$, we obtain
\[
\mathscr E_q(Q)\le\liminf_{n\to\infty}\frac{e_{a_n}^\delta-\frac{ma_n}{2}}{a_n\varepsilon_{a_n}^2}
\le\limsup_{n\to\infty}\frac{e_{a_n}^\delta-\frac{ma_n}{2}}{a_n\varepsilon_{a_n}^2}\le e_q.
\]
Since $e_q<0$, we must have $Q\ne0$. If $0<\norm{Q}_2^2<1$, we apply the normalization argument in the proof of Lemma~\ref{lem:NLS-ground-set} to $Q$ and deduce that $\mathscr E_q(Q)>e_q$, contrary to $\mathscr E_q(Q)\le e_q$. Hence $\norm{Q}_2=1$. By the definition of $e_q$, we have $e_q\le\mathscr E_q(Q)\le e_q$. Thus $\mathscr E_q(Q)=e_q$ and $Q\in\mathcal G_q$. In particular,
\[
\frac{e_{a_n}^\delta-\frac{ma_n}{2}}{a_n\varepsilon_{a_n}^2}\rightarrow e_q,
\]
and, since $\varpi_{a_n}\rightharpoonup Q$ in $L^2(\R^3;\C^2)$ and $\norm{\varpi_{a_n}}_2=\norm{Q}_2=1$, we obtain
\[
\varpi_{a_n}\rightarrow Q\quad\text{in }L^2(\R^3;\C^2).
\]

To prove that $\varpi_{a_n}\to Q$ strongly in $H^{\frac12}(\R^3;\C^2)$, it remains to show that
\[
\int_{\R^3}|\xi|\,\bigl|\widehat{\varpi}_{a_n}(\xi)-\widehat Q(\xi)\bigr|^2\,d\xi\rightarrow0.
\]
For this purpose, we use the following identity and estimate:
\[
\mathfrak t_\varepsilon(\xi)=\frac{|\xi|^2}{\sqrt{m^2+\varepsilon^2|\xi|^2}+m},\quad
|\xi|\le\left(\varepsilon+\frac{2m}{R}\right)\mathfrak t_\varepsilon(\xi)\quad\text{for }|\xi|\ge R.
\]
Here the inequality follows from $\sqrt{m^2+\varepsilon^2|\xi|^2}\le m+\varepsilon|\xi|$.
Let $C_1=\sup\limits_n\mathcal T_{\varepsilon_{a_n}}(\varpi_{a_n})<\infty$.
Using the above bound for $|\xi|$ and $\varepsilon_{a_n}\to0^+$,
we obtain
\[
\limsup_{n\to\infty}\int_{|\xi|\ge R}|\xi|\,|\widehat\varpi_{a_n}(\xi)|^2\,d\xi\le \frac{2mC_1}{R}.
\]
For fixed $R>0$,  since $\varpi_{a_n}\to Q$ in
$L^2(\R^3;\C^2)$, we have
\[
\int_{B_R}|\xi|\,|\widehat\varpi_{a_n}-\widehat Q|^2\,d\xi\le R\norm{\varpi_{a_n}-Q}_2^2\rightarrow0.
\]
On the other hand,
\[
\int_{|\xi|\ge R}|\xi|\,|\widehat\varpi_{a_n}-\widehat Q|^2\,d\xi\le 2\int_{|\xi|\ge R}|\xi|\,|\widehat\varpi_{a_n}|^2\,d\xi
+2\int_{|\xi|\ge R}|\xi|\,|\widehat Q|^2\,d\xi.
\]
Combining the estimates on $B_R$ and on $\{\xi\in\R^3:|\xi|\ge R\}$,
we first take the upper limit as $n\to\infty$
and then let $R\to\infty$.
Since $Q\in H^1(\R^3;\C^2)$, we conclude that
\[
\int_{\R^3}|\xi|\,|\widehat\varpi_{a_n}-\widehat Q|^2\,d\xi\rightarrow0.
\]
Together with $\varpi_{a_n}\to Q$ in
$L^2(\R^3;\C^2)$, this proves that
\[
\varpi_{a_n}\rightarrow Q\quad\text{in }H^{\frac12}(\R^3;\C^2).
\]
Since the sequence $a_n\to0^+$ was arbitrary, both limits in
\eqref{eq:rescaled-energy-limit} follow.

Finally, assume that $a_n\to0^+$ and $\varpi_{a_n}\to Q$ strongly in
$H^{\frac12}(\R^3;\C^2)$ for some $Q\in\mathcal G_q$.
By Lemma~\ref{lem:weighted}, we have
$\mathcal A_q(\varpi_{a_n})\to\mathcal A_q(Q)$.
Combining the first limit in \eqref{eq:rescaled-energy-limit}
with \eqref{eq:rescaled-energy-identity}, we obtain
\[
\frac12\mathcal T_{\varepsilon_{a_n}}(\varpi_{a_n})\rightarrow e_q+\frac1q\mathcal A_q(Q)=\frac1{4m}\norm{\nabla Q}_2^2.
\]
This proves \eqref{eq:pseudo-kinetic-limit}.
\end{proof}

\begin{proof}[Completion of the proof of Theorem~\ref{thm:small-mass}]
The stated properties of $e_q$ and $\mathcal G_q$, including the limiting equation, follow from Lemma~\ref{lem:NLS-ground-set}. Using $I(u_a)=e_a^\delta$ and \eqref{eq:rescaled-energy-limit}, we obtain the first conclusion in \eqref{eq:main-energy-asymptotic}.

We now prove \eqref{eq:main-profile-convergence}. By \eqref{eq:limit-FW-coordinate}, \eqref{eq:normalized-dilation},
and \eqref{eq:varpi-rescaling}, we have
\begin{equation}\label{eq:rescaled-v-Fourier}
\widehat{\mathscr D_av_a}(\xi)=U(\varepsilon_a\xi)^{-1}\binom{\widehat\varpi_a(\xi)}{0}.
\end{equation}
We first show that
\[
\operatorname{dist}_{H^{\frac12}(\R^3;\C^4)}\left(\mathscr D_av_a,\mathcal G_q^{\mathrm{up}}\right)\rightarrow0.
\]
Fix $Q\in\mathcal G_q$. The matrices $U(\varepsilon_a\xi)^{-1}$ are unitary and converge pointwise to $I_4$ as $a\to0^+$. Applying the dominated convergence theorem, we obtain
\[
\mathcal F^{-1}\left[U(\varepsilon_a\xi)^{-1}\binom{\widehat Q(\xi)}{0}\right]\rightarrow\binom{Q}{0}\quad\text{in }H^{\frac12}(\R^3;\C^4).
\]
For every $f\in H^{\frac12}(\R^3;\C^4)$, it follows from the unitarity of $U(\varepsilon_a\xi)^{-1}$ that
\[
\left\|\mathcal F^{-1}\left[\bigl(U(\varepsilon_a\xi)^{-1}-I_4\bigr)\widehat f\right]\right\|_{H^{\frac12}}\le 2\norm{f}_{H^{\frac12}}.
\]
Using this bound and the compactness of $\mathcal G_q$ in $H^{\frac12}(\R^3;\C^2)$, we deduce by a finite covering argument that the convergence is uniform for $Q\in\mathcal G_q$. Therefore,
\[
\operatorname{dist}_{H^{\frac12}}\left(\mathscr D_av_a,\mathcal G_q^{\mathrm{up}}\right)
\le\operatorname{dist}_{H^{\frac12}}(\varpi_a,\mathcal G_q)+\sup_{Q\in\mathcal G_q}
\left\|\mathcal F^{-1}\left[\bigl(U(\varepsilon_a\xi)^{-1}-I_4\bigr)\binom{\widehat Q(\xi)}{0}\right]\right\|_{H^{\frac12}}.
\]
It follows from \eqref{eq:rescaled-energy-limit} and the uniform
convergence for $Q\in\mathcal G_q$ that both terms tend to zero.

By scaling and \eqref{eq:sharp-w-bounds},
\[
\norm{\mathscr D_aw_a}_2^2=\frac{\norm{w_a}_2^2}{a}=o(\varepsilon_a^2),
\]
\[
\int_{\R^3}|\xi||\widehat{\mathscr D_aw_a}(\xi)|^2\,d\xi=\frac1{a\varepsilon_a}
\int_{\R^3}|\xi||\widehat w_a(\xi)|^2\,d\xi\le\frac{\norm{w_a}_E^2}{a\varepsilon_a}=o(1).
\]
Thus $\mathscr D_aw_a\to0$ in $H^{\frac12}(\R^3;\C^4)$. By \eqref{eq:rescaled-v-Fourier} and unitarity,
$\norm{\mathscr D_av_a}_{H^{\frac12}}=\norm{\varpi_a}_{H^{\frac12}}=O(1)$. Since
\[
\mathscr D_au_a=t_a\mathscr D_av_a+\mathscr D_aw_a
\]
and $t_a\to1$, we have
\[
\operatorname{dist}_{H^{\frac12}}(\mathscr D_au_a,\mathcal G_q^{\mathrm{up}})\le \operatorname{dist}_{H^{\frac12}}
(\mathscr D_av_a,\mathcal G_q^{\mathrm{up}})+|t_a-1|\norm{\mathscr D_av_a}_{H^{\frac12}}
+\norm{\mathscr D_aw_a}_{H^{\frac12}}\rightarrow0.
\]
Since $\widetilde u_a=\mathscr D_au_a$, this proves \eqref{eq:main-profile-convergence}.

Let $a_n\to0^+$ be arbitrary. By Lemma~\ref{lem:rescaled-compactness}, after passing to a subsequence, there exists
$Q\in\mathcal G_q$ such that
\[
\varpi_{a_n}\rightarrow Q\quad\text{in }H^{\frac12}(\R^3;\C^2).
\]
For this $Q$, it follows from \eqref{eq:rescaled-v-Fourier}, \eqref{eq:sharp-w-bounds}, and the dominated convergence theorem that
\[
\widetilde u_{a_n}\rightarrow\binom{Q}{0}\quad\text{in }H^{\frac12}(\R^3;\C^4),
\]
which proves the subsequence statement in the theorem.

We next prove \eqref{eq:main-lower-spinor-limit}. Set
\[
\mathfrak M_a(\xi)=\frac{\rho_-(\varepsilon_a\xi)}{\varepsilon_a}S(\xi).
\]
Since $S(\varepsilon_a\xi)=S(\xi)$ and
\[
\lambda(\varepsilon_a\xi)-m=\frac{\varepsilon_a^2|\xi|^2}{\lambda(\varepsilon_a\xi)+m},
\]
we can write
\begin{equation}\label{eq:lower-multiplier-formula}
\mathfrak M_a(\xi)=\frac{\boldsymbol\sigma\cdot\xi}{\sqrt{2\lambda(\varepsilon_a\xi)
\bigl(\lambda(\varepsilon_a\xi)+m\bigr)}}.
\end{equation}
The lower component of $\varepsilon_a^{-1}\mathscr D_av_a$ has Fourier transform $\mathfrak M_a(\xi)\widehat\varpi_a(\xi)$. By $(\boldsymbol\sigma\cdot\xi)^*(\boldsymbol\sigma\cdot\xi)=|\xi|^2I_2$, we have
\[
\mathfrak M_a(\xi)^*\mathfrak M_a(\xi)=\frac{|\xi|^2}{2\lambda(\varepsilon_a\xi)
\bigl(\lambda(\varepsilon_a\xi)+m\bigr)}I_2=\frac{\mathfrak t_{\varepsilon_a}(\xi)}{2\lambda(\varepsilon_a\xi)}I_2.
\]
Consequently,
\begin{equation}\label{eq:lower-multiplier-norm}
\norm{\mathfrak M_a\widehat\varpi_a}_2^2
=\int_{\R^3}\frac{\mathfrak t_{\varepsilon_a}(\xi)}{2\lambda(\varepsilon_a\xi)}|\widehat\varpi_a(\xi)|^2\,d\xi.
\end{equation}
We first establish the limit of
$\norm{\mathfrak M_{a_n}\widehat\varpi_{a_n}}_2^2$.
For every fixed $R>0$,
\[
\frac{\mathfrak t_{\varepsilon_{a_n}}(\xi)}
{2\lambda(\varepsilon_{a_n}\xi)}
\rightarrow\frac{|\xi|^2}{4m^2}
\]
uniformly on $B_R$.
Combining this uniform convergence with
$\widehat\varpi_{a_n}\to\widehat Q$ in
$L^2(\R^3;\C^2)$, we deduce that
\[
\int_{B_R}
\frac{\mathfrak t_{\varepsilon_{a_n}}(\xi)}
{2\lambda(\varepsilon_{a_n}\xi)}
|\widehat\varpi_{a_n}(\xi)|^2\,d\xi
\rightarrow
\frac1{4m^2}\int_{B_R}
|\xi|^2|\widehat Q(\xi)|^2\,d\xi.
\]
Applying the same local argument to $\mathfrak t_{\varepsilon_{a_n}}$ on $B_R$ and subtracting the resulting limit from \eqref{eq:pseudo-kinetic-limit}, we obtain
\[
\int_{|\xi|>R}\mathfrak t_{\varepsilon_{a_n}}(\xi)|\widehat\varpi_{a_n}(\xi)|^2\,d\xi
\rightarrow\frac1{2m}\int_{|\xi|>R}|\xi|^2|\widehat Q(\xi)|^2\,d\xi.
\]
Since $\lambda(\varepsilon_{a_n}\xi)\ge m$, we have
\[
\limsup_{n\to\infty}\int_{|\xi|>R}\frac{\mathfrak t_{\varepsilon_{a_n}}(\xi)}{2\lambda(\varepsilon_{a_n}\xi)}
|\widehat\varpi_{a_n}(\xi)|^2\,d\xi\le\frac1{4m^2}\int_{|\xi|>R}|\xi|^2|\widehat Q(\xi)|^2\,d\xi\rightarrow0
\]
as $R\to\infty$. From the limit on $B_R$ and the estimate on $\{\xi\in\R^3:|\xi|>R\}$, we conclude that
\begin{equation}\label{eq:lower-multiplier-norm-limit}
\norm{\mathfrak M_{a_n}\widehat\varpi_{a_n}}_2^2\rightarrow\frac1{4m^2}\norm{\nabla Q}_2^2.
\end{equation}

Let $\Psi\in L^2(\R^3;\C^2)$ be compactly supported.
By \eqref{eq:lower-multiplier-formula}, we have
\[
\mathfrak M_{a_n}(\xi)
\longrightarrow\frac{\boldsymbol\sigma\cdot\xi}{2m}
\]
uniformly on $\operatorname{supp}\Psi$. Therefore
\[
\mathfrak M_{a_n}^*\Psi\rightarrow \frac{\boldsymbol\sigma\cdot\xi}{2m}\Psi
\quad\text{in }L^2(\R^3;\C^2).
\]
Using $\widehat\varpi_{a_n}\to\widehat Q$ in $L^2(\R^3;\C^2)$,
we obtain
\[
\ip{\mathfrak M_{a_n}\widehat\varpi_{a_n}}{\Psi}_2
=\ip{\widehat\varpi_{a_n}}{\mathfrak M_{a_n}^*\Psi}_2
\rightarrow\ip{\dfrac{\boldsymbol\sigma\cdot\xi}{2m}\widehat Q}{\Psi}_2.
\]

By \eqref{eq:lower-multiplier-norm-limit},
the sequence $\{\mathfrak M_{a_n}\widehat{\varpi}_{a_n}\}$
is bounded in $L^2(\R^3;\C^2)$.
Since compactly supported functions are dense in this space,
we deduce that
\[
\mathfrak M_{a_n}\widehat{\varpi}_{a_n}
\rightharpoonup
\frac{\boldsymbol\sigma\cdot\xi}{2m}\widehat Q
\quad\text{in }L^2(\R^3;\C^2).
\]
Moreover, using $(\boldsymbol\sigma\cdot\xi)^2=|\xi|^2I_2$ and Plancherel's theorem, we have
\[
\norm{\frac{\boldsymbol\sigma\cdot\xi}{2m}\widehat Q}_2^2
=
\frac{1}{4m^2}\norm{\nabla Q}_2^2.
\]
It follows from \eqref{eq:lower-multiplier-norm-limit} and the weak convergence in $L^2(\R^3;\C^2)$ that
\[
\mathfrak M_{a_n}\widehat{\varpi}_{a_n}
\rightarrow
\frac{\boldsymbol\sigma\cdot\xi}{2m}\widehat Q
\quad\text{strongly in }L^2(\R^3;\C^2).
\]
Write the rescaled positive spectral component in the standard $2+2$ form as
\[
\mathscr D_{a_n}v_{a_n}=\begin{pmatrix}
\varphi_{a_n}^{(v)}\\
\chi_{a_n}^{(v)}
\end{pmatrix},\quad
\varphi_{a_n}^{(v)},\chi_{a_n}^{(v)}\in L^2(\R^3;\C^2).
\]
Since the Fourier transform of
$\varepsilon_{a_n}^{-1}\chi_{a_n}^{(v)}$
is $\mathfrak M_{a_n}\widehat{\varpi}_{a_n}$,
we obtain by Plancherel's theorem
\begin{equation}\label{eq:lower-positive-limit}
\frac{\chi_{a_n}^{(v)}}{\varepsilon_{a_n}}\rightarrow-\frac{i}{2m}\boldsymbol\sigma\cdot\nabla Q
\quad\text{strongly in }L^2(\R^3;\C^2).
\end{equation}
Using $u_a-v_a=(t_a-1)v_a+w_a$,
$\norm{\mathscr D_av_a}_2=1$, and \eqref{eq:sharp-w-bounds},
we have
\[
\frac1{\varepsilon_a}\norm{\mathscr D_a(u_a-v_a)}_2\le\frac{|t_a-1|}{\varepsilon_a}
+\frac{\norm{w_a}_2}{\sqrt a\,\varepsilon_a}\rightarrow0.
\]
Together with \eqref{eq:lower-positive-limit}, this proves
\eqref{eq:main-lower-spinor-limit}.

By Lemma~\ref{lem:original-regular-branch}, the nonlinear term in the original equation belongs to $L^2(\R^3;\C^4)$. To determine the asymptotic behavior of $\mu_a$, we test the equation against $u_a$ and use \eqref{eq:limit-decomposition} to obtain
\begin{equation}\label{eq:multiplier-asymptotic-identity}
\mu_a a=t_a^2\norm{v_a}_E^2-\norm{w_a}_E^2-\mathcal A_q(u_a)+\mathcal A_p(u_a).
\end{equation}
In view of \eqref{eq:main-profile-convergence}, choose $Q_a\in\mathcal G_q$ such that
\[
\widetilde u_a-\begin{pmatrix}
Q_a\\
0
\end{pmatrix}\rightarrow0\quad\text{in }H^{\frac12}(\R^3;\C^4).
\]
From \eqref{eq:main-rescaling} and \eqref{eq:epsilon-balance}, we have 
$\frac{\mathcal A_q(u_a)}{a\varepsilon_a^2}=\mathcal A_q(\widetilde u_a)$. It follows from Lemma~\ref{lem:weighted}, \eqref{eq:NLS-Pohozaev}, and $\mathscr E_q(Q_a)=e_q$ that
\[
\mathcal A_q(\widetilde u_a)=\mathcal A_q(Q_a)+o(1)
=-\frac{2q}{2-\gamma_q}e_q+o(1).
\]
Using \eqref{eq:Ap-by-Aq} and Theorem~\ref{thm:main}, we have
\[
0\le\frac{\mathcal A_p(u_a)}{a\varepsilon_a^2}\le\norm{u_a}_\infty^{p-q}\frac{\mathcal A_q(u_a)}{a\varepsilon_a^2}
\le C_p a^{\frac{p-q}{2}}\frac{\mathcal A_q(u_a)}{a\varepsilon_a^2}\rightarrow0.
\]
Along the subsequence for which $\varpi_{a_n}\to Q$
in $H^{\frac12}(\R^3;\C^2)$, we obtain from
\eqref{eq:pseudo-kinetic-limit} that
\[
\frac{K_{a_n}}{a_n\varepsilon_{a_n}^2}=\mathcal T_{\varepsilon_{a_n}}(\varpi_{a_n})\rightarrow\frac1{2m}\norm{\nabla Q}_2^2.
\]
Moreover, $\norm{v_a}_E^2=ma+K_a=O(a)$. By \eqref{eq:sharp-w-bounds}, we have $t_a^2-1=o(\varepsilon_a^2)$ and $\norm{w_a}_E^2=o(a\varepsilon_a^2)$. Hence
\[
t_a^2\norm{v_a}_E^2-\norm{w_a}_E^2=\norm{v_a}_E^2+(t_a^2-1)\norm{v_a}_E^2-\norm{w_a}_E^2=ma+K_a+o(a\varepsilon_a^2).
\]
From \eqref{eq:multiplier-asymptotic-identity}, we then deduce
\[
\frac{m-\mu_{a_n}}{\varepsilon_{a_n}^2}
=\frac{\mathcal A_q(u_{a_n})-\mathcal A_p(u_{a_n})-K_{a_n}}
{a_n\varepsilon_{a_n}^2}+o(1)\rightarrow \mathcal A_q(Q)-\frac1{2m}\norm{\nabla Q}_2^2
=\Lambda_q,
\]
where the last equality follows by testing \eqref{eq:NLS-limit-introduction} against $Q$. Since every sequence $a_n\to0^+$ admits a subsequence along which $\frac{m-\mu_{a_n}}{\varepsilon_{a_n}^2}\to\Lambda_q$, the second conclusion in \eqref{eq:main-energy-asymptotic} follows.

Since $u_a^-=w_a$, we obtain from \eqref{eq:sharp-w-bounds} that
\[
\frac{\norm{u_a^-}_E}{\sqrt a\,\varepsilon_a}
\rightarrow0.
\]
Along any subsequence for which \eqref{eq:main-lower-spinor-limit} holds, we also have
\[
\frac{\|\chi_{a_n}\|_2^2}
     {a_n\varepsilon_{a_n}^2}
\rightarrow
\frac{1}{4m^2}\|\boldsymbol\sigma\cdot\nabla Q\|_2^2
=
\frac{1}{4m^2}\|\nabla Q\|_2^2
=
-\frac{\gamma_q}{m(2-\gamma_q)}e_q,
\]
where the last equality follows from \eqref{eq:NLS-Pohozaev} and $\mathscr E_q(Q)=e_q$. The limit is independent of $Q\in\mathcal G_q$, and
every sequence $a_n\to0^+$ admits such a subsequence. Therefore,
\[
\frac{\norm{\chi_a}_2^2}{a\varepsilon_a^2}\rightarrow-\frac{\gamma_q}{m(2-\gamma_q)}e_q\quad\text{as }a\to0.
\]

Finally, the first limit in \eqref{eq:main-unscaled-limit} follows directly from Theorem~\ref{thm:main}. Set $z_a=\frac{u_a}{\sqrt a}$. We prove that $z_a\rightharpoonup0$ in $H^1(\R^3;\C^4)$ as $a\to0^+$.
Since the truncation is inactive, we obtain from
\eqref{eq:nonlinear-L2-bound} that
\[
\left\||x|^{-b}\bigl(|u_a|^{q-2}-|u_a|^{p-2}\bigr)u_a\right\|_2\le C a^{\frac{q-1}{2}}.
\]
Consequently,
\[
\norm{(H_0-\mu_a)z_a}_2\le C a^{\frac{q-2}{2}}\rightarrow0.
\]
Thus $H_0z_a=\mu_a z_a+o(1)$ in $L^2(\R^3;\C^4)$. Since $\norm{z_a}_2=1$,
\[
\left|\norm{H_0z_a}_2-\mu_a\right|\le\norm{(H_0-\mu_a)z_a}_2\rightarrow0.
\]
Using $\mu_a\to m$ and the identity
\[
\norm{H_0z_a}_2^2=\norm{\nabla z_a}_2^2+m^2\norm{z_a}_2^2,
\]
we deduce that $\norm{\nabla z_a}_2\to0^+$.

By Theorem~\ref{thm:main}, the family $\{z_a\}$
is bounded in $H^1(\R^3;\C^4)$.
If $a_n\to0^+$ and $z_{a_n}\rightharpoonup z$ in
$H^1(\R^3;\C^4)$, then $\nabla z=0$.
Thus $z$ is constant, and since $z\in L^2(\R^3;\C^4)$,
we have $z=0$.
Therefore $z_a\rightharpoonup0$ in $H^1(\R^3;\C^4)$
as $a\to0^+$.
This completes the proof.
\end{proof}

\section*{Statements and Declarations}

\subsection*{Funding}
This work is partially supported by the NSFC (12471102), NSF of Jilin Province (20250102004JC), and Research Project of the Education Department of Jilin Province (JJKH20250296KJ).

\subsection*{Competing interests}
The authors declare that they have no conflict of interest.

\subsection*{Data availability}
Data sharing is not applicable to this article as no datasets were generated or analysed during the current study.

\end{document}